\numberwithin{equation}{section}
\theoremstyle{definition}
\newtheorem{Def}{Definition}[section]
\theoremstyle{remark}
\newtheorem{Exa}[Def]{Example}
\newtheorem{Rem}[Def]{Remark}
\theoremstyle{plain}
\newtheorem{Prop}[Def]{Proposition}
\newtheorem{Cor}[Def]{Corollary}
\newtheorem{Thm}[Def]{Theorem}
\newtheorem{Lem}[Def]{Lemma}
\newcommand{\dfn}{\mathrel{\dot{=}}}
\newcommand{\st}{ \ ; \ }
\newcommand{\rarr}{\rightarrow}
\newcommand{\sset}{\subset}
\newcommand{\Z}{\mathbb{Z}}
\newcommand{\N}{\mathbb{N}}
\newcommand{\R}{\mathbb{R}}
\newcommand{\C}{\mathbb{C}}
\newcommand{\TR}[5]{\begin{array}{c c c c c}
    {#1} & : & {#3} & \longrightarrow & {#5}\\
    & & {#2} & \longmapsto & {#4}
  \end{array}
}
\newcommand{\transp}[1]{\prescript{\mathrm{t}}{}{{#1}}}
\DeclareMathOperator{\ran}{\mathrm{ran}}
\newcommand{\del}{\partial}
\newcommand{\dd}{\mathrm{d}}
\newcommand{\D}{\mathscr{D}}
\newcommand{\cinfty}{\mathscr{C}^\infty}
\newcommand{\gev}{\mathscr{G}}
\newcommand{\sob}{\mathscr{H}}
\DeclareMathOperator{\T}{T}
\newcommand{\MM}{\mathrm{M}}
\newcommand{\LL}{\mathrm{L}}
\newcommand{\VV}{\mathcal{V}}
\newcommand{\psum}{\sideset{}{'}\sum}
\newcommand{\hsum}[1]{\underset{#1}{\widehat{\bigoplus}}}
\DeclareMathOperator{\End}{\mathrm{End}}
\newcommand{\gr}[1]{\mathfrak{#1}}
\DeclareMathOperator{\ad}{\mathrm{ad}}
\newcommand{\vv}[1]{\mathrm{#1}}
\author{Gabriel Ara\'{u}jo}
\address{University of S{\~a}o Paulo, ICMC-USP, S{\~a}o Carlos, SP, Brazil}
\email{\texttt{gccsa@icmc.usp.br}}
\thanks{This work was partially supported by Conselho Nacional de Desenvolvimento Cient{\'i}fico e Tecnol{\'o}gico (CNPq, grant~140838/2012-0) and the S{\~a}o Paulo Research Foundation (FAPESP, grant~2018/12273-5).}
\keywords{Left-invariant operators, solvability, differential complexes, locally integrable structures.} 
\subjclass[2010]{35R03, 35A01, 58J10.}
\title[Left-invariant systems on Lie groups]{Global regularity and solvability of left-invariant differential systems on compact Lie groups}
\begin{document}

\begin{abstract} We are interested in global properties of systems of left-invariant differential operators on compact Lie groups: regularity properties, properties on the closedness of the range and finite dimensionality of their cohomology spaces, when acting on various function spaces e.g.~smooth, analytic and Gevrey. Extending the methods of Greenfield and Wallach~\cite{gw73} to systems, we obtain abstract characterizations for these properties and use them to derive some generalizations of results due to Greenfield~\cite{greenfield72}, Greenfield and Wallach~\cite{gw72}, as well as global versions of a result of Caetano and Cordaro~\cite{cc11} for involutive structures.
\end{abstract}

\maketitle


\section{Introduction}
Linear PDOs can act on various function spaces, provided their coefficients are sufficiently regular: smooth, real-analytic and/or Gevrey spaces, as well as their generalized counterparts, just to name a few. It is then of interest to investigate properties of regularity and solvability (either local or global; several flavors of hypoellipticity; properties of the associated cohomology spaces for systems; and so on) of such PDOs in some of these spaces, sometimes providing radically different answers depending on the spaces and the operators under study.

Some results in the literature, however, establish relationships among such properties for whole classes of operators. Greenfield~\cite{greenfield72} (see also his work with Wallach~\cite{gw72}) proved that for operators with constant coefficients on a torus global hypoellipticity implies global analytic-hypoellipticity. Dealing with differential complexes associated to locally integrable structures, Caetano and Cordaro~\cite{cc11} proved that if in a given degree the complex is locally solvable in the smooth setup then it is also locally solvable in the Gevrey setup (same degree), while Ragognette~\cite{ragognette18}, using similar methods, relates these with local solvability in the sense of Gevrey ultradistributions. Still dealing with locally integrable structures, Malaspina and Nicola~\cite{mn14} conjectured another connection between smooth and Gevrey local solvability (a kind of converse to the result of Caetano and Cordaro), which remains open except for a few positive cases. Schapira~\cite{schapira69} and Suzuki~\cite{suzuki72} dealt with solvability in spaces of real-analytic functions and hyperfunctions, from the so-called semi-local viewpoint, and its relationship with condition~$(\mathcal{P})$ of Nirenberg-Treves; this in turn recently inspired Cordaro and the author to investigate similar questions in the framework of locally integrable structures.

In the present work our point of view is global, and we deal with systems of left-invariant differential operators on compact Lie groups (see below), which we now explain.

Let $G$ be a Lie group. The identity element of $G$ will be denoted by $e$, and by $L_x: G \rarr G$ we denote the left translation by $x \in G$: it is a diffeomorphism of $G$ onto itself, and a (real or complex) vector field $\vv{X}$ on $G$ is said to be \emph{left-invariant} if $(L_x)_* \vv{X} = \vv{X}$ for every $x \in G$. This notion extends naturally to any class of globally defined ``objects'' on $G$ that can be pulled back or pushed forward by diffeomorphisms. Lie groups are automatically real-analytic manifolds, and left-invariant vector fields are automatically real-analytic. We denote by $\gr{g}$ the Lie algebra of all \emph{real} vector fields on $G$ that are left-invariant, and by $\C \gr{g}$ its complexification.

A vector subbundle $\VV \sset \C T G$ is called \emph{left-invariant} if
\begin{align*}
  (L_x)_* \VV_y &\sset \VV_{xy}, \quad \forall x,y \in G.
\end{align*}
It is easy to show that the inclusion above cannot be proper; thus left-invariant subbundles of $\C T G$ are in one-to-one correspondence with linear subspaces $\VV_e$ of $\C T_eG$ and hence are automatically real-analytic. If one considers $\C T_eG$ as isomorphic to $\C \gr{g}$ (by means of the map ``evaluation at $e$'') then it is immediate that $\VV$ is involutive -- i.e.~the Lie bracket of any two smooth sections of $\VV$ is again a smooth section of $\VV$ --  if and only if $\VV_e$ corresponds to a complex Lie subalgebra $\gr{v}$ of $\C \gr{g}$ (which is precisely the space of all left-invariant sections of $\VV$). So we might just as well recast the definition above as follows: a left-invariant involutive structure on $G$ is simply a complex Lie subalgebra of $\C \gr{g}$.

Aside from systems of vector fields with constant coefficients on tori, one should mention as an important (although simple) example of such structures the CR structure on the $3$-sphere inherited from its natural embedding into $\C^2$: it is well-known that $\mathbb{S}^3$ can be made into a (non-commutative) Lie group by identifying it with $\mathrm{SU}(2)$ via the diffeomorphism
\begin{align}
  (z_1, z_2) &\longmapsto \left(
    \begin{array}{c c}
      z_1 & -\bar{z}_2 \\
      z_2 & \bar{z}_1
    \end{array}
    \right). \label{eq:id_sphere}
\end{align}
The CR structure on $\mathbb{S}^3$ is then spanned by the single vector field
\begin{align*}
  -z_2 \frac{\del}{\del \bar{z}_1} + z_1 \frac{\del}{\del \bar{z}_2}
\end{align*}
which one checks by hand to be left-invariant; indeed, it corresponds, via the identification~\eqref{eq:id_sphere}, to
\begin{align*}
  \frac{1}{2} \left(
    \begin{array}{c c}
      0 & -1 \\
      1 & 0
    \end{array}
    \right) + \frac{i}{2} \left(
    \begin{array}{c c}
      0 & i \\
      i & 0
    \end{array}
    \right) \in \C \gr{su}(2).
\end{align*}
    
Next we recall what are the differential complexes naturally associated with $\VV$ (cf.~\cite[Section~I.6]{treves_has}): we define a vector subbundle $\T' \sset \C T^* G$ by specifying its fiber at $x \in G$, which is
\begin{align*}
  \T'_x &\dfn \{ \xi \in \C T^*_x G \st \langle \xi, v \rangle  = 0, \ \forall v \in \VV_x \}.
\end{align*}
For $p, q \in \Z_+$ let $\T'^{p,q}$ be the vector subbundle of $\wedge^{p + q} \C T^* G$ whose fiber at $x \in G$ consists of linear combinations of exterior products $\theta_1 \wedge \cdots \wedge \theta_{p + q}$, where $\theta_j \in \C T^*_x G$ for $1 \leq j \leq p + q$ and at least $p$ of these factors belong to $\T'_x$. Thus we have defined a family of vector bundles over $G$ such that:
\begin{enumerate}
\item $\T'^{p + 1, q - 1}$ is a vector subbundle of $\T'^{p,q}$; and
\item the exterior derivative $\dd$ of $G$ maps sections of $\T'^{p,q}$ to sections of $\T'^{p, q + 1}$.
\end{enumerate}
This allows us to define $\Lambda^{p,q} \dfn \T'^{p,q} / \T'^{p + 1,q - 1}$, which is again a real-analytic vector bundle over $G$ and $\dd$ induces a family of first-order real-analytic differential operators $\dd': \Lambda^{p,q} \rarr \Lambda^{p, q + 1}$ which forms, for each given $p \in \Z_+$, a differential complex of vector bundles and differential operators: in particular, $\dd'$ defines a linear map in the space of smooth sections of $\Lambda^{p,q}$, that is,
\begin{align*}
  \dd': \cinfty(G; \Lambda^{p,q}) \longrightarrow \cinfty(G; \Lambda^{p,q + 1}) 
\end{align*}
thus producing the \emph{cohomology spaces of $\VV$ with smooth coefficients}, here denoted by\footnote{This should not be confused with sheaf cohomology since we are not assuming local exactness of $\dd'$.} $H^{p,q}_{\mathcal{V}}(G; \cinfty(G))$. One can also work in the setting of Schwartz distributions
\begin{align*}
  \dd': \D'(G; \Lambda^{p,q}) \longrightarrow \D'(G; \Lambda^{p,q + 1}) 
\end{align*}
which produces the \emph{cohomology spaces of $\VV$ with distribution coefficients}, now denoted by $H^{p,q}_{\mathcal{V}}(G; \D'(G))$. We can further define other flavors of cohomology by allowing the coefficients to live in other interesting spaces of (generalized) functions; the corresponding notation is implied.

One easily checks, cf.~\cite[Chapter~5]{wallach_haohs}, that each $\Lambda^{p,q}$ is a homogeneous vector bundle and $\dd'$ is a homogeneous differential operator. We shall not make explicit use of these facts: although some of our results are very much related to (and could, in principle, even be derived from) the ones in Wallach's book, we follow a slightly different, simpler path (in particular avoiding the introduction of Representation Theory of Lie groups when it is unnecessary), which we now explain.

We are going to give these objects a more concrete representation. Let $\vv{L}_1, \ldots, \vv{L}_n$ be a basis of $\gr{v}$, the underlying Lie algebra of $\VV$: we regard these as a global frame for $\VV$ (hence a partial frame for $\C T G$) formed by left-invariant complex vector fields. We can adjoin to them left-invariant complex vector fields $\vv{M}_1, \ldots, \vv{M}_m$, in a non-canonical way, such that 
\begin{align}	
  \vv{L}_1, \ldots, \vv{L}_n, \vv{M}_1, \ldots, \vv{M}_m \ \text{is a basis for $\C \gr{g}$} \label{eq:canonical_frame}
\end{align}
(hence $m + n = N$, the dimension of $G$), whose dual basis we denote by $\tau_1, \ldots, \tau_n, \zeta_1, \ldots, \zeta_m \in \C \gr{g}^*$. We regard the latter as left-invariant, complex $1$-forms that form a global frame for $\C T^* G$, dual to $\vv{L}_1, \ldots, \vv{L}_n, \vv{M}_1, \ldots, \vv{M}_m$. This implies, for instance, that if $f \in \cinfty(G)$ then we can write
\begin{align*}
  \dd f &= \sum_{j = 1}^n \vv{L}_j f \ \tau_j + \sum_{k = 1}^m \vv{M}_k f \ \zeta_k.
\end{align*}
More generally, for $p,q \in \Z_+$ we define
\begin{align*}
  \tau_J &\dfn \tau_{j_1} \wedge \cdots \wedge \tau_{j_q}, \ \text{if $J = (j_1, \ldots, j_q)$} \\
  \zeta_I &\dfn \zeta_{i_1} \wedge \cdots \wedge \zeta_{i_p}, \ \text {if $I = (i_1, \ldots, i_p)$}
\end{align*}
so the set $\{ \zeta_I \wedge \tau_J \st \text{$I,J$ ordered, with $|I| + |J| = r$} \}$ is a frame for $\wedge^r \C T^* G$, or a basis for $ \wedge^r \C \gr{g}^*$. Thanks to these facts, any $u \in \cinfty(G; \Lambda^{p,q})$ can be represented as
\begin{align}
  u &= \psum_{|I| = p} \psum_{|J| = q} u_{IJ} \ \zeta_I \wedge \tau_J \label{eq:urepglobal}
\end{align}
where a ``primed'' sum means that we are summing only over \emph{ordered} multi-indices, rendering the representation above unique, and we then have isomorphisms
\begin{align}
  \cinfty(G; \Lambda^{p,q}) &\cong \left\{ \psum_{|I| = p} \psum_{|J| = q} u_{IJ} \ \zeta_I \wedge \tau_J \st u_{IJ} \in \cinfty(G) \right\}. \label{eq:isom_dprime1}
\end{align}
In that sense we regard these as spaces of vector-valued functions on $G$, global coordinates provided by the bases of (classes of) left-invariant forms
\begin{align}
  \{ \zeta_I \wedge \tau_J \st \text{$I,J$ ordered, with $|I| = p$ and $|J| = q$} \} \label{eq:bases_pq}
\end{align}
for $p, q \in \Z_+$. Moreover, we can represent the operator $\dd': \cinfty(G; \Lambda^{p,q}) \rarr \cinfty(G; \Lambda^{p,q + 1})$ under these isomorphisms: if $u \in \cinfty(G; \Lambda^{p,q})$ is as in~\eqref{eq:urepglobal} then
\begin{align*}
  \dd u &= \psum_{|I| = p} \psum_{|J| = q} \dd \left( u_{IJ} \ \zeta_I \wedge \tau_J \right) \\
  &= \psum_{|I| = p} \psum_{|J| = q} \dd u_{IJ} \wedge \zeta_I \wedge \tau_J - \psum_{|I| = p} \psum_{|J| = q} u_{IJ} \ \dd \left(\zeta_I \wedge \tau_J \right) \\
  &= \sum_{j = 1}^n \psum_{|I| = p} \psum_{|J| = q} \vv{L}_j u_{IJ} \ \tau_j \wedge \zeta_I \wedge \tau_J + \sum_{k = 1}^m \psum_{|I| = p} \psum_{|J| = q} \vv{M}_k u_{IJ} \ \zeta_k \wedge \zeta_I \wedge \tau_J - \psum_{|I| = p} \psum_{|J| = q} u_{IJ} \ \dd \left(\zeta_I \wedge \tau_J \right),
\end{align*}
and the second term in the last line above does not ``survive'' in the quotient $\T'^{p,q + 1} / \T'^{p + 1,q}$:
\begin{align*}
  \dd u &= \sum_{j = 1}^n \psum_{|I| = p} \psum_{|J| = q} \vv{L}_j u_{IJ} \ \tau_j \wedge \zeta_I \wedge \tau_J - \psum_{|I| = p} \psum_{|J| = q} u_{IJ} \ \dd \left(\zeta_I \wedge \tau_J \right) \ \text{modulo $\cinfty(G; \T'^{p + 1,q})$}.
\end{align*}
Focusing on the second term above, since $\dd \left(\zeta_I \wedge \tau_J \right)$ is a left-invariant $(p + q + 1)$-form it can be written as a linear combination (with constant coefficients!)~of our basic forms~\eqref{eq:bases_pq}, which amounts to writing its class modulo $\cinfty(G; \T'^{p + 1,q})$ as
\begin{align}
  \dd'( \zeta_I \wedge \tau_J) &= \psum_{|L| = p} \psum_{|K| = q + 1} \alpha _{LK}^{IJ} \ \zeta_L \wedge \tau_K \label{eq:isom_dprime2}
\end{align}
where\footnote{Notice that when $G$ is commutative these are actually zero: since the basic vector fields~\eqref{eq:canonical_frame} are then pairwise commutative their dual forms $\tau_1, \ldots, \tau_n, \zeta_1, \ldots, \zeta_m$ are automatically closed, hence~\eqref{eq:isom_dprime2} vanishes in that case.} $\alpha _{LK}^{IJ} \in \C$ so that
\begin{align}
  \dd' u &= \sum_{j = 1}^n \psum_{|I| = p} \psum_{|J| = q} \vv{L}_j u_{IJ} \ \tau_j \wedge \zeta_I \wedge \tau_J - \psum_{|I| = p} \psum_{|J| = q} u_{IJ} \ \dd' \left(\zeta_I \wedge \tau_J \right). \label{eq:isom_dprime3}
\end{align}
This allows us to treat $\dd'$ w.r.t.~the bases~\eqref{eq:bases_pq} as a matrix of first-order differential operators, each entry of the matrix being a linear combination of left-invariant vector fields and constants.

If one assumes $G$ to be compact then it admits $\ad$-invariant metrics~\cite[Proposition~4.24]{knapp_lgbi}: a left-invariant Riemannian metric on $G$ (and these are in one-to-one correspondence with Euclidean inner products $\langle \cdot, \cdot \rangle$ on $\gr{g}$) is said to be \emph{$\ad$-invariant} if
\begin{align*}
  \langle [\vv{X}, \vv{Y}], \vv{Z} \rangle &= - \langle \vv{Y}, [\vv{X}, \vv{Z}] \rangle, \quad \forall \vv{X}, \vv{Y}, \vv{Z} \in \gr{g}.
\end{align*}
It is well-known that the Laplace-Beltrami operator associated to such a metric commutes with every left-invariant vector field, hence with the entries of (the matrix of) $\dd'$, as we represented it above~\eqref{eq:isom_dprime3}. This is our starting point, and motivates us to study systems of operators that commute with the Laplace-Beltrami operator on a compact, real-analytic Riemannian manifold: it turns out that in studying some properties of such systems one can simply forget about the group structure of $G$.

In this new setup we develop some results concerning regularity and solvability of such operators in several function spaces (Sections~\ref{sec:reg} and~\ref{sec:reg_com}), for which the introduction of the ``formal'' spaces of sequences $\Pi(\Delta)$ and $\Sigma(\Delta)$, as well as the operators naturally acting on them (see Section~\ref{sec:fdos} for definitions) seems to greatly simplify many proofs. Among them, we make explicit the equivalence between a notion of global hypoellipticity-like regularity for these operators (modulo its kernel, thus particularly well-suited for differential complexes; see  Sections~\ref{sec:reg} and~\ref{sec:reg_com}) and topological properties of their range (i.e.~closedness, in many function spaces naturally carrying well-behaved topologies). A similar discussion may be found in~\cite{bp99} for a class of (non-left-invariant) vector fields on a torus in the smooth setup.

These results can then be specialized to left-invariant involutive structures, for instance yielding, roughly speaking:
\begin{quote}
  In a given bidegree $(p,q)$, if $\dd'$ has closed range when acting between smooth spaces, then it also has closed range when acting between Gevrey spaces (of Gevrey order $s \geq 1$). Moreover, if the ``smooth cohomology space'' is finite dimensional then so is the ``Gevrey cohomology space'' (same bidegree), and their dimensions are the same.
\end{quote}
Precise statements may be found in Section~\ref{sec:immediate}, as well as other related results involving spaces of Schwartz distributions, Gevrey ultradistributions and $L^2$ functions (following a suggestion of Andrew Raich which culminated in the inclusion of Section~\ref{sec:l2}).

We close this work discussing a seemly distinct problem, and prove that given a compact and connected Lie group $G$ and $\VV \sset \C T G$  a left-invariant, elliptic and semisimple involutive structure, then all the cohomology spaces $H^{0,q}_\VV(G; \cinfty(G))$ are left-invariant (see Section~\ref{sec:licoho} for definitions). This question is related to a well-known result due to Chevalley and Eilenberg~\cite{ce48} about the left-invariance of de Rham cohomology classes on $G$, and is more extensively discussed in~\cite{jahnke_thesis} for other classes of elliptic structures.

We were not able, however, to prove the existence of such structures (simultaneously elliptic and semisimple). This is only excused by our sole intention to illustrate further potential applications of the methods developed in the preceding sections.

\subsection*{Acknowledgments} I wish to thank Paulo D.~Cordaro and Andrew Raich for discussing parts of this work and their very useful inputs. Also especially Max R.~Jahnke and Luis F.~Ragognette for their active participation in the earlier stages of this work, including helping to set up the original questions and proposing the framework that led to it, as well as many helpful suggestions throughout its development. \nocite{rodrigues_thesis}

\section{A class of formal operators on compact manifolds} \label{sec:fdos}
Let $\Omega$ be a compact, connected, real-analytic manifold. For simplicity we further require it to be orientable and in fact oriented. We endow $\Omega$ with a real-analytic Riemannian metric, whose associated volume form induces a Radon measure on $\Omega$: the $L^2$ norms below are taken with respect to this measure.

We denote by $\Delta$ the Laplace-Beltrami operator associated with our metric, which is a second-order, positive semidefinite, elliptic LPDO with real-analytic coefficients on $\Omega$. Its spectrum $\sigma(\Delta) \sset \R_+$ is countable, and we denote by $E_\lambda \dfn \ker (\Delta - \lambda I)$ the eigenspace associated with $\lambda \in \sigma(\Delta)$: since $\Delta$ is elliptic, this is a finite dimensional complex vector space of real-analytic functions. Very important for our purposes is the following consequence of Weyl's asymptotic formula~\cite[p.~155]{chavel_eigenvalues}:
\begin{align}
  \sum_{\sigma(\Delta) \setminus 0} (\dim E_\lambda) \lambda^{-2N} &< \infty \label{eq:weyl}
\end{align}
where $N \dfn \dim \Omega$. We also point out that $E_0$ is precisely the space of constant functions since $\Omega$ is connected. By considering each $E_\lambda$ endowed with the $L^2$ inner product, spectral theory tells us that
\begin{align}
  L^2(\Omega) &\cong \hsum{\sigma(\Delta)} E_\lambda \label{eq:dec_delta}
\end{align}
as Hilbert spaces: the right-hand side of~\eqref{eq:dec_delta} (which we will denote by $L^2(\Delta)$ when we want to formally distinguish it from $L^2(\Omega)$) is the Hilbert space direct sum of the family $\{ E_\lambda \st \lambda \in \sigma(\Delta) \}$ i.e.~the space of all sequences $a \dfn (a(\lambda))_{\lambda \in \sigma(\Delta)}$ with $a(\lambda) \in E_\lambda$ and such that $\{ \| a(\lambda) \|_{L^2(\Omega)}^2 \}_{\lambda \in \sigma(\Delta)}$ is summable, endowed with the Hilbert space norm
\begin{align*}
  \| a \|_{L^2(\Delta)} &\dfn \left( \sum_{\sigma(\Delta)} \| a(\lambda) \|_{L^2(\Omega)}^2 \right)^{\frac{1}{2}}.
\end{align*}

As we will see, moving completely to spaces of sequences makes many proofs a lot easier. We introduce the spaces
\begin{align*}
  \Pi(\Delta) \dfn \prod_{\sigma(\Delta)} E_\lambda, & \quad \Sigma(\Delta) \dfn \bigoplus_{\sigma(\Delta)} E_\lambda
\end{align*}
which we interpret as the space of all sequences $a \dfn (a(\lambda))_{\lambda \in \sigma(\Delta)}$ with $a(\lambda) \in E_\lambda$ (without constraints), and its subspace of all eventually null such sequences, respectively. They carry natural locally convex topologies (product and direct sum topologies, respectively), and in this sense we can express $\Pi(\Delta)$ (resp.~$\Sigma(\Delta)$) as the locally convex projective (resp.~injective) limit of the family
\begin{align*}
  \left\{ \bigoplus_F E_\lambda \st \text{$F \sset \sigma(\Delta)$ is finite} \right\}
\end{align*}
in an appropriate sense. In particular, $\Pi(\Delta)$ and $\Sigma(\Delta)$ are an FS and a DFS space, respectively: for definitions and the main results about these kind of spaces (which we will eventually need) see~\cite{kom67}. Moreover, the bilinear pairing
\begin{align}
  (u, v) \in \Pi(\Delta) \times \Sigma(\Delta) &\longmapsto \sum_{\sigma(\Delta)} \langle u(\lambda), \overline{v(\lambda)} \rangle_{L^2(\Omega)} \in \C \label{eq:fund_pairing_pisigma}
\end{align}
turns these spaces into the dual of one another.

In this setting, if for each $\lambda \in \sigma(\Delta)$ we denote by $\mathcal{F}_\lambda: L^2(\Omega) \rarr E_\lambda$ the corresponding orthogonal projection then~\eqref{eq:dec_delta} means that the linear map
\begin{align*}
  \TR{\mathcal{F}}{f}{L^2(\Omega)}{ \left( \mathcal{F}_\lambda(f) \right)_{\lambda \in \sigma(\Delta)}}{\Pi(\Delta)}
\end{align*}
maps $L^2(\Omega)$ isometrically onto $L^2(\Delta)$, with inverse given by the formula
\begin{align}
  f &= \sum_{\sigma(\Delta)} \mathcal{F}_\lambda(f) \label{eq:abst_fourier_exp} 
\end{align}
where convergence takes place in $L^2(\Omega)$ for any $f \in L^2(\Omega)$.

\begin{Rem} \label{rem:rep_theory} On time, we briefly recall how this is related to the group theoretic Fourier Analysis in a compact Lie group $G$. Let $(\xi, V_\xi)$ be an irreducible unitary (complex) representation of $G$, hence $V_\xi$ is finite dimensional and we denote $d_\xi \dfn \dim_\C V_\xi$. We select $\{ v_1, \ldots, v_{d_\xi}\}$ an orthonormal basis for $V_\xi$ and define, for $j,k \in \{1, \ldots, d_\xi\}$:
\begin{align*}
  \TR{\xi_{jk}}{x}{G}{\langle \xi(x)v_j, v_k \rangle_\xi}{\C}
\end{align*}
the so-called \emph{matrix elements} of $(\xi, V_\xi)$. These are smooth functions on $G$ and we denote by $\mathcal{M}_\xi$ the subspace of $L^2(G)$ spanned by $\{ \xi_{jk} \st 1 \leq j,k \leq d_\xi \}$. It is easily checked that the matrix elements are pairwise orthogonal and $\| \xi_{jk} \|_{L^2(G)}^2 = 1/d_\xi$, hence $\{ \sqrt{d_\xi} \xi_{jk} \st 1 \leq j,k \leq d_\xi \}$ is an orthonormal basis for $\mathcal{M}_\xi$. In particular, $\dim_\C \mathcal{M}_\xi = d_\xi^2$. Yet, the definition of $\mathcal{M}_\xi$ does not depend on the choice of $v_1, \ldots, v_{d_\xi}$.

The Peter-Weyl Theorem states that if $(\xi, V_\xi)$ and $(\eta, V_\eta)$ are two non-equivalent irreducible unitary representations of $G$ then $\mathcal{M}_\xi \bot \mathcal{M}_\eta$ in $L^2(G)$ and, moreover,
\begin{align*}
  L^2(G) &\cong \hsum{[\xi] \in \widehat{G}} \mathcal{M}_\xi
\end{align*}
where $\widehat{G}$ is the set of all (equivalence classes of) irreducible unitary representations\footnote{We are actually summing over a set of representations of $G$ containing exactly one representative of each class in $\widehat{G}$.} of $G$. Actually, every $f \in L^2(G)$ can be written as
\begin{align}
  f &= \sum_{[\xi] \in \widehat{G}} d_\xi \sum_{j,k = 1}^{d_\xi} \langle f, \xi_{jk} \rangle_{L^2(G)} \xi_{jk} \label{eq:fourier_pw}
\end{align}
with convergence in $L^2(G)$.

Let $\Delta$ denote the Laplace-Beltrami operator w.r.t.~some $\ad$-invariant metric on $G$. Given an irreducible unitary representation $(\xi, V_\xi)$, it turns out that there exists $\lambda_\xi \geq 0$ (depending only on the equivalence class $[\xi] \in \widehat{G}$) such that
\begin{align*}
  \Delta f &= \lambda_\xi f, \quad \forall f \in \mathcal{M}_\xi
\end{align*}
i.e.~$\lambda_\xi \in \sigma(\Delta)$. Conversely, thanks to the orthogonal decomposition~\eqref{eq:fourier_pw} we have that if $\lambda \in \sigma(\Delta)$ then $\lambda = \lambda_\xi$ for some $[\xi] \in \widehat{G}$ (not necessarily unique). One then immediately realizes that~\eqref{eq:fourier_pw} is actually a refined version of~\eqref{eq:abst_fourier_exp}: for each $\lambda \in \sigma(\Delta)$ we have an orthogonal decomposition
\begin{align*}
  E_\lambda &= \bigoplus_{\substack{[\xi] \in \widehat{G} \\ \lambda_\xi = \lambda}} \mathcal{M}_\xi
\end{align*}
and
\begin{align*}
  \mathcal{F}_\lambda(f) &= \sum_{\substack{[\xi] \in \widehat{G} \\ \lambda_\xi = \lambda}} d_\xi \sum_{j,k = 1}^{d_\xi} \langle f, \xi_{jk} \rangle_{L^2(G)} \xi_{jk}, \quad \forall f \in L^2(G).
\end{align*}

It turns out, however, that in the present work this refinement will not be needed, so we may continue with our compact manifold $\Omega$ as before.
\end{Rem}

A continuous linear map $P: \Pi(\Delta) \rarr \Pi(\Delta)$ that commutes with $\Delta$ will be called a \emph{$\Delta$-invariant formal differential operator}, or $\Delta$FDO for short. To be precise, since $\Delta$ maps each eigenspace $E_\lambda$ into itself, we can regard it as a continuous linear map
\begin{align*}
  \TR{\Delta}{a}{\Pi(\Delta)}{\left( \Delta a(\lambda) \right)_{\lambda \in \sigma(\Delta)}}{\Pi(\Delta)}
\end{align*}
and in that sense we impose the relation $[P, \Delta] = 0$, meaning that $P \circ \Delta = \Delta \circ P$ as linear endomorphisms of $\Pi(\Delta)$. The space of all $\Delta$FDOs is a subalgebra of $\End(\Pi(\Delta))$.

For each $\lambda \in \sigma(\Delta)$, we identify $E_{\lambda}$ with its image under $\mathcal{F}$, the space of all $a \in \Pi(\Delta)$ such that $a(\eta) = 0$ for $\eta \neq \lambda$. Under this identification we have, for $\phi \in E_{\lambda}$,
\begin{align*}
  \Delta(P\phi) = P(\Delta \phi) = P(\lambda \phi) = \lambda (P\phi)
\end{align*}
that is $P(E_\lambda) \sset E_\lambda$: we denote the corresponding linear map by $\widehat{P}(\lambda) \in \End(E_\lambda)$, which, to be precise, is the unique one that renders the diagram
\begin{align*} 
  \xymatrix{
    \Pi(\Delta) \ar[r]^{P} & \Pi(\Delta)  \\
    E_\lambda \ar[u]^{\mathcal{F}} \ar[r]_{\widehat{P}(\lambda)} &  E_\lambda \ar[u]_{\mathcal{F}}
  } 
\end{align*}
into a commutative one. This can be easily extended to \emph{finite} sums of eigenvectors: let $\lambda_1, \ldots, \lambda_k \in \sigma(\Delta)$ be distinct and let $\phi_j \in E_{\lambda_j}$ for $j \in \{1, \ldots, k\}$; for $a \dfn \phi_1 + \cdots + \phi_k$ we have
\begin{align}
  (Pa)(\lambda) &= \widehat{P}(\lambda) a(\lambda), \quad \forall \lambda \in \sigma(\Delta). \label{eq:nice_continuity_dfdo}
\end{align}
\begin{Prop} \label{prop:nice_continuity_dfdo} Property~\eqref{eq:nice_continuity_dfdo} holds for every $a \in \Pi(\Delta)$.
  \begin{proof} Let $\{ F_\nu \}_{\nu \in \N}$ be a family of finite subsets of $\sigma(\Delta)$ such that $F_\nu \nearrow \sigma(\Delta)$, and for each $\nu \in \N$ let $a_\nu \in \Pi(\Delta)$ be defined by the truncation
    \begin{align*}
      a_\nu(\lambda) &\dfn
      \begin{cases}
        a(\lambda), &\text{if $\lambda \in F_\nu$} \\
        0, &\text{otherwise}
      \end{cases}
    \end{align*}
    which as a finite sum of eigenvectors of $\Delta$ satisfies~\eqref{eq:nice_continuity_dfdo} i.e.~$(Pa_\nu)(\lambda) = \widehat{P}(\lambda) a_\nu(\lambda)$ for every $\lambda \in \sigma(\Delta)$. Also, since $a_\nu(\lambda) \to a(\lambda)$ for every $\lambda \in \sigma(\Delta)$ we have $a_\nu \to a$ in $\Pi(\Delta)$, and continuity of $P$ finally kicks in: we have $Pa_\nu \to Pa$ in $\Pi(\Delta)$, which again means that $(Pa_\nu)(\lambda) \to (Pa)(\lambda)$ for every $\lambda \in \sigma(\Delta)$. The conclusion follows at once.
  \end{proof}
\end{Prop}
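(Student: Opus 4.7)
The plan is to extend the identity \eqref{eq:nice_continuity_dfdo}, which has already been established for finite sums of eigenvectors, to arbitrary $a \in \Pi(\Delta)$ by a density/continuity argument in the product topology. The crucial feature of the topology on $\Pi(\Delta) = \prod_{\sigma(\Delta)} E_\lambda$ is that convergence is coordinatewise, so any element can be approximated by truncations supported on finite subsets of the spectrum, on which the previously derived identity already applies.

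Concretely, I would fix an exhaustion $F_\nu \nearrow \sigma(\Delta)$ by finite subsets (this exists since $\sigma(\Delta)$ is countable) and, for each $\nu$, define $a_\nu \in \Pi(\Delta)$ to be the truncation that agrees with $a$ on $F_\nu$ and vanishes outside $F_\nu$. Each $a_\nu$ is a finite sum of eigenvectors of $\Delta$, so the identity $(Pa_\nu)(\lambda) = \widehat{P}(\lambda) a_\nu(\lambda)$ holds for all $\lambda \in \sigma(\Delta)$ by the case already treated.

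Next I would check that $a_\nu \to a$ in $\Pi(\Delta)$: this is immediate from the definition of the product topology, since for every fixed $\lambda \in \sigma(\Delta)$ one has $a_\nu(\lambda) = a(\lambda)$ for all $\nu$ large enough (namely, as soon as $\lambda \in F_\nu$). Invoking the continuity of $P$, it follows that $Pa_\nu \to Pa$ in $\Pi(\Delta)$, hence $(Pa_\nu)(\lambda) \to (Pa)(\lambda)$ for every $\lambda$. On the other hand, for each fixed $\lambda$ the left-hand side $(Pa_\nu)(\lambda) = \widehat{P}(\lambda) a_\nu(\lambda)$ is eventually constant and equal to $\widehat{P}(\lambda) a(\lambda)$. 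Comparing the two limits yields $(Pa)(\lambda) = \widehat{P}(\lambda) a(\lambda)$, which is the desired identity.

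There is no real obstacle here: the proposition is essentially a formal consequence of how the product topology interacts with continuous endomorphisms, and the only thing that could go wrong (failure of $a_\nu \to a$, or lack of coordinatewise continuity of the limit) is ruled out by the very definition of the topology on $\Pi(\Delta)$. The argument requires nothing beyond continuity of $P$ and the case of finite sums of eigenvectors established just before the statement.
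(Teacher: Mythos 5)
Your proposal is correct and follows exactly the same route as the paper's proof: truncation along a finite exhaustion of $\sigma(\Delta)$, the already-established identity for finite sums of eigenvectors, coordinatewise convergence in the product topology, and continuity of $P$. The only (harmless) addition is your explicit remark that $(Pa_\nu)(\lambda)$ is eventually constant in $\nu$, which the paper leaves implicit in "the conclusion follows at once."
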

As an easy corollary, we notice that every $\Delta$FDO maps $\Sigma(\Delta)$ into itself.

\begin{Rem} Given a $\Delta$FDO $P$ we can assemble a map
\begin{align*}
  \TR{\widehat{P}}{\lambda}{\sigma(\Delta)}{\widehat{P}(\lambda)}{\prod_{\sigma(\Delta)} \End(E_\lambda)}
\end{align*}
which we may interpret as a kind of symbol for $P$. Actually, we might call a \emph{symbol} any map
\begin{align*}
  \Phi: \sigma(\Delta) \longrightarrow \prod_{\sigma(\Delta)} \End(E_\lambda)
\end{align*}
such that $\Phi(\lambda) \in \End(E_\lambda)$ for every $\lambda \in \sigma(\Delta)$: they form an algebra under pointwise composition, and the map that associates to each $\Delta$FDO its symbol is clearly an isomorphism of algebras. We may regard its inverse
\begin{align*}
  \Phi &\longmapsto \left\{
  \TR{\Phi(\Delta)}{a}{\Pi(\Delta)}{ \left( \Phi(\lambda)a(\lambda) \right)_{\lambda \in \sigma(\Delta)} }{\Pi(\Delta)}
  \right.
\end{align*}
as a kind of quantization map (it is easy to show that the quantization of any symbol is continuous in $\Pi(\Delta)$, hence a true $\Delta$FDO). This point-of-view provides an alternative proof for Proposition~\ref{prop:nice_continuity_dfdo}: both $P$ and the quantization of $\widehat{P}$ are continuous endomorphisms of $\Pi(\Delta)$ that, according to~\eqref{eq:nice_continuity_dfdo}, coincide on $\Sigma(\Delta)$ -- a dense subspace of $\Pi(\Delta)$ --, so they must be equal everywhere. However, the conclusion of Proposition~\ref{prop:nice_continuity_dfdo} is trivial for any $\Delta$FDO defined by quantization of a symbol!

It should also be pointed out that all of this works even if $P$ were defined in a much smaller space to start with. Indeed, suppose that $P: \Sigma(\Delta) \rarr \Pi(\Delta)$ is any linear map that commutes with $\Delta$ (which, by the way, also acts as a linear endomorphism of $\Sigma(\Delta)$ in an obvious way) then again $P(E_\lambda) \sset E_\lambda$ for every $\lambda \in \sigma(\Delta)$: this implies that $P$ is automatically continuous and also allows us to define its symbol $\widehat{P}$ in the same manner as we did before. We can then quantize $\widehat{P}$ by
\begin{align*}
  \TR{\tilde{P}}{a}{\Pi(\Delta)}{\left( \widehat{P}(\lambda) a(\lambda) \right)_{\lambda \in \sigma(\Delta)}}{\Pi(\Delta)}
\end{align*}
which is a very natural linear extension of $P$ to $\Pi(\Delta)$ -- actually, the unique continuous one: notice that $\tilde{P}$ is a $\Delta$FDO and its symbol is precisely $\widehat{P}$. The foremost example we have in mind is of course a linear differential operator $P: \cinfty(\Omega) \rarr \cinfty(\Omega)$ that commutes with $\Delta$ e.g.~a left-invariant differential operator (when $\Omega$ is a Lie group).
\end{Rem}

More generally, we will deal with systems of such objects. A matrix $P = (P_{ij})_{n \times m}$ of $\Delta$FDOs naturally defines a continuous linear action $P: \Pi(\Delta)^m \rarr \Pi(\Delta)^n$: if $a \dfn (a_1, \ldots, a_m) \in \Pi(\Delta)^m$ then $Pa \in \Pi(\Delta)^n$ is defined by
\begin{align*}
  (Pa)_i &\dfn \sum_{j = 1}^m P_{ij} a_j, \quad \forall i \in \{1, \ldots, n\}.
\end{align*}
It is clear that for each $\lambda \in \sigma(\Delta)$ we have $P(E_\lambda^m) \sset E_\lambda^n$ and the induced linear map is precisely the one defined by the matrix $\widehat{P}(\lambda) \dfn ( \widehat{P}_{ij}(\lambda) )_{n \times m}$ which we regard as a linear map $\widehat{P}(\lambda): E_\lambda^m \rarr E_\lambda^n$. For $a \in \Pi(\Delta)^m$ we also define $a(\lambda) \dfn (a_1(\lambda), \ldots, a_m(\lambda)) \in E_\lambda^m$, so in particular $(Pa)(\lambda) = \widehat{P}(\lambda)a(\lambda)$ for every $\lambda \in \sigma(\Delta)$, just as in the scalar case (Proposition~\ref{prop:nice_continuity_dfdo}).

Now let $P = (P_{ij})_{n \times m}$ and $Q = (Q_{jk})_{m \times r}$ be two matrices of $\Delta$FDOs forming a differential complex
\begin{align}
  \xymatrix{
    \Pi(\Delta)^r \ar[r]^Q & \Pi(\Delta)^m \ar[r]^P & \Pi(\Delta)^n 
  } \label{eq:complexPQ}
\end{align}
i.e.~$P \circ Q = 0$. By looking at their symbols, it is clear that for each $\lambda \in \sigma(\Delta)$ we have a new differential complex, now involving only finite dimensional vector spaces:
\begin{align*}
  \xymatrix{
    E_\lambda^r \ar[r]^{\widehat{Q}(\lambda)} & E_\lambda^m \ar[r]^{\widehat{P}(\lambda)} & E_\lambda^n. 
  } 
\end{align*}
We are interested in studying several flavors of cohomology for the complex~\eqref{eq:complexPQ}. For instance, assume that there exists a linear subspace $\mathscr{V} \sset \Pi(\Delta)$ such that $Q$ maps $\mathscr{V}^r$ into $\mathscr{V}^m$, which is then mapped by $P$ into $\mathscr{V}^n$. We have a new differential complex
\begin{align}
  \xymatrix{
    \mathscr{V}^r \ar[r]^Q & \mathscr{V}^m \ar[r]^P & \mathscr{V}^n 
  } \label{eq:complexPQ_V}
\end{align}
whose cohomology we denote by
\begin{align*}
  \mathcal{H}_{P,Q} (\mathscr{V}) &\dfn \frac{ \ker \{ P: \mathscr{V}^m \rarr \mathscr{V}^n \} }{ \ran \{ Q: \mathscr{V}^r \rarr \mathscr{V}^m \} }.
\end{align*}

\begin{Lem} \label{lem:algebraic_isoms_delta} We have natural isomorphisms of vector spaces
  \begin{align*}
    \mathcal{H}_{P,Q} (\Sigma(\Delta)) \cong \bigoplus_{\sigma(\Delta)} \frac{\ker \widehat{P}(\lambda)}{\ran \widehat{Q}(\lambda)}, & \quad \mathcal{H}_{P,Q} (\Pi(\Delta)) \cong \prod_{\sigma(\Delta)} \frac{\ker \widehat{P}(\lambda)}{\ran \widehat{Q}(\lambda)}.
  \end{align*}
  \begin{proof} We prove the first isomorphism, for the second one is very similar (and actually a bit easier). For any $a \in \Sigma(\Delta)^m$ belonging to $\ker P$ we have
    \begin{align*}
      \widehat{P}(\lambda)a(\lambda) = (Pa)(\lambda) = 0, \quad \forall \lambda \in \sigma(\Delta),
    \end{align*}
    i.e.~$a(\lambda) \in \ker \widehat{P}(\lambda)$ for every $\lambda \in \sigma(\Delta)$. Because any such $a$ will, by definition of direct sum, have at most finitely many non-zero components $a(\lambda)$, we can naturally associate to it an element
    \begin{align*}
      A(a) &\in \bigoplus_{\sigma(\Delta)} \frac{\ker \widehat{P}(\lambda)}{\ran \widehat{Q}(\lambda)}
    \end{align*}
    which depends on the class of $a$ in $\mathcal{H}_{P,Q} (\Sigma(\Delta))$ only. The induced linear map
    \begin{align*}
      A: \mathcal{H}_{P,Q} (\Sigma(\Delta)) \longrightarrow \bigoplus_{\sigma(\Delta)} \frac{\ker \widehat{P}(\lambda)}{\ran \widehat{Q}(\lambda)}
    \end{align*}
    is clearly onto, and also injective. Indeed, if $A(a) = 0$ then for every $\lambda \in \sigma(\Delta)$ we have $a(\lambda) \in \ran \widehat{Q}(\lambda)$ -- and most of these are zero --, hence there exists $u(\lambda) \in E_\lambda^r$ such that $\widehat{Q}(\lambda)u(\lambda) = a(\lambda)$: this defines an element $u \in \Sigma(\Delta)^r$ such that $Qu = a$, so the class of $a$ in $\mathcal{H}_{P,Q} (\Sigma(\Delta))$ is zero.
  \end{proof}
\end{Lem}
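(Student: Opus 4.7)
The strategy is to leverage the pointwise formula $(Pa)(\lambda) = \widehat{P}(\lambda) a(\lambda)$ established in Proposition~\ref{prop:nice_continuity_dfdo} (and extended componentwise to matrices of $\Delta$FDOs), which transfers the entire complex to a family of complexes of finite dimensional vector spaces indexed by $\sigma(\Delta)$. Both isomorphisms arise from the very same construction, differing only in whether one is allowed finite-support or arbitrary sequences, so I plan to set them up in parallel and highlight where the two cases diverge.

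For the forward map, take $a \in \Sigma(\Delta)^m$ (resp.\ $a \in \Pi(\Delta)^m$) with $Pa = 0$. The pointwise identity forces $a(\lambda) \in \ker \widehat{P}(\lambda)$ for every $\lambda \in \sigma(\Delta)$, so the family $(a(\lambda) + \ran \widehat{Q}(\lambda))_{\lambda \in \sigma(\Delta)}$ lies in $\bigoplus_{\sigma(\Delta)} \ker \widehat{P}(\lambda) / \ran \widehat{Q}(\lambda)$ in the $\Sigma$ case (only finitely many components are nonzero), and in the corresponding product in the $\Pi$ case. If $a = Qu$ for some $u$ in the appropriate space of $r$-tuples, then $a(\lambda) = \widehat{Q}(\lambda) u(\lambda)$ for every $\lambda$, so the induced class vanishes; this shows the map descends to a well-defined linear map $A$ from $\mathcal{H}_{P,Q}$ to the claimed quotient.

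Surjectivity and injectivity are then checked componentwise, exploiting the fact that each $E_\lambda$ is finite dimensional. For surjectivity, given any element of the right-hand side, lift each entry to a representative $a(\lambda) \in \ker \widehat{P}(\lambda)$; in the direct sum case these are supported on a finite set, in the product case the resulting $a$ belongs to $\Pi(\Delta)^m$ by definition, and either way $Pa = 0$ pointwise. For injectivity, if $A([a]) = 0$ then $a(\lambda) \in \ran \widehat{Q}(\lambda)$ for every $\lambda$, so one picks $u(\lambda) \in E_\lambda^r$ with $\widehat{Q}(\lambda) u(\lambda) = a(\lambda)$; in the $\Sigma$ case one sets $u(\lambda) = 0$ off the (finite) support of $a$, landing in $\Sigma(\Delta)^r$, while in the $\Pi$ case no such restriction is needed and $u \in \Pi(\Delta)^r$ automatically.

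Once the pointwise symbol calculus is in place, the argument is bookkeeping: the only mildly delicate points are the (countable) choice of preimages $u(\lambda)$ in the $\Pi$ version and the care required to keep track of finite support versus unrestricted sequences in the $\Sigma$ version. I do not anticipate any serious obstacle.
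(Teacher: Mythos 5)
Your proposal is correct and follows essentially the same route as the paper: the pointwise identity $(Pa)(\lambda)=\widehat{P}(\lambda)a(\lambda)$ reduces everything to a componentwise check, with the induced map into the direct sum (resp.\ product) of the quotients $\ker\widehat{P}(\lambda)/\ran\widehat{Q}(\lambda)$ shown to be well defined, onto by lifting representatives, and injective by choosing preimages $u(\lambda)$ under $\widehat{Q}(\lambda)$. The only difference is presentational: you treat the $\Sigma$ and $\Pi$ cases in parallel, while the paper writes out only the $\Sigma$ case and remarks that the other is similar and easier.
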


\begin{Rem} Clearly:
  \begin{align*}
    \mathcal{H}_{P,Q}(E_\lambda) &= \frac{\ker \widehat{P}(\lambda)}{\ran \widehat{Q}(\lambda)}, \quad \forall \lambda \in \sigma(\Delta).
  \end{align*}
\end{Rem}

The following consequence of the previous lemma is immediate.
\begin{Cor} \label{cor:cond_fin_dim} All the conditions below are equivalent.
  \begin{enumerate}
  \item $\mathcal{H}_{P,Q} (\Sigma(\Delta))$ is finite dimensional.
  \item $\mathcal{H}_{P,Q} (\Pi(\Delta))$ is finite dimensional.
  \item $\ker \widehat{P}(\lambda) = \ran \widehat{Q}(\lambda)$ for all but finitely many $\lambda \in \sigma(\Delta)$.
  \end{enumerate}
  In that case
  \begin{align*}
    \dim \mathcal{H}_{P,Q} (\Pi(\Delta)) = \dim \mathcal{H}_{P,Q} (\Sigma(\Delta)) = \sum_{\sigma(\Delta)} \dim \left( \frac{\ker \widehat{P}(\lambda)}{\ran \widehat{Q}(\lambda)} \right).
  \end{align*}
\end{Cor}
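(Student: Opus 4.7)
The plan is that this corollary is essentially an immediate consequence of Lemma~\ref{lem:algebraic_isoms_delta} together with two elementary facts from linear algebra, the main one being that a direct sum (or product) of vector spaces is finite dimensional if and only if all but finitely many summands (factors) are zero and the remaining ones are finite dimensional. I would begin by observing that each eigenspace $E_\lambda$ is finite dimensional (since $\Delta$ is elliptic), so every local cohomology $\ker \widehat{P}(\lambda)/\ran \widehat{Q}(\lambda)$ is automatically finite dimensional, and the question reduces purely to how many of them are nonzero.

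Having made this observation, I would prove $(1)\Iff(3)$ first: by Lemma~\ref{lem:algebraic_isoms_delta}, $\mathcal{H}_{P,Q}(\Sigma(\Delta))$ is isomorphic to $\bigoplus_{\sigma(\Delta)} \ker \widehat{P}(\lambda)/\ran \widehat{Q}(\lambda)$, and a direct sum of (finite dimensional) vector spaces is finite dimensional if and only if only finitely many summands are nonzero, which is precisely condition~(3). For $(2)\Iff(3)$ the argument is the same, using that a product $\prod_{i\in I} V_i$ of nonzero vector spaces has dimension at least $|I|$ (it contains $\bigoplus_{i\in I} V_i$ as a subspace), so finite-dimensionality of the product forces only finitely many factors to be nonzero.

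Once the equivalence is established, under condition~(3) the direct sum and the product agree — both reduce to a finite direct sum of the finitely many nontrivial local cohomologies — and the dimension formula
\begin{align*}
  \dim \mathcal{H}_{P,Q}(\Pi(\Delta)) = \dim \mathcal{H}_{P,Q}(\Sigma(\Delta)) = \sum_{\sigma(\Delta)} \dim \!\left( \frac{\ker \widehat{P}(\lambda)}{\ran \widehat{Q}(\lambda)} \right)
\end{align*}
follows by additivity of dimension over finite direct sums (noting that the sum on the right-hand side has only finitely many nonzero terms).

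There is no real obstacle here: the substance of the argument is entirely contained in Lemma~\ref{lem:algebraic_isoms_delta}, and what remains is only a routine linear-algebra bookkeeping step. The only conceptual point worth being careful about is ensuring that the argument for the product case genuinely uses finite dimensionality of each $E_\lambda$ (otherwise the product could have finite dimensional quotients without finitely many factors vanishing — but this cannot happen here since each factor is itself finite dimensional and a subspace of the product).
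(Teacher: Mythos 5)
Your proposal is correct and matches the paper, which simply declares the corollary an immediate consequence of Lemma~\ref{lem:algebraic_isoms_delta}; you have just spelled out the routine linear algebra (finite dimensionality of a direct sum or product of finite dimensional spaces forces all but finitely many factors to vanish, after which both sides collapse to the same finite direct sum). No gaps.
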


The inclusion map $\Sigma(\Delta)^m \hookrightarrow \Pi(\Delta)^m$ induces a linear map
\begin{align}
  \mathcal{H}_{P,Q}(\Sigma(\Delta)) \longrightarrow \mathcal{H}_{P,Q}(\Pi(\Delta)) \label{eq:inj_map_cohomologies} 
\end{align}
that is \emph{always} injective. Indeed, let $a \in \Sigma(\Delta)^m$ represent a class in $\mathcal{H}_{P,Q}(\Sigma(\Delta))$ such that, as an element of $\Pi(\Delta)^m$, belongs to $\ran \{ Q: \Pi(\Delta)^r \rarr \Pi(\Delta)^m \}$. This means that there exists $v \in \Pi(\Delta)^r$ -- perhaps with infinitely many non-zero entries -- such that $Qv = a$, that is, $\widehat{Q}(\lambda)v(\lambda) = a(\lambda)$ for all $\lambda \in \sigma(\Delta)$. Define $u \in \Pi(\Delta)^r$ by
\begin{align*}
  u(\lambda) &\dfn
  \begin{cases}
    v(\lambda), &\text{if $a(\lambda) \neq 0$}, \\
    0, &\text{if $a(\lambda) = 0$},
  \end{cases}
\end{align*}
hence $u$ actually belongs to $\Sigma(\Delta)^r$ since $a \in \Sigma(\Delta)^m$ by hypothesis. Obviously $\widehat{Q}(\lambda) u(\lambda) = a(\lambda)$ for every $\lambda \in \sigma(\Delta)$ i.e.~$Qu = a$, proving that the class of $a$ in $\mathcal{H}_{P,Q}(\Sigma(\Delta))$ is zero. Corollary~\ref{cor:cond_fin_dim} tells us that if either of these cohomology spaces is finite dimensional then~\eqref{eq:inj_map_cohomologies} is an isomorphism.

Now, let us go back to the situation in~\eqref{eq:complexPQ_V}, but assume further that $\Sigma(\Delta) \sset \mathscr{V}$. The inclusion maps $\Sigma(\Delta)^m \hookrightarrow \mathscr{V}^m \hookrightarrow \Pi(\Delta)^m$ induce linear maps
\begin{align}
  \mathcal{H}_{P,Q}(\Sigma(\Delta)) \longrightarrow \mathcal{H}_{P,Q}(\mathscr{V}) \longrightarrow \mathcal{H}_{P,Q}(\Pi(\Delta)) \label{eq:natural_maps_cohomology}
\end{align}
the first of them clearly injective (since so is their composition). This implies at once:
\begin{Prop} \label{prop:findimV} If $\mathcal{H}_{P,Q} (\mathscr{V})$ is finite dimensional then so is $\mathcal{H}_{P,Q} (\Sigma(\Delta))$ and, in that case,
  \begin{align}
    \dim \mathcal{H}_{P,Q} (\mathscr{V}) &\geq \sum_{\sigma(\Delta)} \dim \left( \frac{\ker \widehat{P}(\lambda)}{\ran \widehat{Q}(\lambda)} \right). \label{eq:basic_ineq_dim}
  \end{align}
\end{Prop}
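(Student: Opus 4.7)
The plan is to deduce everything by composing the two natural maps in~\eqref{eq:natural_maps_cohomology} and appealing to the algebraic isomorphism of Lemma~\ref{lem:algebraic_isoms_delta}. All the hard work has already been set up.

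First I would recall from the discussion immediately preceding the statement that the composition $\mathcal{H}_{P,Q}(\Sigma(\Delta)) \to \mathcal{H}_{P,Q}(\Pi(\Delta))$ is injective: given a class in $\mathcal{H}_{P,Q}(\Sigma(\Delta))$ represented by $a \in \Sigma(\Delta)^m$ which, viewed in $\Pi(\Delta)^m$, lies in $\ran\{Q : \Pi(\Delta)^r \to \Pi(\Delta)^m\}$, one can trim the lift of $a$ to an element of $\Sigma(\Delta)^r$ by zeroing out the components where $a(\lambda) = 0$. Since this composition factors through $\mathcal{H}_{P,Q}(\mathscr{V})$, the first map $\mathcal{H}_{P,Q}(\Sigma(\Delta)) \to \mathcal{H}_{P,Q}(\mathscr{V})$ is itself injective.

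Next, if $\mathcal{H}_{P,Q}(\mathscr{V})$ is finite dimensional, then any subspace of it is finite dimensional, so $\mathcal{H}_{P,Q}(\Sigma(\Delta))$ is finite dimensional and
\begin{align*}
  \dim \mathcal{H}_{P,Q}(\Sigma(\Delta)) \leq \dim \mathcal{H}_{P,Q}(\mathscr{V}).
\end{align*}
By Lemma~\ref{lem:algebraic_isoms_delta} (or equivalently Corollary~\ref{cor:cond_fin_dim}),
\begin{align*}
  \dim \mathcal{H}_{P,Q}(\Sigma(\Delta)) = \sum_{\sigma(\Delta)} \dim\left( \frac{\ker \widehat{P}(\lambda)}{\ran \widehat{Q}(\lambda)} \right),
\end{align*}
which yields~\eqref{eq:basic_ineq_dim} upon combining with the previous inequality.

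There is no genuine obstacle: the content of the proposition is a bookkeeping consequence of the injectivity of~\eqref{eq:inj_map_cohomologies} (already verified) together with the explicit description of $\mathcal{H}_{P,Q}(\Sigma(\Delta))$ from Lemma~\ref{lem:algebraic_isoms_delta}. The only point worth stressing in the write-up is why the first arrow in~\eqref{eq:natural_maps_cohomology} is injective, which is immediate from the injectivity of the composition established just before the statement.
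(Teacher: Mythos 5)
Your proposal is correct and follows exactly the route the paper takes: the injectivity of the composite map $\mathcal{H}_{P,Q}(\Sigma(\Delta)) \to \mathcal{H}_{P,Q}(\Pi(\Delta))$ (established just before the statement) forces the first arrow in~\eqref{eq:natural_maps_cohomology} to be injective, and then Corollary~\ref{cor:cond_fin_dim} identifies $\dim \mathcal{H}_{P,Q}(\Sigma(\Delta))$ with the sum in~\eqref{eq:basic_ineq_dim}. The paper in fact states the proposition without a separate proof precisely because it is this immediate consequence of the preceding digression.
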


In the next section, we will construct many spaces $\mathscr{V}$ to which we are interested in applying these results. We shall also pursue conditions (necessary and sufficient) when the inequality~\eqref{eq:basic_ineq_dim} is actually an equality: we close this section with two results that, although very simple and general, will be invaluable in that direction.
\begin{Lem} \label{lem:reg_impl_inject_cohom} Assume that $Q$ satisfies the following regularity condition w.r.t.~$\mathscr{V}$:
  \begin{align}
    \forall u \in \Pi(\Delta)^r, \ Qu \in \mathscr{V}^m &\Longrightarrow \text{$\exists v \in \ker Q$ such that $u - v \in \mathscr{V}^r$}. \label{eq:abstract_reg}
  \end{align}
  Then the natural map $\mathcal{H}_{P,Q}(\mathscr{V}) \rightarrow \mathcal{H}_{P,Q}(\Pi(\Delta))$ is injective.
  \begin{proof} Let $a \in \mathscr{V}^m$ represent a class in $\mathcal{H}_{P,Q}(\mathscr{V})$ whose corresponding class in $\mathcal{H}_{P,Q}(\Pi(\Delta))$ is zero: there exists $u \in \Pi(\Delta)^r$ such that $Qu = a$. By hypothesis, there exists $v \in \ker Q$ such that $u - v \in \mathscr{V}^r$, and clearly $Q(u - v) = Qu = a$, thus proving that the class of $a$ in $\mathcal{H}_{P,Q}(\mathscr{V})$ is zero.
  \end{proof}
\end{Lem}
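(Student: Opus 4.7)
The proof should be a direct unwinding of definitions combined with the regularity hypothesis~\eqref{eq:abstract_reg}, so I do not expect any genuine obstacle.

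The plan is as follows. Start with $a \in \mathscr{V}^m$ representing a class in $\mathcal{H}_{P,Q}(\mathscr{V})$ that is mapped to zero under the natural map $\mathcal{H}_{P,Q}(\mathscr{V}) \rightarrow \mathcal{H}_{P,Q}(\Pi(\Delta))$. By definition of this map (induced by the inclusion $\mathscr{V}^m \hookrightarrow \Pi(\Delta)^m$) and of the cohomology on $\Pi(\Delta)$, this means $a \in \ran\{Q : \Pi(\Delta)^r \to \Pi(\Delta)^m\}$. So pick any $u \in \Pi(\Delta)^r$ with $Qu = a$. The key observation is that since $a \in \mathscr{V}^m$, we have $Qu \in \mathscr{V}^m$ as well, so the hypothesis~\eqref{eq:abstract_reg} applies.

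Now invoke~\eqref{eq:abstract_reg} to produce $v \in \ker Q$ (where the kernel is taken inside $\Pi(\Delta)^r$) such that $u - v \in \mathscr{V}^r$. Setting $u' \dfn u - v$, we have $u' \in \mathscr{V}^r$ and $Qu' = Qu - Qv = a - 0 = a$, so $a$ lies in $\ran\{Q : \mathscr{V}^r \to \mathscr{V}^m\}$. Hence the class of $a$ in $\mathcal{H}_{P,Q}(\mathscr{V})$ is zero, which gives injectivity of the natural map.

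The only subtle point worth flagging (though it is really just a matter of reading the hypothesis correctly) is that $\ker Q$ in~\eqref{eq:abstract_reg} refers to the kernel of $Q$ acting on $\Pi(\Delta)^r$, not on $\mathscr{V}^r$; this is exactly what allows us to modify a potentially ``bad'' preimage $u \in \Pi(\Delta)^r$ into a ``good'' one lying in $\mathscr{V}^r$ while preserving its image under $Q$. No other ingredient — neither $P$, nor the complex condition $P \circ Q = 0$, nor any property of $\mathscr{V}$ beyond being a subspace stable under $Q$ — enters the argument.
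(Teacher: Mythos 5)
Your proposal is correct and follows exactly the same argument as the paper: take a preimage $u \in \Pi(\Delta)^r$ of $a$ under $Q$, use~\eqref{eq:abstract_reg} to correct it by an element of $\ker Q$ into a preimage lying in $\mathscr{V}^r$, and conclude the class vanishes. Your clarifying remark that $\ker Q$ is taken inside $\Pi(\Delta)^r$ is the right reading of the hypothesis and matches the paper's intent.
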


\begin{Cor} \label{cor:isoms_abstract} Suppose that $\dim \mathcal{H}_{P,Q}(\mathscr{V}) < \infty$ and $Q$ satisfies~\eqref{eq:abstract_reg}. Then the natural maps~\eqref{eq:natural_maps_cohomology} are all isomorphisms; in particular
  \begin{align}
    \dim \mathcal{H}_{P,Q} (\mathscr{V}) &= \sum_{\sigma(\Delta)} \dim \left( \frac{\ker \widehat{P}(\lambda)}{\ran \widehat{Q}(\lambda)} \right). \label{eq:basic_eq_dim}
  \end{align}
  \begin{proof} By Lemma~\ref{lem:reg_impl_inject_cohom} and our previous digression all the aforementioned morphisms are injective, so finite dimensionality of $\mathcal{H}_{P,Q}(\mathscr{V})$ implies the same property for $\mathcal{H}_{P,Q}(\Sigma(\Delta))$. This, in turn, implies that $\mathcal{H}_{P,Q}(\Pi(\Delta))$ is finite dimensional by Corollary~\ref{cor:cond_fin_dim}, which also yields our final conclusion.
  \end{proof}
\end{Cor}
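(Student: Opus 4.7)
The plan is to exploit injectivity on both sides together with the dimension formula from Corollary~\ref{cor:cond_fin_dim}. First I would observe that the composition of the two maps in~\eqref{eq:natural_maps_cohomology} is precisely the map~\eqref{eq:inj_map_cohomologies}, which the discussion just before Proposition~\ref{prop:findimV} shows to be unconditionally injective; hence the first map $\mathcal{H}_{P,Q}(\Sigma(\Delta)) \to \mathcal{H}_{P,Q}(\mathscr{V})$ is injective as well. The regularity hypothesis~\eqref{eq:abstract_reg}, fed into Lemma~\ref{lem:reg_impl_inject_cohom}, gives injectivity of the second map $\mathcal{H}_{P,Q}(\mathscr{V}) \to \mathcal{H}_{P,Q}(\Pi(\Delta))$.

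Next I would chase dimensions. Injectivity of the first map and finite-dimensionality of $\mathcal{H}_{P,Q}(\mathscr{V})$ force $\mathcal{H}_{P,Q}(\Sigma(\Delta))$ to be finite-dimensional, so by Corollary~\ref{cor:cond_fin_dim} $\mathcal{H}_{P,Q}(\Pi(\Delta))$ is finite-dimensional too and
\begin{align*}
  \dim \mathcal{H}_{P,Q}(\Sigma(\Delta)) = \dim \mathcal{H}_{P,Q}(\Pi(\Delta)) = \sum_{\sigma(\Delta)} \dim \left( \frac{\ker \widehat{P}(\lambda)}{\ran \widehat{Q}(\lambda)} \right).
\end{align*}
Now the chain of injections
\begin{align*}
  \mathcal{H}_{P,Q}(\Sigma(\Delta)) \hookrightarrow \mathcal{H}_{P,Q}(\mathscr{V}) \hookrightarrow \mathcal{H}_{P,Q}(\Pi(\Delta))
\end{align*}
sandwiches $\mathcal{H}_{P,Q}(\mathscr{V})$ between two finite-dimensional spaces of the same dimension, and an injection between finite-dimensional vector spaces of equal dimension is automatically an isomorphism; this forces both maps in~\eqref{eq:natural_maps_cohomology} to be isomorphisms and yields~\eqref{eq:basic_eq_dim}.

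No step looks genuinely hard, since the heavy lifting has been packaged into Lemma~\ref{lem:reg_impl_inject_cohom} and Corollary~\ref{cor:cond_fin_dim}; the only subtlety to keep in mind is that the unconditional injectivity of the composition~\eqref{eq:inj_map_cohomologies} is what lets us deduce injectivity of the first arrow without invoking the regularity hypothesis separately for $Q$ relative to $\Sigma(\Delta)$.
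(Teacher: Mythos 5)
Your proposal is correct and follows essentially the same route as the paper: injectivity of the first arrow from the unconditional injectivity of the composition~\eqref{eq:inj_map_cohomologies}, injectivity of the second from Lemma~\ref{lem:reg_impl_inject_cohom}, and then the dimension count via Corollary~\ref{cor:cond_fin_dim}. You merely spell out the final sandwich argument that the paper's proof leaves implicit in ``which also yields our final conclusion.''
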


\section{Prelude: $L^2$ theory of $\Delta$FDOs} \label{sec:l2}

In what follows we will introduce some abstract classes of subspaces of $\Pi(\Delta)$ which will help us to describe the image under $\mathcal{F}$ of several function spaces (subspaces of $L^2(\Omega)$ as well as their generalized counterparts) in a useful way to study regularity (by which we mean hypoellipticity-like properties) of $\Delta$FDOs. In the next section, we introduce this framework that allows us to treat simultaneously the space of smooth function and the spaces of Gevrey functions (including real-analytic functions, and perhaps more general Denjoy-Carleman classes with minor modifications).

In the present section, however, we start with a small digression on how these operators act on $L^2(\Delta)$, which turns out to be surprisingly simple, gives a general feeling on what we will do next and provides some motivation for the forthcoming steps. From now on we state and prove our results mostly for single scalar operators, the adaptation to systems being immediate.

Let $P$ be a $\Delta$FDO, which we regard as a densely defined operator $P: D_P \sset L^2(\Delta) \rarr L^2(\Delta)$, where
\begin{align*}
  D_P &\dfn \{ u \in L^2(\Delta) \st Pu \in L^2(\Delta) \}
\end{align*}
(clearly $\Sigma(\Delta) \sset D_P$), which is closed. Indeed, let $\{ u_\nu \}_{\nu \in \N} \sset D_P$ and $u, f \in L^2(\Delta)$ be such that $u_\nu \to u$ and $Pu_\nu \to f$ in $L^2(\Delta)$. This implies that $u_\nu(\lambda) \to u(\lambda)$ and $\widehat{P}(\lambda)u_\nu(\lambda) \to f(\lambda)$ for all $\lambda \in \sigma(\Delta)$, hence $\widehat{P}(\lambda)u(\lambda) = f(\lambda)$ for all $\lambda \in \sigma(\Delta)$ i.e.~$Pu = f$, which in turn ensures that $u \in D_P$.

\begin{Prop} \label{prop:closedL2} The operator $P: D_P \sset L^2(\Delta) \rarr L^2(\Delta)$ has a closed range if and only if there exists $C > 0$ such that
  \begin{align}
    \| \widehat{P}(\lambda) \phi \|_{L^2(\Omega)} &\geq C, \quad \forall \phi \in \ker \widehat{P}(\lambda)^\bot, \ \| \phi \|_{L^2(\Omega)} = 1, \ \forall \lambda \in \sigma(\Delta). \label{eq:closedL2}
  \end{align}
  \begin{proof} Assume~\eqref{eq:closedL2} and let $\{ u_\nu \}_{\nu \in \N} \sset D_P$ and $f \in L^2(\Delta)$ be such that $Pu_\nu \to f$ in $L^2(\Delta)$. Since then $\widehat{P}(\lambda)u_\nu(\lambda) \to f(\lambda)$, there exists $u(\lambda) \in E_\lambda$ such that $\widehat{P}(\lambda)u(\lambda) = f(\lambda)$ since $E_\lambda$ is finite dimensional, which also allows us to choose $u(\lambda) \in \ker \widehat{P}(\lambda)^\bot$. This holds for every $\lambda \in \sigma(\Delta)$ and defines $u \dfn (u(\lambda))_{\lambda \in \sigma(\Delta)} \in \Pi(\Delta)$ solving $Pu = f$. But from this and~\eqref{eq:closedL2} one easily derives
    \begin{align*}
      \| f(\lambda) \|_{L^2(\Omega)} = \| \widehat{P}(\lambda) u(\lambda) \|_{L^2(\Omega)} \geq C \| u(\lambda) \|_{L^2(\Omega)}, \quad \forall \lambda \in \sigma(\Delta),
    \end{align*}
    hence $u$ belongs to $L^2(\Delta)$, and then actually to $D_P$.

    Now we assume that~\eqref{eq:closedL2} does not hold: we can find an increasing sequence $\{ \lambda_\nu \}_{\nu \in \N} \sset \sigma(\Delta)$ and $\phi_\nu \in \ker \widehat{P}(\lambda_\nu)^\bot$ with $\| \phi_\nu \|_{L^2(\Omega)} = 1$ for each $\nu \in \N$ satisfying
    \begin{align}
      \| \widehat{P}(\lambda_\nu) \phi_\nu \|_{L^2(\Omega)} &\leq \nu^{-1}, \quad \forall \nu \in \N. \label{eq:notclosedL2}
    \end{align}
    We define $u \in \Pi(\Delta)$ by
    \begin{align*}
      u(\lambda) &\dfn
      \begin{cases}
        \phi_\nu, & \text{if $\lambda = \lambda_\nu$ for some $\nu$} \\
        0, & \text{otherwise},
      \end{cases}
    \end{align*}
    which clearly does not belong to $L^2(\Delta)$, while $f \dfn Pu$ does thanks to~\eqref{eq:notclosedL2}. Now we approximate $u$ by truncations as we did in Proposition~\ref{prop:nice_continuity_dfdo}: we take $\{ F_\nu \}_{\nu \in \N}$ a family of finite subsets of $\sigma(\Delta)$ such that $F_\nu \nearrow \sigma(\Delta)$ and let $u_\nu \in \Sigma(\Delta)$ be defined by $u_\nu(\lambda) \dfn u(\lambda)$ if $\lambda \in F_\nu$ and $u_\nu(\lambda) \dfn 0$ otherwise. But then
    \begin{align*}
      \| Pu_\nu - f \|_{L^2(\Delta)}^2 = \sum_{\sigma(\Delta)} \| \widehat{P}(\lambda)u_\nu(\lambda) - f(\lambda) \|_{L^2(\Omega)}^2 = \sum_{\sigma(\Delta) \setminus F_\nu} \| f(\lambda) \|_{L^2(\Omega)}^2 \to 0 \ \text{as $\nu \to \infty$}
    \end{align*}
    i.e.~$Pu_\nu \to f$ in $L^2(\Delta)$. However, we claim that $Pv = f$ for no $v \in L^2(\Delta)$. Indeed, if $v \in \Pi(\Delta)$ satisfies $Pv = f = Pu$ then $\widehat{P}(\lambda)v(\lambda) = \widehat{P}(\lambda)u(\lambda)$ for all $\lambda \in \sigma(\Delta)$ so $v(\lambda) - u(\lambda) \in \ker \widehat{P}(\lambda)$ for all $\lambda \in \sigma(\Delta)$. Since $u(\lambda) \in \ker \widehat{P}(\lambda)^\bot$ we have
    \begin{align*}
      \| v(\lambda) \|_{L^2(\Omega)}^2 = \| v(\lambda) - u(\lambda) \|_{L^2(\Omega)}^2 + \| u(\lambda) \|_{L^2(\Omega)}^2 \geq \| u(\lambda) \|_{L^2(\Omega)}^2
    \end{align*}
    and since $u$ does not belong to $L^2(\Delta)$ then neither does $v$.
  \end{proof}
\end{Prop}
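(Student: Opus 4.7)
The plan is to exploit the block-diagonal structure of $P$ inherited from the spectral decomposition $L^2(\Delta) \cong \widehat{\bigoplus}_{\sigma(\Delta)} E_\lambda$: since $P$ commutes with $\Delta$, it restricts on each eigenspace to the finite-rank map $\widehat{P}(\lambda) \in \End(E_\lambda)$. Closedness of the range of $P$ should therefore reduce to a uniform quantitative statement about the reduced minimum moduli of the blocks $\widehat{P}(\lambda)|_{\ker \widehat{P}(\lambda)^\bot}$, which is exactly the content of~\eqref{eq:closedL2}.

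For the ($\Leftarrow$) direction, I would assume~\eqref{eq:closedL2} and start from $f$ in the closure of $\ran P$, picking $u_\nu \in D_P$ with $P u_\nu \to f$ in $L^2(\Delta)$. For each fixed $\lambda$, projecting onto $E_\lambda$ yields $\widehat{P}(\lambda) u_\nu(\lambda) \to f(\lambda)$ in a finite-dimensional space, and the uniform lower bound forces the $\ker \widehat{P}(\lambda)^\bot$-components of $u_\nu(\lambda)$ to be Cauchy, hence convergent to a unique $u(\lambda) \in \ker \widehat{P}(\lambda)^\bot$ with $\widehat{P}(\lambda) u(\lambda) = f(\lambda)$ and $\| u(\lambda) \|_{L^2(\Omega)} \leq C^{-1} \| f(\lambda) \|_{L^2(\Omega)}$. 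Summing these estimates in $\lambda$ shows $u \in L^2(\Delta)$, and by construction $Pu = f$, so $u \in D_P$.

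For the ($\Rightarrow$) direction, I would prove the contrapositive and manufacture an $f \in \overline{\ran P} \setminus \ran P$. Negating~\eqref{eq:closedL2} gives a sequence $\lambda_\nu \in \sigma(\Delta)$ and unit vectors $\phi_\nu \in \ker \widehat{P}(\lambda_\nu)^\bot$ with $\| \widehat{P}(\lambda_\nu) \phi_\nu \|_{L^2(\Omega)} \leq 1/\nu$. Define $u \in \Pi(\Delta)$ to be $\phi_\nu$ at $\lambda = \lambda_\nu$ and zero elsewhere; then $u \notin L^2(\Delta)$ (infinitely many unit components), while $f \dfn Pu$ lies in $L^2(\Delta)$ because $\sum 1/\nu^2 < \infty$. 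Finite truncations of $u$ sit in $\Sigma(\Delta) \subset D_P$ and, as in the proof of Proposition~\ref{prop:nice_continuity_dfdo}, their images converge to $f$ in $L^2(\Delta)$, so $f \in \overline{\ran P}$. To rule out $f \in \ran P$, any $v \in \Pi(\Delta)$ with $Pv = f$ must satisfy $v(\lambda_\nu) - \phi_\nu \in \ker \widehat{P}(\lambda_\nu)$, and since $\phi_\nu \perp \ker \widehat{P}(\lambda_\nu)$, the Pythagorean identity gives $\| v(\lambda_\nu) \|^2 \geq 1$, so $v \notin L^2(\Delta)$.

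The only delicate point I expect is the bookkeeping in the ($\Leftarrow$) direction: one needs to systematically pass to the $\ker \widehat{P}(\lambda)^\bot$-component of the would-be preimage at each $\lambda$ (not the full $u_\nu(\lambda)$, which need not be Cauchy), and then verify that the $L^2$-summability of $\{ \| f(\lambda) \|^2 \}$ transfers, via~\eqref{eq:closedL2}, to $L^2$-summability of $\{ \| u(\lambda) \|^2 \}$, placing $u$ in the domain $D_P$ and closing the loop.
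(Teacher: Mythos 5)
Your proposal is correct and follows essentially the same route as the paper: block-diagonalize $P$ over the spectral decomposition, prove sufficiency by building a preimage $u$ with $u(\lambda) \in \ker \widehat{P}(\lambda)^\bot$ and $\| u(\lambda) \|_{L^2(\Omega)} \leq C^{-1} \| f(\lambda) \|_{L^2(\Omega)}$, and prove necessity by assembling the counterexample $u$ from the almost-null unit vectors $\phi_\nu$ and approximating by truncations. The only cosmetic difference is that in the sufficiency direction you obtain $u(\lambda)$ as the limit of the Cauchy sequence of $\ker \widehat{P}(\lambda)^\bot$-components, whereas the paper just invokes closedness of $\ran \widehat{P}(\lambda)$ in the finite-dimensional space $E_\lambda$; both are immediate.
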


\begin{Exa} If $\{ \lambda \in \sigma(\Delta) \st \widehat{P}(\lambda) \neq 0 \}$ is finite then certainly there exists $C > 0$ such that~\eqref{eq:closedL2} holds.
\end{Exa}
\section{Spaces defined by weight functions}

The ideas developed in this and the next sections are very much related to the ones in~\cite{seeley69} and~\cite{fm95} (see also the more recent works~\cite{dr14},~\cite{dr16}). Let $\omega: \sigma(\Delta) \rarr \R_+$ be an arbitrary function. For each $t \in \R$ we define
\begin{align*}
  \D_{\omega, t} (\Delta) \dfn \left\{ a \in \Pi(\Delta) \st \left( e^{t \omega(\lambda)} a (\lambda) \right)_{\lambda \in \sigma(\Delta)} \in L^2(\Delta) \right\} = e^{-t \omega(\Delta)} L^2(\Delta).
\end{align*}
It is a Hilbert space with norm
\begin{align*}
  \| a \|_{\D_{\omega, t} (\Delta)} \dfn \left( \sum_{\sigma(\Delta)} e^{2t \omega(\lambda)} \|a (\lambda)\|_{L^2(\Omega)}^2 \right)^{\frac{1}{2}} = \| e^{t \omega(\Delta)} a \|_{L^2(\Delta)}.
\end{align*}
We shall always assume that $\omega$ is also increasing and unbounded. A standard argument then ensures that the inclusion map $\D_{\omega, t_+} (\Delta) \hookrightarrow \D_{\omega, t} (\Delta)$ is compact whenever $t_+ > t$, for which it is useful to notice first the following general fact: if a sequence $\{ a_\nu \}_{\nu \in \N} \sset \D_{\omega, t} (\Delta)$ converges weakly to zero in $\D_{\omega, t} (\Delta)$ then $\{ a_\nu(\lambda) \}_{\nu \in \N}$ converges to zero in $E_\lambda$ for every $\lambda \in \sigma(\Delta)$ (in other words, the inclusion map $\D_{\omega, t} (\Delta) \hookrightarrow \Pi(\Delta)$ is continuous even when $\D_{\omega, t} (\Delta)$ is endowed with the weak topology).

We thus define
\begin{align*}
  \D_{ (\omega ) } (\Delta) \dfn \bigcap_{t > 0} \D_{\omega, t} (\Delta), & \quad \D_{ \{\omega\} } (\Delta) \dfn \bigcup_{t > 0} \D_{\omega, t} (\Delta)
\end{align*}
and endow them with their projective and injective locally convex topologies, respectively: thanks to our previous remarks, they are a FS and a DFS space, respectively, provided $\omega$ is increasing and unbounded. We will see some concrete examples of these spaces later on, but now we show some standard, but convenient, characterizations of them. These characterizations are conditioned on the behavior of a series, namely
\begin{align}
  \sum_{\sigma(\Delta)} e^{-\rho \omega(\lambda)} & < \infty \label{eq:basic_conv_omega1}
\end{align}
where $\rho > 0$.

\begin{Lem} \label{lem:charac_func_ber_rom} \hfill
  \begin{enumerate}
  \item \label{lem:charac_func_ber_rom_it1} If~\eqref{eq:basic_conv_omega1} holds for some $\rho > 0$ then $a \in \Pi(\Delta)$ belongs to $\D_{ (\omega ) } (\Delta)$ if and only if for every $t > 0$ there exists $C > 0$ such that
    \begin{align}
      \| a(\lambda) \|_{L^2(\Omega)} &\leq C e^{-t\omega(\lambda)}, \quad \forall \lambda \in \sigma(\Delta). \label{eq:ineq_gev_t}
    \end{align}
  \item \label{lem:charac_func_ber_rom_it2} If~\eqref{eq:basic_conv_omega1} holds for every $\rho > 0$ then $a \in \Pi(\Delta)$ belongs to $\D_{ \{\omega \} } (\Delta)$ if and only if there exist $t > 0$ and $C > 0$ such that~\eqref{eq:ineq_gev_t} holds.
  \end{enumerate}
  \begin{proof} Necessity is evident in both cases since if $a \in \D_{\omega, t}(\Delta)$ then
    \begin{align*}
      \| a(\lambda) \|_{L^2(\Omega)} &\leq \| a \|_{\D_{\omega, t}(\Delta)} e^{-t\omega(\lambda)}, \quad \forall \lambda \in \sigma(\Delta).
    \end{align*}
    As for the sufficiency:
    \begin{enumerate}
    \item Let $a \in \Pi(\Delta)$ be as in the statement and let $s > 0$ be arbitrary.  Set $t \dfn s + \rho/2$, where $\rho > 0$ is as in~\eqref{eq:basic_conv_omega1}, and let $C > 0$ be such that~\eqref{eq:ineq_gev_t} holds. Then
    \begin{align}
      \sum_{\sigma(\Delta)} e^{2s \omega(\lambda)} \| a(\lambda) \|_{L^2(\Omega)}^2 \leq C^2 \sum_{\sigma(\Delta)} e^{-\rho \omega(\lambda)} < \infty \label{eq:proof_charac_smooth}
    \end{align}
    hence $a \in \D_{\omega, s}(\Delta)$ for every $s > 0$.
    \item Let $a \in \Pi(\Delta)$ be such that~\eqref{eq:ineq_gev_t} holds for some $t > 0$ and $C > 0$. Let $\rho > 0$ be so small that $s \dfn t - \rho / 2$ is positive. Then~\eqref{eq:proof_charac_smooth} holds thanks to~\eqref{eq:basic_conv_omega1}, hence $a \in \D_{\omega, s}(\Delta)$.
    \end{enumerate}
  \end{proof}
\end{Lem}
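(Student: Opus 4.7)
My plan is to translate the defining condition $a \in \D_{\omega,t}(\Delta)$, which is the summability $\sum_{\sigma(\Delta)} e^{2t\omega(\lambda)}\|a(\lambda)\|_{L^2(\Omega)}^2 < \infty$, into pointwise bounds of the form $\|a(\lambda)\|_{L^2(\Omega)} \le C e^{-t\omega(\lambda)}$, using the convergence hypothesis~\eqref{eq:basic_conv_omega1} as a device to absorb a small amount of exponential growth into a convergent series.

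First I would do necessity, which is uniform between the two items and does not actually need~\eqref{eq:basic_conv_omega1}: if $a \in \D_{\omega,t}(\Delta)$, then one single term of the sum defining the norm gives $e^{2t\omega(\lambda)}\|a(\lambda)\|_{L^2(\Omega)}^2 \le \|a\|_{\D_{\omega,t}(\Delta)}^2$, i.e.\ $\|a(\lambda)\|_{L^2(\Omega)} \le \|a\|_{\D_{\omega,t}(\Delta)} e^{-t\omega(\lambda)}$, so taking $C \dfn \|a\|_{\D_{\omega,t}(\Delta)}$ yields~\eqref{eq:ineq_gev_t} for that particular $t$; this covers the ``only if'' in both~\ref{lem:charac_func_ber_rom_it1} and~\ref{lem:charac_func_ber_rom_it2} after recalling that $\D_{(\omega)}$ is the intersection over all $t > 0$ and $\D_{\{\omega\}}$ is the union over $t > 0$.

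For sufficiency, the key trick is a $\rho/2$-shift. Suppose~\eqref{eq:ineq_gev_t} holds with exponent $t$ and constant $C$; then squaring and multiplying by $e^{2s\omega(\lambda)}$ gives $e^{2s\omega(\lambda)}\|a(\lambda)\|_{L^2(\Omega)}^2 \le C^2 e^{-2(t-s)\omega(\lambda)}$, which is summable as soon as $2(t-s) \ge \rho$ with $\rho$ as in~\eqref{eq:basic_conv_omega1}. For~\ref{lem:charac_func_ber_rom_it1}, an arbitrary $s > 0$ must be reached, so given $s$ I would invoke the hypothesis with $t \dfn s + \rho/2$ to obtain $a \in \D_{\omega,s}(\Delta)$ and conclude $a \in \D_{(\omega)}(\Delta)$. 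For~\ref{lem:charac_func_ber_rom_it2}, $t$ is given and I get to pick $s$: since~\eqref{eq:basic_conv_omega1} is available for \emph{every} $\rho > 0$, I would choose $\rho < 2t$ so that $s \dfn t - \rho/2 > 0$, yielding $a \in \D_{\omega,s}(\Delta) \sset \D_{\{\omega\}}(\Delta)$. No step looks like a serious obstacle — the whole content is bookkeeping of exponents — but the one place to be careful is in item~\ref{lem:charac_func_ber_rom_it2}, where one must verify that the strength of the hypothesis (convergence for \emph{all} $\rho$) is exactly what allows $s$ to be chosen strictly positive regardless of how small $t$ is.
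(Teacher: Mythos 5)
Your proposal is correct and follows essentially the same route as the paper: single-term extraction of the norm for necessity, and the $\rho/2$-shift of exponents (setting $t = s + \rho/2$ in item~(1), and choosing $\rho < 2t$ so that $s = t - \rho/2 > 0$ in item~(2)) for sufficiency. Nothing to add.
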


In order to study regularity properties of operators acting on these spaces (similar to hypoellipticity) it is necessary to provide nice characterizations also for their associated spaces of generalized functions i.e.~their duals. For $t \in \R$ given, $\D_{\omega, t} (\Delta)$ is a Hilbert space so Riesz Theorem tells us that the map
\begin{align*}
  \TR{\flat_t}{v}{\D_{\omega, t}(\Delta)}{\langle \cdot, \overline{v} \rangle_{\D_{\omega, t}(\Delta)}}{\D_{\omega, t}(\Delta)'}
\end{align*}
is a linear isomorphism, and indeed an isometry for the induced Hermitian inner product on $\D_{\omega, t}(\Delta)'$. Moreover, the application
\begin{align*}
  \TR{e^{t \omega(\Delta)}}{a}{\Pi(\Delta)}{\left( e^{t \omega(\lambda)} a(\lambda) \right)_{\lambda \in \sigma(\Delta)}}{\Pi(\Delta)}
\end{align*}
maps $\D_{\omega, t}(\Delta)$ isometrically onto $L^2(\Delta) = \D_{\omega, 0}(\Delta)$, and its inverse is precisely $e^{-t \omega(\Delta)}$.

Denoting the transpose of $e^{t \omega(\Delta)}: \D_{\omega, t}(\Delta) \rarr \D_{\omega, 0}(\Delta)$ by $\transp{e}^{t \omega(\Delta)}: \D_{\omega, 0}(\Delta)' \rarr \D_{\omega, t}(\Delta)'$, one easily proves the commutativity of the diagram
\begin{align}
  \xymatrix{
  \D_{\omega, t}(\Delta) \ar[r]^{e^{t \omega(\Delta)}} \ar[d]_{\flat_t} & \D_{\omega, 0}(\Delta) \ar[d]^{\flat_0} \\
  \D_{\omega, t}(\Delta)'        & \ar[l]^{\transp{e}^{t \omega(\Delta)}} \D_{\omega, 0}(\Delta)' 
                                   } \label{eq:diagram_eqts}
\end{align}
which ensures, in particular, that $\transp{e}^{t \omega(\Delta)}: \D_{\omega, 0}(\Delta)' \rarr \D_{\omega, t}(\Delta)'$ is an isometry.
\begin{Lem} \label{lem:char_dual_ult2} For each $t \in \R$ the bilinear pairing
  \begin{align*}
    (u, v) \in \D_{\omega, t}(\Delta) \times \D_{\omega, -t} (\Delta) &\longmapsto \sum_{\sigma(\Delta)} \langle u(\lambda), \overline{v(\lambda)} \rangle_{L^2(\Omega)} \in \C
  \end{align*}
  gives rise to an isometry $\D_{\omega, -t}(\Delta) \cong \D_{\omega, t}(\Delta)'$, which is precisely $\transp{e}^{t \omega(\Delta)} \circ \flat_0 \circ e^{-t \omega(\Delta)}$.
  \begin{proof} It is clear that the series above converges since for $u \in \D_{\omega, t}(\Delta)$ and $v \in \D_{\omega, -t}(\Delta)$ we have
    \begin{align*}
      \sum_{\sigma(\Delta)} \langle u(\lambda), \overline{v(\lambda)} \rangle_{L^2(\Omega)} = \sum_{\sigma(\Delta)} \left \langle e^{t \omega(\lambda)} u(\lambda), \overline{e^{-t \omega(\lambda)} v(\lambda)} \right \rangle_{L^2(\Omega)} = \left \langle e^{t \omega(\Delta)} u, \overline{e^{-t \omega(\Delta)} v} \right \rangle_{L^2(\Delta)}
    \end{align*}
    from which it is also clear that for every $v \in \D_{\omega, -t}(\Delta)$ the functional
    \begin{align*}
      u \in \D_{\omega, t}(\Delta) &\longmapsto \sum_{\sigma(\Delta)} \langle u(\lambda), \overline{v(\lambda)} \rangle_{L^2(\Omega)} \in \C
    \end{align*}
    is continuous. By~\eqref{eq:diagram_eqts} we have a commutative diagram of isometries
    \begin{align*}
      \xymatrix{
        \D_{\omega, t}(\Delta) \ar[r]^{e^{t \omega(\Delta)}} \ar[d]_{\flat_t} & \D_{\omega, 0}(\Delta) \ar[r]^{e^{t \omega(\Delta)}} \ar[d]^{\flat_0}  & \D_{\omega, -t}(\Delta) \ar[d]^{\flat_{-t}}\\
        \D_{\omega, t}(\Delta)'        & \ar[l]^{\transp{e}^{t \omega(\Delta)}} \D_{\omega, 0}(\Delta)' & \ar[l]^{\transp{e}^{t \omega(\Delta)}} \D_{\omega, -t}(\Delta)'
      }
    \end{align*}
    so the map $\transp{e}^{t \omega(\Delta)} \circ \flat_0 \circ e^{-t \omega(\Delta)}: \D_{\omega, -t}(\Delta) \rarr  \D_{\omega, t}(\Delta)'$ is an isometry: let $\xi \in \D_{\omega, t}(\Delta)'$ and let $v \in \D_{\omega, -t}(\Delta)$ be such that $\transp{e}^{t \omega(\Delta)}  \flat_0  e^{-t \omega(\Delta)} v = \xi$. For every $u \in \D_{\omega, t}(\Delta)$ we have
    \begin{align*}
      \sum_{\sigma(\Delta)} \langle u(\lambda), \overline{v(\lambda)} \rangle_{L^2(\Omega)} = \left \langle e^{t \omega(\Delta)} u, \overline{e^{-t \omega(\Delta)} v} \right \rangle_{L^2(\Delta)} = \left \langle \flat_0 e^{-t \omega(\Delta)} v, e^{t \omega(\Delta)} u \right \rangle = \left \langle \transp{e}^{t \omega(\Delta)}\flat_0 e^{-t \omega(\Delta)} v, u \right \rangle
    \end{align*}
    which equals $\left \langle \xi, u \right \rangle$.
  \end{proof}
\end{Lem}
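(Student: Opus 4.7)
The plan is to reduce the statement to the Riesz representation theorem on the Hilbert space $L^2(\Delta) = \D_{\omega,0}(\Delta)$ and then transport it along the weight-multiplication isometries $e^{\pm t\omega(\Delta)}$ that, by the very definition of the norms $\|\cdot\|_{\D_{\omega,s}(\Delta)}$, identify the weighted spaces with each other. The diagram~\eqref{eq:diagram_eqts} already displays the composition $\transp{e}^{t\omega(\Delta)}\circ\flat_0\circ e^{-t\omega(\Delta)}$ as a chain of isometries $\D_{\omega,-t}(\Delta)\to\D_{\omega,0}(\Delta)\to\D_{\omega,0}(\Delta)'\to\D_{\omega,t}(\Delta)'$; the task is to check that this composition coincides with the map induced by the bilinear pairing in the statement.

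First I would verify that the series converges and produces a continuous functional. Rewriting each term as $\langle u(\lambda),\overline{v(\lambda)}\rangle_{L^2(\Omega)} = \langle e^{t\omega(\lambda)}u(\lambda),\,\overline{e^{-t\omega(\lambda)}v(\lambda)}\rangle_{L^2(\Omega)}$ and summing over $\sigma(\Delta)$ recognizes the result as the single $L^2(\Delta)$ inner product $\langle e^{t\omega(\Delta)}u,\,\overline{e^{-t\omega(\Delta)}v}\rangle_{L^2(\Delta)}$; Cauchy--Schwarz bounds it by $\|u\|_{\D_{\omega,t}(\Delta)}\|v\|_{\D_{\omega,-t}(\Delta)}$, so for every fixed $v\in\D_{\omega,-t}(\Delta)$ the functional $u\mapsto\sum_{\sigma(\Delta)}\langle u(\lambda),\overline{v(\lambda)}\rangle_{L^2(\Omega)}$ is an element of $\D_{\omega,t}(\Delta)'$ with norm at most $\|v\|_{\D_{\omega,-t}(\Delta)}$.

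Next I would identify this functional with $\transp{e}^{t\omega(\Delta)}\flat_0 e^{-t\omega(\Delta)}v$ by a direct computation: for any test $u\in\D_{\omega,t}(\Delta)$, transposing moves $e^{t\omega(\Delta)}$ across the duality bracket to give $\langle\transp{e}^{t\omega(\Delta)}\flat_0 e^{-t\omega(\Delta)}v,\,u\rangle=\langle\flat_0 e^{-t\omega(\Delta)}v,\,e^{t\omega(\Delta)}u\rangle$, which by the definition of $\flat_0$ equals $\langle e^{t\omega(\Delta)}u,\,\overline{e^{-t\omega(\Delta)}v}\rangle_{L^2(\Delta)}$, precisely the series above. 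Since $e^{\pm t\omega(\Delta)}$ and $\transp{e}^{t\omega(\Delta)}$ are isometries by construction (and by~\eqref{eq:diagram_eqts}) and $\flat_0$ is the Riesz isometry on $L^2(\Delta)$, the composition is a bijective isometry $\D_{\omega,-t}(\Delta)\to\D_{\omega,t}(\Delta)'$, establishing the claim.

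No step here presents a real obstacle: the argument is essentially unpacking the commutative diagram~\eqref{eq:diagram_eqts} one column further to the right. The only mild subtlety is consistent bookkeeping of the complex conjugation carried by the sesquilinear Riesz identification, which must be tracked so that the transpose of $e^{t\omega(\Delta)}$ against the pairing really matches the composition in \eqref{eq:diagram_eqts}.
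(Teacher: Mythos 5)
Your proposal is correct and follows essentially the same route as the paper's proof: rewrite the pairing as the $L^2(\Delta)$ inner product $\langle e^{t\omega(\Delta)}u,\overline{e^{-t\omega(\Delta)}v}\rangle_{L^2(\Delta)}$ to get convergence and continuity, extend the diagram~\eqref{eq:diagram_eqts} by one more column of isometries, and verify by the same transposition computation that the pairing-induced map coincides with $\transp{e}^{t\omega(\Delta)}\circ\flat_0\circ e^{-t\omega(\Delta)}$. No substantive differences.
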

Now, for $t_+ \geq t$ another tedious (but easy) computation shows that the diagram
\begin{align*}
  \xymatrix{
  \D_{\omega, -t}(\Delta) \ar[r]^{\jmath} \ar[d]_{\cong} & \D_{\omega, -t_+}(\Delta) \ar[d]^{\cong} \\
  \D_{\omega, t}(\Delta)' \ar[r]_{\transp{\imath}}       & \D_{\omega, t_+}(\Delta)' 
                                                      }
\end{align*}
is commutative: here $\imath: \D_{\omega, t_+}(\Delta) \hookrightarrow \D_{\omega, t}(\Delta)$ and $\jmath: \D_{\omega, -t}(\Delta) \hookrightarrow \D_{\omega, -t_+}(\Delta)$ are the respective inclusion maps and the downward arrows are the isometries in Lemma~\ref{lem:char_dual_ult2}.

We define the new spaces
\begin{align*}
  \D'_{ (\omega ) }(\Delta) \dfn \bigcup_{t > 0} \D_{\omega, -t} (\Delta), & \quad \D'_{ \{\omega\} }(\Delta) \dfn \bigcap_{t > 0} \D_{\omega, -t} (\Delta)
\end{align*}
endowed with the locally convex injective and projective topologies, respectively, which turn them into a DFS and a FS space, respectively, if $\omega$ is assumed increasing and unbounded. It follows from Lemma~\ref{lem:char_dual_ult2} and the preceding digression, together with~\cite[Theorems~11 and 12]{kom67}, that in that case the bilinear pairing
\begin{align*}
  (u, v) \in \D_{\star}(\Delta) \times \D'_{\star} (\Delta) &\longmapsto \sum_{\sigma(\Delta)} \langle u(\lambda), \overline{v(\lambda)} \rangle_{L^2(\Omega)} \in \C
\end{align*}
turns $\D_{\star}(\Delta)$ and $\D'_{\star} (\Delta)$ into the strong dual of one another (where $\star = (\omega), \{ \omega \}$).

The proof of the following characterization follows the same lines as in Lemma~\ref{lem:charac_func_ber_rom}. We leave the details for the reader.

\begin{Lem} \label{lem:charac_distr} \hfill
  \begin{enumerate}
  \item \label{lem:charac_distr_it1} If~\eqref{eq:basic_conv_omega1} holds for some $\rho > 0$ then $a \in \Pi(\Delta)$ belongs to $\D'_{ (\omega ) } (\Delta)$ if and only if there exist $t > 0$ and $C > 0$ such that
    \begin{align}
      \| a(\lambda) \|_{L^2(\Omega)} &\leq C e^{t\omega(\lambda)}, \quad \forall \lambda \in \sigma(\Delta). \label{eq:ineq_gevult_t}
    \end{align}
  \item \label{lem:charac_distr_it2} If~\eqref{eq:basic_conv_omega1} holds for every $\rho > 0$ then $a \in \Pi(\Delta)$ belongs to $\D'_{ \{\omega \} } (\Delta)$ if and only if for every $t > 0$ there exists $C > 0$ such that~\eqref{eq:ineq_gevult_t} holds.
  \end{enumerate}
\end{Lem}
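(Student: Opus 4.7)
The plan is to mirror the argument of Lemma~\ref{lem:charac_func_ber_rom} with the sign of the parameter flipped, since now $\D'_{(\omega)}(\Delta)$ and $\D'_{\{\omega\}}(\Delta)$ are built from the spaces $\D_{\omega,-t}(\Delta)$ (with $t>0$) rather than $\D_{\omega,t}(\Delta)$. Necessity is immediate in both cases: if $a\in \D_{\omega,-t}(\Delta)$ then, since every term of a convergent nonnegative series is bounded by the sum,
\begin{align*}
 \| a(\lambda) \|_{L^2(\Omega)} \;\leq\; \| a \|_{\D_{\omega,-t}(\Delta)}\, e^{t\omega(\lambda)}, \quad \forall \lambda\in\sigma(\Delta),
\end{align*}
so the bound~\eqref{eq:ineq_gevult_t} holds with $C:=\|a\|_{\D_{\omega,-t}(\Delta)}$. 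For \eqref{lem:charac_distr_it1} this gives a single $(t,C)$; for \eqref{lem:charac_distr_it2}, since $a$ lies in every $\D_{\omega,-t}(\Delta)$, the bound holds for \emph{every} $t>0$ with a constant $C=C_t$.

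For sufficiency in \eqref{lem:charac_distr_it1}, assume $\| a(\lambda)\|_{L^2(\Omega)}\leq C e^{t\omega(\lambda)}$ for some $t,C>0$, and let $\rho>0$ be as in~\eqref{eq:basic_conv_omega1}. Set $s:=t+\rho/2>t$. Then
\begin{align*}
 \| a \|_{\D_{\omega,-s}(\Delta)}^2 \;=\; \sum_{\sigma(\Delta)} e^{-2s\omega(\lambda)} \| a(\lambda)\|_{L^2(\Omega)}^2 \;\leq\; C^2 \sum_{\sigma(\Delta)} e^{-\rho\omega(\lambda)} \;<\;\infty,
\end{align*}
hence $a\in\D_{\omega,-s}(\Delta)\subset \D'_{(\omega)}(\Delta)$.

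For sufficiency in \eqref{lem:charac_distr_it2}, assume that for every $t>0$ there exists $C_t>0$ such that~\eqref{eq:ineq_gevult_t} holds; we must show $a\in\D_{\omega,-s}(\Delta)$ for \emph{every} $s>0$. Given such $s$, pick any $t\in(0,s)$ and set $\rho:=2(s-t)>0$. By hypothesis \eqref{eq:basic_conv_omega1} holds for this $\rho$, and
\begin{align*}
 \| a \|_{\D_{\omega,-s}(\Delta)}^2 \;=\; \sum_{\sigma(\Delta)} e^{-2s\omega(\lambda)} \| a(\lambda)\|_{L^2(\Omega)}^2 \;\leq\; C_t^2 \sum_{\sigma(\Delta)} e^{-2(s-t)\omega(\lambda)} \;<\;\infty.
\end{align*}
Since $s>0$ was arbitrary, $a\in\bigcap_{s>0}\D_{\omega,-s}(\Delta)=\D'_{\{\omega\}}(\Delta)$. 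There is no real obstacle here: the argument is a verbatim dualization of Lemma~\ref{lem:charac_func_ber_rom}, using only the trivial identity $e^{-2s\omega}\cdot e^{2t\omega}=e^{-\rho\omega}$ to trade one summability hypothesis for the other, and the fact that the norm of $\D_{\omega,-t}(\Delta)$ bounds each component times $e^{t\omega(\lambda)}$.
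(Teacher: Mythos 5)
Your proof is correct and is precisely the dualization of Lemma~\ref{lem:charac_func_ber_rom} that the paper has in mind --- the paper itself only remarks that the proof ``follows the same lines'' as that lemma and leaves the details to the reader. The sign-flipped bookkeeping (taking $s = t + \rho/2$ in part~\eqref{lem:charac_distr_it1} and $\rho = 2(s-t)$ in part~\eqref{lem:charac_distr_it2}) is exactly what is needed.
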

\section{Spaces defined by weight functions: examples} \label{sec:examples}

\subsection{Smooth functions}

Recall that $\cinfty(\Omega)$ stands for the space of all complex-valued smooth functions on $\Omega$, which we always consider endowed with its natural Fr{\'e}chet space topology: its dual space, $\D'(\Omega)$, is the space of Schwartz distributions in $\Omega$. Unless explicitly stated otherwise, we consider it endowed with the strong dual topology. The volume form $\dd V$ induced by our Riemannian metric allows us to embed all the classical spaces of functions in $\D'(\Omega)$: we interpret each $f \in L^1(\Omega)$ as a distribution on $\Omega$ by letting it act on a test-function $\phi \in \cinfty(\Omega)$ as
\begin{align}
  \langle f, \phi \rangle &\dfn \int_\Omega f \phi \ \dd V. \label{eq:integ_against}
\end{align}
Because $\Delta$ is real and symmetric, for $f, \phi \in \cinfty(\Omega)$ it holds that
\begin{align*}
  \langle \Delta f, \phi \rangle = \langle \Delta f, \overline{\phi} \rangle_{L^2(\Omega)} = \langle f, \Delta \overline{\phi} \rangle_{L^2(\Omega)} = \langle f, \overline{\Delta \phi} \rangle_{L^2(\Omega)} = \langle f, \Delta \phi \rangle
\end{align*}
which we then take for definition of $\Delta f \in \D'(\Omega)$ when $f \in \D'(\Omega)$. More generally, if $P \in \C[\lambda]$ then
\begin{align*}
  \langle P(\Delta) f, \phi \rangle &= \langle f, P(\Delta) \phi \rangle, \quad \forall \phi \in \cinfty(\Omega),
\end{align*}
which then equals $P(\lambda) \langle f, \phi \rangle$ when $\phi \in E_\lambda$ for some $\lambda \in \sigma(\Delta)$.

Now if $f \in \D'(\Omega)$ then $f|_{E_\lambda} \in E_\lambda^*$, and we denote by $\mathcal{F}_\lambda (f)$ the unique element in $E_\lambda$ that satisfies
\begin{align*}
  \langle \mathcal{F}_\lambda (f), \phi \rangle_{L^2(\Omega)} &= \langle f, \overline{\phi} \rangle, \quad \forall \phi \in E_\lambda.
\end{align*}
Therefore, if we select $\{ \phi^\lambda_j \st 1 \leq j \leq \dim E_\lambda \}$ an orthonormal basis for $E_\lambda$ then
\begin{align*}
  \mathcal{F}_\lambda (f) = \sum_{j = 1}^{d_\lambda} \langle \mathcal{F}_\lambda (f), \phi^\lambda_j \rangle_{L^2(\Omega)} \ \phi^\lambda_j = \sum_{j = 1}^{d_\lambda} \langle f, \overline{\phi^\lambda_j} \rangle \ \phi^\lambda_j
\end{align*}
(where $d_\lambda \dfn \dim E_\lambda$) which agrees with the original definition of $\mathcal{F}_\lambda$ as the orthogonal projection of $L^2(\Omega)$ onto $E_\lambda$. We have thus defined a linear map
\begin{align*}
  \TR{\mathcal{F}}{f}{\D'(\Omega)}{ \left( \mathcal{F}_\lambda(f) \right)_{\lambda \in \sigma(\Delta)}}{\Pi(\Delta)}
\end{align*}
that extends in a natural way the original definition of $\mathcal{F}$. If $P \in \C[\lambda]$ then for every $\phi \in E_\lambda$ we have
\begin{align*}
  \langle \mathcal{F}_\lambda \left( P(\Delta) f \right), \phi \rangle_{L^2(\Omega)} = \langle P(\Delta) f, \overline{\phi} \rangle = P(\lambda) \langle f, \overline{\phi} \rangle = P(\lambda) \langle \mathcal{F}_\lambda (f), \phi \rangle_{L^2(\Omega)} 
\end{align*}
which proves that
\begin{align}
  \mathcal{F}_\lambda \left( P(\Delta) f \right) &= P(\lambda) \mathcal{F}_\lambda (f), \quad \forall \lambda \in \sigma(\Delta). \label{eq:powers_Delta}
\end{align}

For each $k \in \Z_+$ we define
\begin{align*}
  \sob^k(\Omega) &\dfn \left\{ u \in L^2(\Omega) \st (I + \Delta)^k u \in L^2(\Omega) \right\}
\end{align*}
which endowed with the norm
\begin{align*}
  \| u \|_{\sob^k(\Omega)} &\dfn \left\| (I + \Delta)^k u \right\|_{L^2(\Omega)}
\end{align*}
is a Hilbert space. Since $I + \Delta$ is a second-order elliptic LPDO with real-analytic coefficients which is, moreover, injective we have that
\begin{align*}
  \cinfty(\Omega) &= \bigcap_{k \in \Z_+} \sob^k(\Omega)
\end{align*}
as locally convex spaces, where we endow the right-hand side with the projective limit topology: this is a FS space. We define the function $\omega_\infty: \sigma(\Delta) \rarr \R_+$ by
\begin{align*}
  \omega_\infty(\lambda) &\dfn \log (1 + \lambda), \quad \lambda \in \sigma(\Delta),
\end{align*}
which is increasing, unbounded and
\begin{align}
  \sum_{\sigma(\Delta)} e^{-2N \omega_\infty(\lambda)} = \sum_{\sigma(\Delta)} (1 + \lambda)^{-2N} < \infty  \label{eq:series_smooth}
\end{align}
thanks to~\eqref{eq:weyl}: that is, \eqref{eq:basic_conv_omega1} holds for $\omega = \omega_\infty$ when $\rho = 2N$. Take, for each $t \in \R$,
\begin{align*}
  \sob^t(\Delta) \dfn \left\{ a \in \Pi(\Delta) \st \left( (1 + \lambda)^t a(\lambda) \right)_{\lambda \in \sigma(\Delta)} \in L^2(\Delta) \right\} = \D_{\omega_\infty, t} (\Delta)
\end{align*}
which, we recall, is a Hilbert space with norm
\begin{align*}
  \| a \|_{\D_{\omega_\infty, t} (\Delta)} &= \left( \sum_{\sigma(\Delta)} (1 + \lambda)^{2t} \| a(\lambda) \|_{L^2(\Omega)}^2 \right)^{\frac{1}{2}}.
\end{align*}
Applying~\eqref{eq:powers_Delta} to the polynomial $P(\lambda) \dfn (1 + \lambda)^k = e^{k \log(1 + \lambda)}$ one gets a commutative diagram
\begin{align*}
  \xymatrix{
    \sob^k(\Omega) \ar[r]^{\mathcal{F}} \ar[d]_{(I + \Delta)^k} & \Pi(\Delta)  \\
    L^2(\Omega) \ar[r]_{\mathcal{F}}       &  L^2(\Delta) \ar[u]_{e^{-k \omega_\infty(\Delta)}}
  }
\end{align*}
which proves that $\mathcal{F}$ maps $\sob^k(\Omega)$ isometrically onto $e^{-k \omega_\infty(\Delta)}(L^2(\Delta)) = \sob^k(\Delta)$, and one also proves easily that for $f \in \sob^k(\Omega)$ we have~\eqref{eq:abst_fourier_exp} with convergence in $\sob^k(\Omega)$. The space
\begin{align*}
  \cinfty(\Delta) \dfn \bigcap_{k \in \Z_+} \sob^k(\Delta)
\end{align*}
endowed with the corresponding projective limit topology is equal to $\D_{(\omega_\infty)} (\Delta)$ (because $\Z_+$ is cofinal in $\R_+$) and contains the image of $\cinfty(\Omega)$ under $\mathcal{F}$ thanks to our previous remarks. The induced map $\mathcal{F}: \cinfty(\Omega) \rarr \cinfty(\Delta)$ is then injective (because $\mathcal{F}$ is injective in $L^2(\Omega)$) and continuous by the Closed Graph Theorem. It is, moreover, onto: if $a \in \cinfty(\Delta)$ then, for each $k \in \Z_+$:
\begin{align*}
  a \in \sob^k(\Delta) \Longrightarrow \text{$\exists f_k \in \sob^k(\Delta)$ such that $\mathcal{F}(f_k) = a$} \Longrightarrow \mathcal{F}(f_k) = \mathcal{F}(f_j), \quad \forall j, k \in \Z_+.
\end{align*}
By injectivity we have $f_k = f_0$ for all $k \in \Z_+$, hence $f_0 \in \sob^k(\Omega)$ for all $k \in \Z_+$ i.e.~$f_0 \in \cinfty(\Omega)$ and $\mathcal{F}(f_0) = a$. By the Open Mapping Theorem, $\mathcal{F}: \cinfty(\Omega) \rarr \cinfty(\Delta)$ is a topological isomorphism.

Now, the space
\begin{align*}
  \D'(\Delta) \dfn \bigcup_{k \in \Z_+} \sob^{-k}(\Delta),
\end{align*}
which we endow with the corresponding injective limit topology, is equal to $\D'_{(\omega_\infty)} (\Delta)$ as topological vector spaces. We claim that this set is precisely the image of $\D'(\Omega)$ under $\mathcal{F}$, and the induced map $\mathcal{F}: \D'(\Omega) \rarr \D'(\Delta)$ is a topological isomorphism.

Indeed, notice that since $\mathcal{F}: \cinfty(\Omega) \rarr \cinfty(\Delta)$ is a topological isomorphism then so is its transpose $\transp{\mathcal{F}}: \D'(\Delta) \rarr \D'(\Omega)$. For $a \in \D'(\Delta)$ let $f \dfn \transp{\mathcal{F}}(a) \in \D'(\Omega)$, hence $\mathcal{F}(f) \in \Pi(\Delta)$: for $\phi \in E_\lambda$ (for some $\lambda \in \sigma(\Delta)$) we have, by definition of $\mathcal{F}_\lambda(f)$,
\begin{align*}
  \left \langle \mathcal{F}_\lambda(f), \phi \right \rangle_{L^2(\Omega)} = \langle f, \overline{\phi} \rangle = \langle \transp{\mathcal{F}}(a), \overline{\phi} \rangle = \langle a, \mathcal{F}(\overline{\phi}) \rangle = \langle a(\lambda), \phi \rangle_{L^2(\Omega)}
\end{align*}
by means of the duality pairing~\eqref{eq:fund_pairing_pisigma}. We have thus proved that $\mathcal{F}(f) = a$, and since every $f \in \D'(\Omega)$ is of that form we have proved that $\mathcal{F}$ maps $\D'(\Omega)$ into $\D'(\Delta)$. In that sense, we actually showed that $\mathcal{F} \circ \transp{\mathcal{F}}$ is the identity map on $\D'(\Delta)$ i.e.~$\mathcal{F} = (\transp{\mathcal{F}})^{-1}$ is a topological isomorphism.
\subsection{Gevrey functions}

This is very close (and essentially equivalent) to what was done in~\cite{fm95}, but a bit simplified by our use of $L^2$ norms. We start by recalling a little bit about Gevrey theory: given $U \sset \R^N$ an open set and $s \geq 1$, a function $f \in \cinfty(U)$ belongs to the Gevrey space $\gev^s(U)$ if for every compact set $K \sset U$ there exist $h > 0$ and $C > 0$ such that
\begin{align*}
  \sup_K |\del^\alpha f | &\leq C h^{|\alpha|} \alpha!^s, \quad \forall \alpha \in \Z_+^N.
\end{align*}
In particular, $\gev^1(U)$ is simply the space of real-analytic functions in $U$.

The assignment $U \mapsto \gev^s(U)$ is a sheaf that is preserved by real-analytic local diffeomorphisms, hence the notion of Gevrey function can be intrinsically defined in a real-analytic manifold: we denote by $\gev^s(\Omega)$ the space of Gevrey functions on $\Omega$. Since $I + \Delta$ is a second-order elliptic LPDO with real-analytic coefficients in $\Omega$, results in~\cite{kn62},~\cite{bcm79} (see also~\cite[Theorem~2.4, Chapter~8]{lm_nhbv3}) ensure that a function $f \in \cinfty(\Omega)$ belongs to $\gev^s(\Omega)$ if and only if there exist constants $C > 0$ and $h > 0$ such that
\begin{align*}
  \| (I + \Delta)^k f \|_{L^2(\Omega)} &\leq C h^{2k} (2k)!^{s}, \quad \forall k \in \Z_+.
\end{align*}
For each $h > 0$ we define the vector space
\begin{align*}
  \gev^{s,h}(\Omega) &\dfn \left\{ f \in \gev^s(\Omega) \st \| f \|_{\gev^{s,h}(\Omega)} < \infty \right\}
\end{align*}
where
\begin{align*}
  \| f \|_{\gev^{s,h}(\Omega)} &\dfn \sup_k h^{-2k} (2k)!^{-s} \| (I + \Delta)^k f \|_{L^2(\Omega)}
\end{align*}
which turns $\gev^{s,h}(\Omega)$ into a Banach space. The inclusion map $\gev^{s, h}(\Omega) \hookrightarrow \cinfty(\Omega)$ is continuous since for each $k \in \Z_+$ we have
\begin{align*}
  \| f \|_{\sob^k(\Omega)} &\leq h^{2k} (2k)!^{s} \| f \|_{\gev^{s, h}(\Omega)}, \quad \forall f \in \gev^{s, h}(\Omega).
\end{align*}
Moreover, for $h < h_+$ the inclusion map $\gev^{s, h}(\Omega) \hookrightarrow \gev^{s, h_+}(\Omega)$ is clearly continuous, and also compact by standard arguments: the locally convex injetive limit topology on
\begin{align*}
  \gev^{s}(\Omega) &= \bigcup_{h > 0} \gev^{s,h}(\Omega)
\end{align*}
turns it into a DFS space. The function $\omega_s: \sigma(\Delta) \rarr \R_+$ defined by
\begin{align*}
  \omega_s(\lambda) &\dfn (1 + \lambda)^{\frac{1}{2s}}, \quad \lambda \in \sigma(\Delta).
\end{align*}
is increasing and unbounded, and moreover
\begin{align}
  \sum_{\sigma(\Delta)} e^{-\rho \omega_s(\lambda)} = \sum_{\sigma(\Delta)} e^{-\rho (1 + \lambda)^{\frac{1}{2s}}}  < \infty, \quad \forall \rho > 0, \label{eq:series_gevrey}
\end{align}
which follows easily from the next remark (it will be useful later on):
\begin{Rem} \label{rem:compares_smooth_gevrey} Given $s > 0$ and $t, t' \in \R$ with $t < 0$ there exists $C > 0$ such that
  \begin{align*}
    (1 + \lambda)^{t'} &\geq C e^{t(1 + \lambda)^{\frac{1}{s}}}, \quad \forall \lambda \in \sigma(\Delta).
  \end{align*}
\end{Rem}
In particular, \eqref{eq:series_gevrey} follows from~\eqref{eq:series_smooth}. For $t \in \R$ we define
\begin{align*}
  \gev^{s, t}(\Delta) \dfn \left\{ a \in \Pi(\Delta) \st \left( e^{2t (1 + \lambda)^{\frac{1}{2s}}} a(\lambda) \right)_{\lambda \in \sigma(\Delta)} \in L^2(\Delta) \right\} = \D_{\omega_s, t}(\Delta)
\end{align*}
endowed with the Hilbert space norm
\begin{align*}
  \| a \|_{\D_{\omega_s, t}(\Delta)} &= \left( \sum_{\sigma(\Delta)}  e^{2t (1 + \lambda)^{\frac{1}{2s}}} \| a(\lambda) \|_{L^2(\Omega)}^2 \right)^{\frac{1}{2}}.
\end{align*}
As we have seen, the space
\begin{align*}
  \gev^{s}(\Delta) \dfn \bigcup_{t > 0} \gev^{s, t}(\Delta) = \D_{\{\omega_s\}}(\Delta)
\end{align*}
is thus a DFS space when endowed with the locally convex injective limit topology.

We claim that $\mathcal{F}$ maps $\gev^s(\Omega)$ onto $\gev^s(\Delta)$. Indeed, if $f \in \gev^{s}(\Omega)$ we have, for $\lambda \in \sigma(\Delta)$ and $k \in \Z_+$:
\begin{align*}
  \| (1 + \lambda)^k \mathcal{F}_\lambda(f) \|_{L^2(\Omega)} \leq \| (I + \Delta)^k f \|_{L^2(\Omega)} \leq \| f \|_{\gev^{s, h}(\Omega)} h^{2k} (2k)!^s
\end{align*}
hence
\begin{align*}
  \left( \frac{ \left( (1 + \lambda)^{\frac{1}{2s}} (1/h)^{\frac{1}{s}} \right)^{2k}}{(2k)!} \right)^s \| \mathcal{F}_\lambda(f) \|_{L^2(\Omega)} &\leq \| f \|_{\gev^{s, h}(\Omega)}
\end{align*}
or:
\begin{align*}
  \frac{ \left( (1 + \lambda)^{\frac{1}{2s}} (1/h)^{\frac{1}{s}} (1 / \sqrt{2}) \right)^{2k}}{(2k)!} \| \mathcal{F}_\lambda(f) \|_{L^2(\Omega)}^{\frac{1}{s}} &\leq 2^{-k} \| f \|_{\gev^{s, h}(\Omega)}^{\frac{1}{s}}.
\end{align*}
Summing over $k \in \Z_+$ on both sides and using the basic estimate
\begin{align*}
  e^{\frac{r}{2}} &\leq \frac{2}{\sqrt{e}} \sum_{k = 0}^\infty \frac{r^{2k}}{(2k)!}, \quad \forall r \geq 0,
\end{align*}
(which we apply with $r \dfn (1 + \lambda)^{\frac{1}{2s}} (1/h)^{\frac{1}{s}} / \sqrt{2}$) one gets
\begin{align*}
  e^{t(1 + \lambda)^{\frac{1}{2s}}} \| \mathcal{F}_\lambda(f) \|_{L^2(\Omega)} &\leq (4 / \sqrt{e})^s \| f \|_{\gev^{s, h}(\Omega)}, \quad \forall \lambda \in \sigma(\Delta),
\end{align*}
where $t \dfn s (1/h)^{\frac{1}{s}} / (2 \sqrt{2}) > 0$, hence $\mathcal{F}(f) \in \D_{\{\omega_s\}}(\Delta) = \gev^s(\Delta)$ by Lemma~\ref{lem:charac_func_ber_rom}\eqref{lem:charac_func_ber_rom_it2}.

The induced map $\mathcal{F}: \gev^s(\Omega) \rarr \gev^s(\Delta)$ is clearly injective, and also onto: for $t > 0$ let $a \in \gev^{s,t}(\Delta) \sset \cinfty(\Delta)$, hence there exists $f \in \cinfty(\Omega)$ such that $\mathcal{F}(f) = a$. For every $k \in \Z_+$ we have
\begin{align*}
  \| (I + \Delta)^k f \|_{L^2(\Omega)}^2 = \sum_{\sigma(\Delta)} (1 + \lambda)^{2k} \| \mathcal{F}_\lambda(f) \|_{L^2(\Omega)}^2 = \sum_{\sigma(\Delta)} \theta(\lambda)^2 e^{2t (1 + \lambda)^{\frac{1}{2s}}} \| a(\lambda) \|_{L^2(\Omega)}^2
\end{align*}
where we estimate $\theta(\lambda) \dfn e^{-t (1 + \lambda)^{\frac{1}{2s}}} (1 + \lambda)^{k}$ as follows: for $h > 0$ we have
\begin{align*}
  \frac{(1 + \lambda)^k (1/h)^{2k}}{(2k)!^s} = \left( \frac{ \left( (1 + \lambda)^{\frac{1}{2s}} (1/h)^{\frac{1}{s}} \right)^{2k}}{(2k)!} \right)^s \leq e^{s (1/h)^{\frac{1}{s}} (1 + \lambda)^{\frac{1}{2s}}}
\end{align*}
hence
\begin{align*}
  e^{-t (1 + \lambda)^{\frac{1}{2s}}} (1 + \lambda)^{k} &\leq e^{ \left(s (1/h)^{\frac{1}{s}} - t\right) (1 + \lambda)^{\frac{1}{2s}}} h^{2k} (2k)!^s
\end{align*}
which is bounded above by $h^{2k} (2k)!^s$, for every $\lambda \in \sigma(\Delta)$ and $k \in \Z_+$, provided we choose $h$ so big that $s (1/h)^{\frac{1}{s}} - t < 0$. That is:
\begin{align*}
  \| (I + \Delta)^k f \|_{L^2(\Omega)} &\leq \| a \|_{\D_{\omega_s, t}(\Delta)} h^{2k} (2k)!^s, \quad \forall k \in \Z_+,
\end{align*}
i.e.~$f \in \gev^{s, h}(\Omega)$.

Now, continuity and openness of the map $\mathcal{F}: \gev^s(\Omega) \rarr \gev^s(\Delta)$ follow either from very general versions of the Closed Graph Theorem and the Open Mapping Theorem~\cite[pp.~57, 59]{kothe_tvs2} (which hold in the category of DFS spaces), or by a more down-to-earth argument using the estimates we established above. The conclusion is that this map is a topological isomorphism.

The space of Gevrey ultradistributions of order $s \geq 1$ on $\Omega$ is simply the strong dual of $\gev^s(\Omega)$, and we denote it by $\D'_s(\Omega)$. As in the previous section we regard it as a space of generalized functions by means of the pairing ``integration against''~\eqref{eq:integ_against}. The biggest of them is $\D'_1(\Omega)$, usually identified, as a vector space, with the space of hyperfunctions on $\Omega$ (which is compact and oriented). Because every eigenfunction of $\Delta$ is real-analytic (i.e.~belongs to $\gev^1(\Omega)$, and hence is contained in every other $\gev^s(\Omega)$), we can extend $\mathcal{F}$ to $\D'_s(\Omega)$ by the same rule as we did in the previous section for distributions, obtaining thus a linear map $\mathcal{F}: \D'_s(\Omega) \rarr \Pi(\Delta)$ whose image is precisely
\begin{align*}
  \D'_s(\Delta) \dfn \bigcap_{t > 0} \gev^{s, -t}(\Delta) = \D'_{\{ \omega_s\}}(\Delta).
\end{align*}
This, as we have seen, is a FS space that we identify with the strong dual of $\gev^s(\Delta)$; the induced map $\mathcal{F}: \D'_s(\Omega) \rarr \D'_s(\Delta)$ is then a topological isomorphism. The proof of these claims is exactly like the one in the previous section; details are left to the reader.

\section{Regularity} \label{sec:reg}

Inspired by the usual notion of global hypoellipticity for LPDOs (and its analytic and Gevrey versions), as well as the abstract regularity condition~\eqref{eq:abstract_reg}, in this section we study several notions of regularity for (systems of) $\Delta$FDOs. Throughout this section, $\omega: \sigma(\Delta) \rarr \R_+$ is increasing and unbounded.
\begin{Def} \label{def:almost_hyp} For $\star = (\omega), \{ \omega \}$, we say that a matrix of $\Delta$FDOs $P = (P_{ij})_{n \times m}$ is \emph{almost $\D_{\star}$ globally hypoelliptic} -- or $(\mathrm{AGHE})_{\star}$ for short -- if for every $u \in \D'_{\star}(\Delta)^m$ such that $Pu \in \D_{\star}(\Delta)^n$ there exists $v \in \ker P$ such that $u - v \in \D_{\star}(\Delta)^m$.
\end{Def}

Despite the fancy name -- which we introduce only to simplify a few statements -- this idea is probably not new. The word ``almost'' comes from the fact that we are just checking regularity of the operator where it matters (for our purposes): it tells us nothing about the regularity of objects that lie in the kernel of the operator, as the usual notions of (global) hypoellipticity do. Indeed, it is clear that these more familiar notions of hypoellipticity can be recovered from our ``almost'' versions by imposing the appropriate regularizing property on the kernel of $P$.

These seem to be the correct notions of regularity for systems, in our present context, especially the ones we are interested in -- and we shall devote this entire section to convince the reader of this claim. One should think of $\Delta$FDOs as operators with ``constant coefficients'' in $\Omega$, although this notion has no rigorous meaning. As such, we expect their regularity properties (i.e.~various notions of global hypoellipticity) to be governed by certain inequalities involving their symbols. These ideas are going to guide our approach in what follows.

The results presented in this section are in the same spirit as those contained in the book by Wallach~\cite{wallach_haohs} and his earlier article with Greenfield~\cite{gw73} (although both of them deal only with smooth regularity). Our approach is more closely related to the latter -- working on a general compact manifold instead of a Lie group --, but we stress the validity of the results for systems (just like the former). An advantage of doing so is avoiding the hassle of Representation Theory\footnote{See Remark~\ref{rem:rep_theory}.} when it is not needed, which is definitely the case of our results in their final form; an obvious drawback is that we deal only with globally trivial systems, and the passage to more general situations (operators acting between invariant vector bundles on compact homogeneous spaces) is not entirely immediate.

We start by showing how an estimate on the symbol of a $\Delta$FDO $P$ implies a strong regularity property.
\begin{Prop} \label{prop:basic_ineq_ult} Suppose that for some $s \in \R$ and $C > 0$ we have
  \begin{align}
    \| \widehat{P}(\lambda) \phi \|_{L^2(\Omega)} &\geq C e^{s \omega(\lambda)}, \quad \forall \phi \in \ker \widehat{P}(\lambda)^\bot, \ \| \phi \|_{L^2(\Omega)} = 1, \ \forall \lambda \in \sigma(\Delta). \label{eq:hyp_system_ult}
  \end{align}
  Then for every $u \in \Pi(\Delta)$ there exists $v \in \ker P$, independent of $s$ and $C$, such that
  \begin{align*}
    \forall t \in \R, \ Pu \in \D_{\omega, t}(\Delta) &\Longrightarrow u - v \in \D_{\omega, s + t}(\Delta)
  \end{align*}
  and in this context the following estimate holds:
  \begin{align}
    \| u - v \|_{\D_{\omega, s + t}(\Delta)} &\leq C^{-1} \| P u \|_{\D_{\omega, t}(\Delta)}. \label{eq:hyp_system_ult2}
  \end{align}
  \begin{proof} Suppose that~\eqref{eq:hyp_system_ult} holds and let $u \in \Pi(\Delta)$.  For each $\lambda \in \sigma(\Delta)$ we write $u(\lambda) = v(\lambda) + w(\lambda)$ where $v(\lambda) \in \ker \widehat{P}(\lambda)$ and $w(\lambda) \in \ker \widehat{P}(\lambda)^\bot$. For every $\lambda \in \sigma(\Delta)$ we have
    \begin{align*}
      e^{s \omega(\lambda)} \| u(\lambda) - v(\lambda) \|_{L^2(\Omega)} = e^{s \omega(\lambda)} \| w(\lambda) \|_{L^2(\Omega)} \leq C^{-1} \| \widehat{P}(\lambda) w(\lambda) \|_{L^2(\Omega)} = C^{-1} \| \widehat{P}(\lambda) u(\lambda) \|_{L^2(\Omega)}.
    \end{align*}
    Squaring both sides, multiplying them by $e^{2t \omega(\lambda)}$ and summing over $\sigma(\Delta)$ yields~\eqref{eq:hyp_system_ult2}.
  \end{proof}
\end{Prop}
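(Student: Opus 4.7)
The idea is to define $v$ once and for all, by taking a pointwise orthogonal projection onto each $\ker \widehat{P}(\lambda)$, and then to read the desired regularity estimate off the hypothesis~\eqref{eq:hyp_system_ult} applied componentwise.

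First, for a given $u \in \Pi(\Delta)$ I would, for each $\lambda \in \sigma(\Delta)$, split $u(\lambda) = v(\lambda) + w(\lambda)$ with $v(\lambda) \in \ker \widehat{P}(\lambda)$ and $w(\lambda) \in \ker \widehat{P}(\lambda)^\bot$; this is possible since $E_\lambda$ is finite-dimensional and thus an inner product space direct sum of $\ker\widehat{P}(\lambda)$ and its orthogonal complement. Setting $v \dfn (v(\lambda))_{\lambda}$ and $w \dfn (w(\lambda))_{\lambda}$ produces two elements of $\Pi(\Delta)$ with $u = v + w$; moreover $\widehat{P}(\lambda)v(\lambda) = 0$ for every $\lambda$, so by Proposition~\ref{prop:nice_continuity_dfdo} we have $Pv = 0$, i.e.~$v \in \ker P$. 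Crucially, the construction of $v$ used only the decomposition $E_\lambda = \ker\widehat{P}(\lambda) \oplus \ker\widehat{P}(\lambda)^\bot$, which does not depend on $s$ or $C$.

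Next I would apply the hypothesis~\eqref{eq:hyp_system_ult} to the unit vector $w(\lambda)/\|w(\lambda)\|_{L^2(\Omega)}$ (when $w(\lambda) \neq 0$; otherwise the bound is trivial) to obtain, by homogeneity,
\begin{align*}
\|\widehat{P}(\lambda) w(\lambda)\|_{L^2(\Omega)} \geq C e^{s\omega(\lambda)} \|w(\lambda)\|_{L^2(\Omega)}, \quad \forall \lambda \in \sigma(\Delta).
\end{align*}
Since $\widehat{P}(\lambda) u(\lambda) = \widehat{P}(\lambda) w(\lambda) = (Pu)(\lambda)$, this rearranges to
\begin{align*}
e^{s\omega(\lambda)} \|u(\lambda) - v(\lambda)\|_{L^2(\Omega)} \leq C^{-1} \|(Pu)(\lambda)\|_{L^2(\Omega)}.
\end{align*}

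The final step is purely formal: assume $Pu \in \D_{\omega, t}(\Delta)$. Squaring the previous inequality, multiplying both sides by $e^{2t\omega(\lambda)}$, and summing over $\lambda \in \sigma(\Delta)$ yields
\begin{align*}
\|u - v\|_{\D_{\omega, s+t}(\Delta)}^2 = \sum_{\sigma(\Delta)} e^{2(s+t)\omega(\lambda)} \|u(\lambda) - v(\lambda)\|_{L^2(\Omega)}^2 \leq C^{-2} \sum_{\sigma(\Delta)} e^{2t\omega(\lambda)} \|(Pu)(\lambda)\|_{L^2(\Omega)}^2 = C^{-2} \|Pu\|_{\D_{\omega, t}(\Delta)}^2,
\end{align*}
which simultaneously shows that $u - v \in \D_{\omega, s+t}(\Delta)$ and gives~\eqref{eq:hyp_system_ult2}. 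There is no serious obstacle here — the only subtle point is to arrange the decomposition once, before any choice of $t$ has been made, so that the same $v$ works uniformly in $t$; this is automatic from our construction.
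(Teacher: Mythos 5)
Your proposal is correct and follows essentially the same route as the paper: split $u(\lambda)$ orthogonally into a $\ker\widehat{P}(\lambda)$ part and its complement, apply the symbol estimate to the normalized complement, then square, weight by $e^{2t\omega(\lambda)}$ and sum. Your extra remarks (invoking Proposition~\ref{prop:nice_continuity_dfdo} to confirm $v \in \ker P$, and noting that $v$ is built independently of $s$, $C$ and $t$) only make explicit what the paper leaves implicit.
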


Next, we list some instances when our basic estimate~\eqref{eq:hyp_system_ult} does not hold.
\begin{Prop} \label{prop:neg_basic_ineq_ult} Let $P$ be a non-zero $\Delta$FDO.
  \begin{enumerate}
  \item \label{prop:neg_basic_ineq_ult_it1} Assume that~\eqref{eq:basic_conv_omega1} holds for some $\rho > 0$.  If for every $s \in \R$ the estimate~\eqref{eq:hyp_system_ult} does not hold for any $C > 0$ then there exist:
    \begin{enumerate}
    \item \label{prop:neg_basic_ineq_ult_it1a} $u \in \D'_{(\omega)}(\Delta)$ such that $Pu \in \D_{(\omega)}(\Delta)$ but $u + v \notin \D_{(\omega)}(\Delta)$ for every $v \in \ker P$;
    \item \label{prop:neg_basic_ineq_ult_it1b} $u \in \Pi(\Delta)$ such that $Pu \in \D'_{(\omega)}(\Delta)$ but $u + v \notin \D'_{(\omega)}(\Delta)$ for every $v \in \ker P$.
    \end{enumerate}
  \item \label{prop:neg_basic_ineq_ult_it2} Assume that~\eqref{eq:basic_conv_omega1} holds for every $\rho > 0$. If for some $s < 0$ the estimate~\eqref{eq:hyp_system_ult} does not hold for any $C > 0$ then there exist:
    \begin{enumerate}
    \item \label{prop:neg_basic_ineq_ult_it2a} $u \in \D'_{\{\omega\}}(\Delta)$ such that $Pu \in \D_{\{\omega\}}(\Delta)$ but $u + v \notin \D_{\{\omega\}}(\Delta)$ for every $v \in \ker P$;
    \item \label{prop:neg_basic_ineq_ult_it2b} $u \in \Pi(\Delta)$ such that $Pu \in \D'_{\{\omega\}}(\Delta)$ but $u + v \notin \D'_{\{\omega\}}(\Delta)$ for every $v \in \ker P$.
    \end{enumerate}
  \end{enumerate}
  \begin{proof} \hfill
    \begin{enumerate}
    \item 
      \begin{enumerate}
      \item We shall prove more: that for each $s \in \R$ there exists $u \in \D_{\omega, s}(\Delta)$ such that $Pu \in \D_{(\omega)} (\Delta)$ but $u + v \notin \D_{\omega, s + \rho/2} (\Delta)$ no matter what $v \in \ker P$ is: the conclusion follows by taking $s = 0$. For each $\nu \in \N$ we apply the hypothesis to $s \dfn -\nu$: there are $\lambda_\nu \in \sigma(\Delta)$ and $\phi_\nu \in \ker \widehat{P}(\lambda)^\bot$ with $\| \phi_\nu \|_{L^2(\Omega)} = 1$ such that
        \begin{align*}
          \| \widehat{P}(\lambda_\nu) \phi_\nu \|_{L^2(\Omega)} &< 2^{-\nu} e^{-\nu \omega(\lambda_\nu)}.
        \end{align*}
        We can assume w.l.o.g.~that $\{\lambda_\nu\}_{\nu \in \N}$ is increasing. Let $s \in \R$ and define $u \in \Pi(\Delta)$ by
        \begin{align*}
          u(\lambda) &\dfn
          \begin{cases}
            e^{- (s + \rho/2) \omega(\lambda_\nu)} \phi_\nu, &\text{if $\lambda = \lambda_\nu$ for some $\nu$} \\
            0, &\text{otherwise}.
          \end{cases}
        \end{align*}
        For $t \in \R$ we have
        \begin{align*}
          \sum_{\sigma(\Delta)} e^{2(s + t) \omega(\lambda)} \| u (\lambda) \|_{L^2(\Omega)}^2 = \sum_{\nu = 1}^\infty e^{2(s + t) \omega(\lambda_\nu)} \frac{ \| \phi_\nu \|_{L^2(\Omega)}^2}{ e^{2(s + \rho/2) \omega(\lambda_\nu)}} = \sum_{\nu = 1}^\infty e^{2(t - \rho/2) \omega(\lambda_\nu)} 
        \end{align*}
        therefore $u \in \D_{\omega, s}(\Delta) \setminus \D_{\omega, s + \rho/2}(\Delta)$ again. However
        \begin{align*}
          \sum_{\sigma(\Delta)} e^{2t \omega(\lambda)} \| \widehat{P}(\lambda)u (\lambda) \|_{L^2(\Omega)}^2 &\leq \sum_{\nu = 1}^\infty  2^{-2 \nu} e^{2(t - s - \rho/2 - \nu) \omega(\lambda_\nu)}
        \end{align*}
        is finite for every $t \in \R$, that is, $Pu \in \D_{(\omega)}(\Delta)$. Moreover, if $v \in \ker P$ then for $\lambda \in \sigma(\Delta)$ we have $v(\lambda) \in \ker \widehat{P}(\lambda)$ so
        \begin{align*}
          \| u(\lambda) + v(\lambda) \|_{L^2(\Omega)}^2 = \| u(\lambda) \|_{L^2(\Omega)}^2 + \| v(\lambda) \|_{L^2(\Omega)}^2  \geq \| u(\lambda) \|_{L^2(\Omega)}^2
        \end{align*}
        and simple computation shows that $u + v \notin \D_{\omega, t}(\Delta)$ whenever $u \notin \D_{\omega, t}(\Delta)$, for every $t \in \R$.
      \item For each $\nu \in \N$ take $\lambda_\nu \in \sigma(\Delta)$ and $\phi_\nu \in \ker \widehat{P}(\lambda)^\bot$ as in the previous item, but now define $u \in \Pi(\Delta)$ by
    \begin{align*}
      u(\lambda) &\dfn
      \begin{cases}
        e^{\nu \omega(\lambda_\nu)} \phi_\nu, &\text{if $\lambda = \lambda_\nu$ for some $\nu$} \\
        0, &\text{otherwise}.
      \end{cases}
    \end{align*}
    On the one hand we have, for all $\nu \in \N$,
    \begin{align*}
      \| \widehat{P}(\lambda_\nu) u(\lambda_\nu) \|_{L^2(\Omega)} = e^{\nu \omega(\lambda_\nu)} \| \widehat{P}(\lambda_\nu) \phi_\nu \|_{L^2(\Omega)} \leq 2^{-\nu}
    \end{align*}
    which by Lemma~\ref{lem:charac_distr}\eqref{lem:charac_distr_it1} implies that $Pu \in \D'_{(\omega)}(\Delta)$. On the other hand, by that very lemma we conclude that $u \notin \D'_{(\omega)}(\Delta)$ for
    \begin{align*}
      \| u(\lambda_\nu) \|_{L^2(\Omega)} &= e^{\nu \omega(\lambda_\nu)}, \quad \forall \nu \in \N,
    \end{align*}
    and $\omega$ is positive, increasing and unbounded.
      \end{enumerate}
    \item
      \begin{enumerate}
      \item The hypothesis implies that for every $\nu \in \N$ there exist $\lambda_\nu \in \sigma(\Delta)$ and $\phi_\nu \in \ker \widehat{P}(\lambda)^\bot$ with $\| \phi_\nu \|_{L^2(\Omega)} = 1$ such that
        \begin{align*}
          \| \widehat{P}(\lambda_\nu) \phi_\nu \|_{L^2(\Omega)} &< e^{s \omega(\lambda_\nu)}
        \end{align*}
        and as usual we can assume that $\{\lambda_\nu\}_{\nu \in \N}$ is increasing. Define $u \in \Pi(\Delta)$ by
        \begin{align*}
          u(\lambda) &\dfn
          \begin{cases}
            \phi_\nu, &\text{if $\lambda = \lambda_\nu$ for some $\nu$} \\
            0, &\text{otherwise}.
          \end{cases}
        \end{align*}
        Clearly $u \in \D'_{\{\omega\}}(\Delta)$ by Lemma~\ref{lem:charac_distr}\eqref{lem:charac_distr_it2}, but obviously $u \notin L^2(\Delta)$ since $\| u(\lambda) \|_{L^2(\Omega)} \leq 1$ with equality for infinitely many $\lambda \in \sigma(\Delta)$. On the other hand, $Pu \in \D_{\{\omega\}}(\Delta)$ by Lemma~\ref{lem:charac_func_ber_rom}\eqref{lem:charac_func_ber_rom_it2}, whereas $u + v \notin L^2(\Delta)$ for every $v \in \ker P$ thanks to previous arguments.
      \item For each $\nu \in \N$ take $\lambda_\nu \in \sigma(\Delta)$ and $\phi_\nu \in \ker \widehat{P}(\lambda)^\bot$ as in the previous item, but now define $u \in \Pi(\Delta)$ by
    \begin{align*}
      u(\lambda) &\dfn
      \begin{cases}
        e^{-s\omega(\lambda_\nu)} \phi_\nu, &\text{if $\lambda = \lambda_\nu$ for some $\nu$} \\
        0, &\text{otherwise}.
      \end{cases}
    \end{align*}
    On the one hand we have, for all $\nu \in \N$,
    \begin{align*}
      \| \widehat{P}(\lambda_\nu) u(\lambda_\nu) \|_{L^2(\Omega)} = e^{-s \omega(\lambda_\nu)} \| \widehat{P}(\lambda_\nu) \phi_\nu \|_{L^2(\Omega)} \leq 1
    \end{align*}
    which by Lemma~\ref{lem:charac_distr}\eqref{lem:charac_distr_it2} implies that $Pu \in \D'_{\{\omega\}}(\Delta)$. On the other hand, by that very lemma we conclude that $u \notin \D'_{\{\omega\}}(\Delta)$ for
    \begin{align*}
      \| u(\lambda_\nu) \|_{L^2(\Omega)} &= e^{-s \omega(\lambda_\nu)}, \quad \forall \nu \in \N,
    \end{align*}
    and $\omega$ is positive, increasing and unbounded (take $t \dfn -s/2$ in that lemma).
      \end{enumerate}
    \end{enumerate}
  \end{proof}
\end{Prop}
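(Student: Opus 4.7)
The plan is to exploit the failure of~\eqref{eq:hyp_system_ult} to extract, for each case, an increasing sequence $\{\lambda_\nu\}_{\nu \in \N} \sset \sigma(\Delta)$ and unit vectors $\phi_\nu \in \ker \widehat{P}(\lambda_\nu)^\bot$ whose images under $\widehat{P}(\lambda_\nu)$ decay exponentially at a controlled rate. In the Beurling-type setting (part~1), we iterate the failure over $s = -\nu$ and may arrange $\| \widehat{P}(\lambda_\nu)\phi_\nu \|_{L^2(\Omega)} \leq 2^{-\nu} e^{-\nu\omega(\lambda_\nu)}$; in the Roumieu-type setting (part~2), a single value of $s < 0$ suffices, and we obtain $\| \widehat{P}(\lambda_\nu)\phi_\nu \|_{L^2(\Omega)} \leq e^{s\omega(\lambda_\nu)}$ for every $\nu$.

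The counterexamples $u$ are then sequences supported on the $\lambda_\nu$'s, set to $u(\lambda_\nu) \dfn c_\nu \phi_\nu$ and zero elsewhere, with the weights $c_\nu$ carefully tuned. For the (a) statements I would choose $c_\nu$ to give $u$ a marginal rate of decay (so that $u$ lies in the ambient dual but fails to lie in the corresponding function space), while the built-in super-exponential decay of $\widehat{P}(\lambda_\nu)\phi_\nu$ ensures $Pu$ lives in the function space. For instance, in~\eqref{prop:neg_basic_ineq_ult_it1a} the natural choice is $c_\nu \dfn e^{-(\rho/2)\omega(\lambda_\nu)}$ with $\rho$ as in~\eqref{eq:basic_conv_omega1}, placing $u$ in $L^2(\Delta) \sset \D'_{(\omega)}(\Delta)$ thanks to the summability of $e^{-\rho\omega(\lambda)}$ while keeping $u \notin \D_{\omega,\rho/2}(\Delta)$, hence $u \notin \D_{(\omega)}(\Delta)$. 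For the (b) statements I would instead take $c_\nu$ growing (e.g.\ $c_\nu \dfn e^{\nu\omega(\lambda_\nu)}$ in~\eqref{prop:neg_basic_ineq_ult_it1b}, $c_\nu \dfn e^{-s\omega(\lambda_\nu)}$ in~\eqref{prop:neg_basic_ineq_ult_it2b}) so that $u \notin \D'_{\star}(\Delta)$, while the smallness of $\widehat{P}(\lambda_\nu)\phi_\nu$ still forces $Pu \in \D'_\star(\Delta)$. Membership or non-membership in these spaces will be checked in each case via Lemmas~\ref{lem:charac_func_ber_rom} and~\ref{lem:charac_distr}.

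The obstruction to correcting $u$ by an element $v \in \ker P$ is automatic: for every $\lambda \in \sigma(\Delta)$ one has $u(\lambda) \in \ker\widehat{P}(\lambda)^\bot$ and $v(\lambda) \in \ker\widehat{P}(\lambda)$, so the Pythagorean identity gives
\begin{align*}
  \| u(\lambda) + v(\lambda) \|_{L^2(\Omega)}^2 &= \| u(\lambda) \|_{L^2(\Omega)}^2 + \| v(\lambda) \|_{L^2(\Omega)}^2 \geq \| u(\lambda) \|_{L^2(\Omega)}^2.
\end{align*}
This transfers the bad asymptotic behavior of $u$ at the $\lambda_\nu$'s to $u+v$, showing that neither sum can lie in the better space.

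The main technical difficulty is the bookkeeping of exponents: the rate of smallness of $\widehat{P}(\lambda_\nu)\phi_\nu$, the magnitude of $c_\nu$, and, in part~1, the parameter $\rho$ from~\eqref{eq:basic_conv_omega1} must be coordinated so that $u$ lands just inside the ambient dual but outside the target function space, while $Pu$ lands inside the target function space. The Beurling-type cases are the more delicate ones, because $Pu \in \D_{(\omega)}(\Delta)$ requires the weighted $L^2$ sums $\sum e^{2t\omega(\lambda_\nu)} \| \widehat{P}(\lambda_\nu) u(\lambda_\nu) \|_{L^2(\Omega)}^2$ to be finite for \emph{every} $t > 0$; this is precisely why we let the constants $\lambda_\nu, \phi_\nu$ yield a $\nu$-dependent (and hence arbitrarily fast) decay of $\widehat{P}(\lambda_\nu)\phi_\nu$. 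In the Roumieu cases a single decay rate works, and the argument collapses into a simpler one-parameter estimate.
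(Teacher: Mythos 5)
Your proposal is correct and follows essentially the same route as the paper: extracting the sequences $\lambda_\nu,\phi_\nu$ by iterating the failure of~\eqref{eq:hyp_system_ult} over $s=-\nu$ in the Beurling case (resp.\ using the single $s<0$ in the Roumieu case), choosing exactly the weights $c_\nu$ the paper uses, verifying membership via Lemmas~\ref{lem:charac_func_ber_rom} and~\ref{lem:charac_distr}, and disposing of corrections by $v\in\ker P$ through the same Pythagorean identity. The only cosmetic difference is that the paper phrases part~\eqref{prop:neg_basic_ineq_ult_it1a} as a slightly stronger statement quantified over all $s\in\R$ and then specializes to $s=0$, which is precisely your choice $c_\nu=e^{-(\rho/2)\omega(\lambda_\nu)}$.
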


From Propositions~\ref{prop:basic_ineq_ult} and~\ref{prop:neg_basic_ineq_ult} (only parts~\eqref{prop:neg_basic_ineq_ult_it1a},\eqref{prop:neg_basic_ineq_ult_it2a} -- parts~\eqref{prop:neg_basic_ineq_ult_it1b},\eqref{prop:neg_basic_ineq_ult_it2b} will be needed later on) it follows immediately that:
\begin{Thm} \label{thm:hyp_charac_estim} Let $P$ be a non-zero $\Delta$FDO.
  \begin{enumerate}
  \item \label{thm:hyp_charac_estim_it1} Assume that~\eqref{eq:basic_conv_omega1} holds for some $\rho > 0$. Then $P$ is $(\mathrm{AGHE})_{(\omega)}$ if and only if there exist $s \in \R$ and $C > 0$ such that~\eqref{eq:hyp_system_ult} holds. In that case, for every $u \in \Pi(\Delta)$ there exists $v \in \ker P$ such that
    \begin{align*}
      \forall t \in \R, \ Pu \in \D_{\omega, t}(\Delta) &\Longrightarrow u - v \in \D_{\omega, s + t}(\Delta)
    \end{align*}
    and, in particular, if $Pu \in \D_{(\omega)}(\Delta)$ then $u - v \in \D_{(\omega)}(\Delta)$.
  \item \label{thm:hyp_charac_estim_it2} Assume that~\eqref{eq:basic_conv_omega1} holds for every $\rho > 0$. Then $P$ is $(\mathrm{AGHE})_{\{\omega\}}$ if and only if for every $s < 0$ there exists $C > 0$ such that~\eqref{eq:hyp_system_ult} holds. In that case, for every $u \in \Pi(\Delta)$ there exists $v \in \ker P$ such that
    \begin{align*}
      \forall t \in \R, \ \forall s < 0, \ Pu \in \D_{\omega, t}(\Delta) &\Longrightarrow u - v \in \D_{\omega, s + t}(\Delta)
    \end{align*}
    and, in particular, if $Pu \in \D_{\{\omega\}}(\Delta)$ then $u - v \in \D_{\{\omega\}}(\Delta)$.
  \end{enumerate}
\end{Thm}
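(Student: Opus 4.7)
The plan is to treat this theorem as essentially a packaging of the two preceding propositions, each direction being a direct application (with a minor reindexing of the parameter $t$ in the Roumieu case).

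For the sufficiency direction in part \eqref{thm:hyp_charac_estim_it1}, I would start from the hypothesis that \eqref{eq:hyp_system_ult} holds for some $s \in \R$ and $C > 0$ and apply Proposition~\ref{prop:basic_ineq_ult} directly: given $u \in \D'_{(\omega)}(\Delta) \subset \Pi(\Delta)$ with $Pu \in \D_{(\omega)}(\Delta)$, there is $v \in \ker P$ with $u - v \in \D_{\omega, s+t}(\Delta)$ for every $t \in \R$ such that $Pu \in \D_{\omega,t}(\Delta)$; but $Pu \in \D_{\omega,t}(\Delta)$ for all $t$, so as $t$ ranges over $\R$ so does $s+t$, giving $u - v \in \bigcap_{t' \in \R} \D_{\omega, t'}(\Delta) = \D_{(\omega)}(\Delta)$. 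For part \eqref{thm:hyp_charac_estim_it2}, the same proposition yields, for each chosen $s < 0$, an element $v \in \ker P$ (independent of $s$) such that $u - v \in \D_{\omega, s+t}(\Delta)$ whenever $Pu \in \D_{\omega, t}(\Delta)$; since $Pu \in \D_{\{\omega\}}(\Delta)$ means $Pu \in \D_{\omega, t}(\Delta)$ for some $t > 0$, I would pick any $s \in (-t, 0)$, for which the estimate is available by hypothesis, so that $s + t > 0$ and $u - v \in \D_{\omega, s+t}(\Delta) \subset \D_{\{\omega\}}(\Delta)$.

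The necessity directions are contrapositives fed into Proposition~\ref{prop:neg_basic_ineq_ult}. In part \eqref{thm:hyp_charac_estim_it1}, if no pair $(s,C)$ makes \eqref{eq:hyp_system_ult} hold, then the hypothesis of Proposition~\ref{prop:neg_basic_ineq_ult}\eqref{prop:neg_basic_ineq_ult_it1} is met (using \eqref{eq:basic_conv_omega1} for some $\rho > 0$), and part~\eqref{prop:neg_basic_ineq_ult_it1a} produces a $u \in \D'_{(\omega)}(\Delta)$ with $Pu \in \D_{(\omega)}(\Delta)$ but $u + v \notin \D_{(\omega)}(\Delta)$ for any $v \in \ker P$, contradicting $(\mathrm{AGHE})_{(\omega)}$. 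In part \eqref{thm:hyp_charac_estim_it2}, assuming now \eqref{eq:basic_conv_omega1} for every $\rho > 0$, if some $s < 0$ admits no $C > 0$ fulfilling \eqref{eq:hyp_system_ult}, Proposition~\ref{prop:neg_basic_ineq_ult}\eqref{prop:neg_basic_ineq_ult_it2a} similarly produces a counterexample contradicting $(\mathrm{AGHE})_{\{\omega\}}$.

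I do not foresee a real obstacle here, since all the substantive analytic work was done in Propositions~\ref{prop:basic_ineq_ult} and~\ref{prop:neg_basic_ineq_ult}; the only minor care is to match the quantifier structure ("some $s \in \R$" in the Beurling case versus "every $s < 0$" in the Roumieu case) with the corresponding definitions of $\D_{(\omega)}$ and $\D_{\{\omega\}}$ as intersections versus unions over $t$, and to note that the $v \in \ker P$ produced by Proposition~\ref{prop:basic_ineq_ult} depends only on $u$, not on $s$ or $t$, which is precisely what allows the "in particular" refinements at the end of each item to be read off.
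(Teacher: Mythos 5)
Your proposal is correct and follows exactly the route the paper takes: the paper states that the theorem "follows immediately" from Proposition~\ref{prop:basic_ineq_ult} (sufficiency, together with the fact that the $v \in \ker P$ there is independent of $s$ and $t$) and from parts~\eqref{prop:neg_basic_ineq_ult_it1a} and~\eqref{prop:neg_basic_ineq_ult_it2a} of Proposition~\ref{prop:neg_basic_ineq_ult} (necessity, by contraposition). Your bookkeeping of the quantifiers in the Beurling versus Roumieu cases is exactly the content the paper leaves implicit.
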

\begin{Cor} If~\eqref{eq:basic_conv_omega1} holds for every $\rho > 0$ then $(\mathrm{AGHE})_{\{\omega\}}$ implies $(\mathrm{AGHE})_{(\omega)}$.
\end{Cor}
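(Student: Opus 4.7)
The plan is to observe that this corollary falls out immediately from the quantitative characterizations established in Theorem~\ref{thm:hyp_charac_estim}, which express each $(\mathrm{AGHE})_\star$ condition as a lower bound on $\widehat{P}(\lambda)$ restricted to $\ker \widehat{P}(\lambda)^\bot$. First, I would dispose of the trivial case $P = 0$ separately: both $(\mathrm{AGHE})_\star$ conditions hold vacuously, since $\ker P$ is the whole space and we may always take $v \dfn u$.

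For $P$ non-zero the idea is to chain the two parts of Theorem~\ref{thm:hyp_charac_estim} in tandem. The hypothesis that~\eqref{eq:basic_conv_omega1} holds for every $\rho > 0$ is evidently strong enough to invoke either part of the theorem. From $(\mathrm{AGHE})_{\{\omega\}}$ together with part~\eqref{thm:hyp_charac_estim_it2}, I obtain that for every $s < 0$ there exists $C > 0$ for which~\eqref{eq:hyp_system_ult} holds. Specializing to any single negative $s$ (say $s \dfn -1$) with its corresponding $C$ then exhibits a \emph{particular} pair $(s, C) \in \R \times (0, \infty)$ for which~\eqref{eq:hyp_system_ult} is valid, which is exactly the hypothesis of part~\eqref{thm:hyp_charac_estim_it1}. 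An appeal to that part then yields $(\mathrm{AGHE})_{(\omega)}$.

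The ``main obstacle'' here is essentially non-existent: the entire argument reduces to the elementary observation that the universally quantified statement ``for every $s < 0$'' in~\eqref{thm:hyp_charac_estim_it2} trivially implies the existentially quantified statement ``for some $s \in \R$'' in~\eqref{thm:hyp_charac_estim_it1}. A more direct approach that bypasses Theorem~\ref{thm:hyp_charac_estim} would be awkward, because although $\D_{(\omega)}(\Delta) \sset \D_{\{\omega\}}(\Delta)$ holds for the ``test-function'' side, the inclusion of ultradistribution spaces goes in the opposite direction $\D'_{\{\omega\}}(\Delta) \sset \D'_{(\omega)}(\Delta)$, so an arbitrary $u \in \D'_{(\omega)}(\Delta)$ cannot be plugged directly into the $(\mathrm{AGHE})_{\{\omega\}}$ hypothesis. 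Routing everything through the symbolic estimates is therefore the cleanest path.
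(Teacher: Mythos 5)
Your proposal is correct and matches the paper's (implicit) argument: the corollary is stated without proof precisely because it follows at once from Theorem~\ref{thm:hyp_charac_estim}, by noting that the estimate~\eqref{eq:hyp_system_ult} holding for every $s<0$ in part~\eqref{thm:hyp_charac_estim_it2} in particular yields it for some $s\in\R$, which is the characterization in part~\eqref{thm:hyp_charac_estim_it1}. Your separate treatment of $P=0$ and your remark on why the naive inclusion argument fails on the ultradistribution side are both accurate, if not strictly necessary.
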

Now recalling the spaces introduced in Section~\ref{sec:examples}:
\begin{Cor} Let $P$ be a non-zero $\Delta$FDO. If $P$ is almost $\cinfty$-globally hypoelliptic then $P$ is almost $\gev^s$-globally hypoelliptic for every $s \geq 1$. The former is, moreover, implied by the property that $P$ has a closed range in $L^2(\Delta)$ (regarded as an unbounded, densely defined operator).
  \begin{proof} By Theorem~\ref{thm:hyp_charac_estim}\eqref{thm:hyp_charac_estim_it2}, in order to prove that $P$ is almost $\gev^s$-globally hypoelliptic we must check the following: given $t < 0$ there exists $C > 0$ such that
    \begin{align*}
      \| \widehat{P}(\lambda) \phi \|_{L^2(\Omega)} &\geq C e^{t(1 + \lambda)^{\frac{1}{2s}}}, \quad \forall \phi \in \ker \widehat{P}(\lambda)^\bot, \ \| \phi \|_{L^2(\Omega)} = 1, \ \forall \lambda \in \sigma(\Delta).
    \end{align*}
    The hypothesis of almost $\cinfty$-global hypoellipticity implies, thanks to Theorem~\ref{thm:hyp_charac_estim}\eqref{thm:hyp_charac_estim_it1}, the existence of $t' \in \R$ and $C' > 0$ such that
    \begin{align*}
      \| \widehat{P}(\lambda) \phi \|_{L^2(\Omega)} &\geq C' (1 + \lambda)^{t'}, \quad \forall \phi \in \ker \widehat{P}(\lambda)^\bot, \ \| \phi \|_{L^2(\Omega)} = 1, \ \forall \lambda \in \sigma(\Delta).
    \end{align*}
    The conclusion then follows from Remark~\ref{rem:compares_smooth_gevrey}. Moreover, the existence of such $t'$ follows, for instance, from~\eqref{eq:closedL2} (take $t' \dfn 0$ in this case) -- which characterizes closedness of the range of $P$ in $L^2(\Delta)$ according to Proposition~\ref{prop:closedL2}, thus proving the second part of the statement.
  \end{proof}
\end{Cor}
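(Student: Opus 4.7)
The plan is to reduce both assertions to the quantitative characterization of almost global hypoellipticity given by Theorem~\ref{thm:hyp_charac_estim}, specialized to the two weight functions $\omega_\infty$ and $\omega_s$ constructed in Section~\ref{sec:examples}, and then to compare the resulting lower bounds using Remark~\ref{rem:compares_smooth_gevrey}.

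First I would unpack the hypothesis. For the smooth weight $\omega_\infty(\lambda) = \log(1+\lambda)$, condition~\eqref{eq:basic_conv_omega1} holds for some $\rho > 0$ by~\eqref{eq:series_smooth}, so Theorem~\ref{thm:hyp_charac_estim}\eqref{thm:hyp_charac_estim_it1} rephrases almost $\cinfty$-global hypoellipticity as the existence of $t' \in \R$ and $C' > 0$ with
\[
\| \widehat{P}(\lambda) \phi \|_{L^2(\Omega)} \geq C' (1+\lambda)^{t'}
\]
for every unit $\phi \in \ker \widehat{P}(\lambda)^\bot$ and every $\lambda \in \sigma(\Delta)$. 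On the other hand, the Gevrey weight $\omega_s(\lambda) = (1+\lambda)^{1/(2s)}$ satisfies~\eqref{eq:basic_conv_omega1} for every $\rho > 0$ by~\eqref{eq:series_gevrey}, so by Theorem~\ref{thm:hyp_charac_estim}\eqref{thm:hyp_charac_estim_it2} proving that $P$ is almost $\gev^s$-globally hypoelliptic amounts to producing, for each $t < 0$, a constant $C > 0$ such that $\| \widehat{P}(\lambda) \phi \|_{L^2(\Omega)} \geq C e^{t(1+\lambda)^{1/(2s)}}$ in the same uniform sense.

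The only nontrivial step is showing that a polynomial lower bound dominates a sub-exponential decay of this type. This is exactly the content of Remark~\ref{rem:compares_smooth_gevrey}, applied with $2s$ in place of $s$: for the given $t < 0$ and the $t'$ produced above there exists $C'' > 0$ such that
\[
(1+\lambda)^{t'} \geq C'' e^{t(1+\lambda)^{1/(2s)}}, \quad \forall \lambda \in \sigma(\Delta),
\]
and combining with the smooth estimate yields the required bound with constant $C = C' C''$, proving the first assertion.

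For the second half I would invoke Proposition~\ref{prop:closedL2}: closedness of the range of $P$ in $L^2(\Delta)$ is equivalent to the uniform bound $\| \widehat{P}(\lambda) \phi \|_{L^2(\Omega)} \geq C$ on unit vectors of $\ker \widehat{P}(\lambda)^\bot$, which is precisely the smooth estimate above with $t' = 0$, hence implies almost $\cinfty$-global hypoellipticity via Theorem~\ref{thm:hyp_charac_estim}\eqref{thm:hyp_charac_estim_it1}. I do not expect any genuine obstacle; the one point deserving care is checking that the convergence condition~\eqref{eq:basic_conv_omega1} is met for the ranges of $\rho$ required to apply Theorem~\ref{thm:hyp_charac_estim} in both directions, which is guaranteed by the Weyl bound~\eqref{eq:weyl}.
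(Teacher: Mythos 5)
Your proposal is correct and follows essentially the same route as the paper: it reduces both statements to the symbol estimates of Theorem~\ref{thm:hyp_charac_estim} for the weights $\omega_\infty$ and $\omega_s$, bridges them with Remark~\ref{rem:compares_smooth_gevrey} (correctly applied with $2s$ in place of $s$), and handles the $L^2$ part via Proposition~\ref{prop:closedL2} with $t'=0$. No gaps.
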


Now we turn our attention to those $\Delta$FDOs that preserve our spaces.
\begin{Thm} \label{thm:closed_iff_hypo} Let $P$ be a non-zero $\Delta$FDO.
  \begin{enumerate}
  \item \label{thm:closed_iff_hypo_it1} Suppose that~\eqref{eq:basic_conv_omega1} holds for some $\rho > 0$ and that $P \D_{(\omega)}(\Delta) \sset \D_{(\omega)}(\Delta)$. Then the induced map $P: \D_{(\omega)}(\Delta) \rarr \D_{(\omega)}(\Delta)$ has a closed range if and only if $P$ is $(\mathrm{AGHE})_{(\omega)}$.
  \item \label{thm:closed_iff_hypo_it2} Suppose that~\eqref{eq:basic_conv_omega1} holds for every $\rho > 0$ and that $P \D_{\{\omega\}}(\Delta) \sset \D_{\{\omega\}}(\Delta)$. Then the induced map $P: \D_{\{\omega\}}(\Delta) \rarr \D_{\{\omega\}}(\Delta)$ has a closed range if and only if $P$ is $(\mathrm{AGHE})_{\{\omega\}}$.
  \end{enumerate}
  \begin{proof}  First of all, notice that in both cases the map $P: \D_{\star}(\Delta) \rarr \D_{\star}(\Delta)$ is continuous: its graph is closed (by continuity of the map $P: \Pi(\Delta) \rarr \Pi(\Delta)$) and the Closed Graph Theorem~\cite[p.~57]{kothe_tvs2} applies.

    \begin{enumerate}
    \item We claim that
      \begin{align*}
        F &\dfn \{ u \in \D_{(\omega)}(\Delta) \st u(\lambda) \in \ker \widehat{P}(\lambda)^\bot, \ \forall \lambda \in \sigma(\Delta) \}
      \end{align*}
      is a closed subspace of $\D_{(\omega)}(\Delta)$: if $\{ u_\nu \}_{\nu \in \N} \sset F$ and $u \in \D_{(\omega)}(\Delta)$ are such that $u_\nu \to u$ in $\D_{(\omega)}(\Delta)$ then for each $\lambda \in \sigma(\Delta)$ we have $u_\nu(\lambda) \to u(\lambda)$, and $\ker \widehat{P}(\lambda)^\bot$ is closed in $E_\lambda$.
      
      Suppose that $G \dfn \ran \{ P: \D_{(\omega)}(\Delta) \rarr \D_{(\omega)}(\Delta) \}$ is a closed subspace of $\D_{(\omega)}(\Delta)$. Then $P: F \rarr G$ is a bijective continuous linear map between Fr{\'e}chet spaces, hence admits a continuous inverse $P^{-1}: G \rarr F$ by the Open Mapping Theorem. By definition of the projective limit topology, $\{ \| \cdot \|_{\D_{\omega, t}(\Delta)} \st t > 0 \}$ is a system of seminorms that generates the topology of $\D_{(\omega)}(\Delta)$. Because $F, G \sset \D_{(\omega)}(\Delta)$ are closed subspaces, the continuity of $P^{-1}: G \rarr F$ reads as follows: for every $t > 0$ there exist $s > 0$ and $C > 0$ such that
      \begin{align*}
        \| P^{-1} v \|_{\D_{\omega, t}(\Delta)} &\leq C \| v \|_{\D_{\omega, s}(\Delta)}, \quad \forall v \in G
      \end{align*}
      (recall that finitely many ``Sobolev norms'' in the right-hand side can be dominated by a single one) which can then be restated as
      \begin{align*}
        \| u \|_{\D_{\omega, t}(\Delta)} &\leq C \| Pu \|_{\D_{\omega, s}(\Delta)}, \quad \forall u \in F.
      \end{align*}
      In particular, for $\lambda \in \sigma(\Delta)$ let $\phi \in \ker \widehat{P}(\lambda)^\bot$. Then $\phi \in F$ (as an element of $\Pi(\Delta)$) and
      \begin{align*}
        e^{t \omega(\lambda)} \| \phi \|_{L^2(\Omega)} = \| \phi \|_{\D_{\omega, t}(\Delta)} \leq C \| P\phi \|_{\D_{\omega, s}(\Delta)} = C e^{s \omega(\lambda)} \| \widehat{P}(\lambda) \phi \|_{L^2(\Omega)}
      \end{align*}
      which implies that
      \begin{align*}
        \| \widehat{P}(\lambda) \phi \|_{L^2(\Omega)} &\geq C^{-1} e^{(t - s) \omega(\lambda)} \| \phi \|_{L^2(\Omega)}, \quad \forall \phi \in \ker \widehat{P}(\lambda)^\bot, \ \forall \lambda \in \sigma(\Delta).
      \end{align*}
      It follows from Theorem~\ref{thm:hyp_charac_estim}\eqref{thm:hyp_charac_estim_it1} that $P$ is $(\mathrm{AGHE})_{(\omega)}$.
      
      For the converse, assume that $P$ is $(\mathrm{AGHE})_{(\omega)}$ and let $\{ u_\nu \}_{\nu \in \N} \sset \D_{(\omega)}(\Delta)$ and $w \in \D_{(\omega)}(\Delta)$ be such that $Pu_\nu \to w$ in $\D_{(\omega)}(\Delta)$. Since this convergence also holds in $\Pi(\Delta)$ we have
      \begin{align*}
        \widehat{P}(\lambda) u_\nu(\lambda) &\to w(\lambda), \quad \forall \lambda \in \sigma(\Delta)
      \end{align*}
      and since $\ran \widehat{P}(\lambda)$ is always closed in $E_\lambda$ we can find $u \in \Pi(\Delta)$ such that $Pu = w$, simply by solving $\widehat{P}(\lambda) u(\lambda) = w(\lambda)$ for each $\lambda \in \sigma(\Delta)$ individually. By Theorem~\ref{thm:hyp_charac_estim}\eqref{thm:hyp_charac_estim_it1} there exists $v \in \ker P$ such that $u - v \in \D_{(\omega)}(\Delta)$ since $Pu \in \D_{(\omega)}(\Delta)$. But $P(u - v) = Pu = w$, which proves that $w$ actually belongs to $\ran \{ P: \D_{(\omega)}(\Delta) \rarr \D_{(\omega)}(\Delta) \}$.
    \item By~\cite[Theorem~2.5]{araujo18} the range of $P: \D_{\{\omega\}}(\Delta) \rarr \D_{\{\omega\}}(\Delta)$ is closed if and only if for every $t > 0$ there exists $t' > 0$ such that for every $u \in \D_{\{\omega\}}(\Delta)$ we have
      \begin{align}
        Pu \in \D_{\omega, t}(\Delta) &\Longrightarrow \text{$\exists v \in \ker \{ P: \D_{\{\omega\}}(\Delta) \rarr \D_{\{\omega\}}(\Delta)\}$ such that $u - v \in \D_{\omega, t'}(\Delta)$}. \label{eq:closed_r}
      \end{align}
      This is what we prove by assuming $P$ to be $(\mathrm{AGHE})_{\{\omega\}}$. For $u \in \D_{\{\omega\}}(\Delta)$ let $v \in \ker P$ be as in Theorem~\ref{thm:hyp_charac_estim}\eqref{thm:hyp_charac_estim_it2}: we have
      \begin{align*}
        \forall t > 0, \ Pu \in \D_{\omega, t}(\Delta) &\Longrightarrow u - v \in \D_{\omega, t/2}(\Delta).
      \end{align*}
      Since $v \in \D_{\{\omega\}}(\Delta)$ we have proved~\eqref{eq:closed_r} with $t' \dfn t/2$.
      
      For the converse, assume that $P$ is not $(\mathrm{AGHE})_{\{\omega\}}$: by Theorem~\ref{thm:hyp_charac_estim}\eqref{thm:hyp_charac_estim_it2} there exists $s < 0$ such that estimate~\eqref{eq:hyp_system_ult} holds for no $C > 0$, and by Proposition~\ref{prop:neg_basic_ineq_ult}\eqref{prop:neg_basic_ineq_ult_it2a} one can find $\tilde{u} \in \D'_{\{\omega\}}(\Delta)$ such that $P\tilde{u} \in \D_{\{\omega\}}(\Delta)$ while $\tilde{u} + \tilde{v} \notin L^2(\Delta)$ for every $\tilde{v} \in \ker P$. Assume by contradiction that the range of $P: \D_{\{\omega\}}(\Delta) \rarr \D_{\{\omega\}}(\Delta)$ is closed. Let $t > 0$ be such that $P\tilde{u} \in \D_{\omega, t}(\Delta)$ and let $t' > 0$ be as in~\eqref{eq:closed_r}. Let $u \dfn e^{-\frac{t'}{2} \omega(\Delta)} \tilde{u} \in \D_{\{\omega\}}(\Delta)$. Then $Pu = e^{-\frac{t'}{2} \omega(\Delta)} P\tilde{u} \in \D_{\omega, t + t'/2}(\Delta) \sset \D_{\omega, t}(\Delta)$ hence by assumption there exists $v \in \ker \{ P: \D_{\{\omega\}}(\Delta) \rarr \D_{\{\omega\}}(\Delta)\}$ such that $u - v \in \D_{\omega, t'}(\Delta)$. But then
      \begin{align*}
        \tilde{u} - e^{\frac{t'}{2} \omega(\Delta)} v = e^{\frac{t'}{2} \omega(\Delta)} (u - v) \in \D_{\omega, t'/2}(\Delta) \sset L^2(\Delta)
      \end{align*}
      which contradicts our hypothesis since
      \begin{align*}
        P \left( e^{\frac{t'}{2} \omega(\Delta)} v \right) = e^{\frac{t'}{2} \omega(\Delta)} Pv = 0.
      \end{align*}
      We conclude that the range of $P: \D_{\{\omega\}}(\Delta) \rarr \D_{\{\omega\}}(\Delta)$ cannot be closed.
    \end{enumerate}
  \end{proof}
\end{Thm}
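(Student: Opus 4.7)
The proof splits naturally into the Fréchet case (item 1) and the DFS case (item 2). My overall strategy in both cases is to bridge closed range with the symbol estimate~\eqref{eq:hyp_system_ult} via a functional-analytic argument, and then invoke Theorem~\ref{thm:hyp_charac_estim} to cross over to $(\mathrm{AGHE})_\star$. A preliminary observation useful in both items: the induced map $P : \D_\star(\Delta) \rarr \D_\star(\Delta)$ is automatically continuous, since its graph is closed inside $\Pi(\Delta) \times \Pi(\Delta)$ (where $P$ is continuous by construction) and $\D_\star(\Delta)$ is Fréchet (resp.\ DFS), so the Closed Graph Theorem applies.

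For item (1), the space $\D_{(\omega)}(\Delta)$ is Fréchet, so the Open Mapping Theorem is the natural tool. I would introduce the closed subspace
\[
F \dfn \{ u \in \D_{(\omega)}(\Delta) \st u(\lambda) \in \ker \widehat{P}(\lambda)^\bot, \ \forall \lambda \in \sigma(\Delta) \},
\]
(closedness follows from continuity of $u \mapsto u(\lambda)$ and closedness of $\ker \widehat{P}(\lambda)^\bot$ in the finite-dimensional $E_\lambda$). The restriction $P|_F$ is a continuous bijection onto the range $G = P\D_{(\omega)}(\Delta)$, and under the closed-range hypothesis $G$ is itself Fréchet; so $P|_F$ admits a continuous inverse. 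Unwinding its continuity against the defining seminorms $\|\cdot\|_{\D_{\omega,t}(\Delta)}$ produces, for each $t > 0$, some $s > 0$ and $C > 0$ with $\|u\|_{\D_{\omega,t}(\Delta)} \leq C \|Pu\|_{\D_{\omega,s}(\Delta)}$ for $u \in F$; evaluating on a single $\phi \in \ker \widehat{P}(\lambda)^\bot$ (viewed as a sequence concentrated at $\lambda$) delivers~\eqref{eq:hyp_system_ult} with exponent $t-s$, and Theorem~\ref{thm:hyp_charac_estim}\eqref{thm:hyp_charac_estim_it1} concludes $(\mathrm{AGHE})_{(\omega)}$. For the converse, if $Pu_\nu \to w$ in $\D_{(\omega)}(\Delta)$, then convergence also occurs in $\Pi(\Delta)$, so $\widehat{P}(\lambda)u_\nu(\lambda) \to w(\lambda)$ for each $\lambda$; since each $\widehat{P}(\lambda)$ has finite-dimensional, hence closed, range in $E_\lambda$, one solves $\widehat{P}(\lambda)u(\lambda) = w(\lambda)$ pointwise to assemble some $u \in \Pi(\Delta)$ with $Pu = w$, and Theorem~\ref{thm:hyp_charac_estim}\eqref{thm:hyp_charac_estim_it1} furnishes $v \in \ker P$ with $u - v \in \D_{(\omega)}(\Delta)$, so $w = P(u-v)$ lies in the range.

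Item (2) is where I expect the real work to be, since $\D_{\{\omega\}}(\Delta)$ is only DFS and the Open Mapping Theorem does not directly produce quantitative estimates on the Banach steps. My plan is to import from~\cite{araujo18} a characterization of closedness of range for maps between such inductive limits, namely condition~\eqref{eq:closed_r}: closed range is equivalent to the existence, for every $t > 0$, of some $t' > 0$ such that $Pu \in \D_{\omega,t}(\Delta)$ forces some $v \in \ker\{ P : \D_{\{\omega\}}(\Delta) \rarr \D_{\{\omega\}}(\Delta)\}$ with $u - v \in \D_{\omega,t'}(\Delta)$. Assuming this reformulation, the implication $(\mathrm{AGHE})_{\{\omega\}} \Longrightarrow$ closed range is almost immediate from Theorem~\ref{thm:hyp_charac_estim}\eqref{thm:hyp_charac_estim_it2}: given $u$ with $Pu \in \D_{\omega,t}(\Delta)$, apply the regularity statement with $s \dfn -t/2$ to get $v \in \ker P$ with $u - v \in \D_{\omega,t/2}(\Delta)$, and note $v \in \D_{\{\omega\}}(\Delta)$ automatically.

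The main obstacle is the reverse direction in item (2), which I would attack by contradiction. If $(\mathrm{AGHE})_{\{\omega\}}$ fails, Theorem~\ref{thm:hyp_charac_estim}\eqref{thm:hyp_charac_estim_it2} gives some $s < 0$ for which the symbol estimate cannot hold, and Proposition~\ref{prop:neg_basic_ineq_ult}\eqref{prop:neg_basic_ineq_ult_it2a} manufactures a pathological $\tilde u \in \D'_{\{\omega\}}(\Delta)$ with $P\tilde u \in \D_{\{\omega\}}(\Delta)$ such that no $\tilde v \in \ker P$ brings $\tilde u + \tilde v$ into $L^2(\Delta)$. The delicate step is transferring this pathology into $\D_{\{\omega\}}(\Delta)$ itself, which I plan to accomplish by applying the quantized $\Delta$FDO $e^{-\frac{t'}{2}\omega(\Delta)}$, where $t'$ is the index supplied by~\eqref{eq:closed_r} for $t$ chosen so that $P\tilde u \in \D_{\omega,t}(\Delta)$: the function $u \dfn e^{-\frac{t'}{2}\omega(\Delta)}\tilde u$ lies in $\D_{\{\omega\}}(\Delta)$, its image $Pu = e^{-\frac{t'}{2}\omega(\Delta)} P\tilde u$ improves by the same factor, and the closed-range hypothesis produces $v \in \ker P$ with $u - v \in \D_{\omega,t'}(\Delta)$. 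Rescaling by $e^{\frac{t'}{2}\omega(\Delta)}$ then returns $\tilde u - e^{\frac{t'}{2}\omega(\Delta)}v$ to $L^2(\Delta)$, with $e^{\frac{t'}{2}\omega(\Delta)}v \in \ker P$ (using commutativity of quantized symbols with $P$), contradicting the choice of $\tilde u$. The critical point I would need to verify carefully is precisely that $\ker P$ is invariant under multiplication by the symbol $e^{\frac{t'}{2}\omega(\Delta)}$, which follows from commutativity of all quantizations with $P$.
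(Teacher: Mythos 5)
Your proposal is correct and follows essentially the same route as the paper's own proof: the same closed subspace $F$ and Open Mapping Theorem argument for item (1), the same reduction to the quantitative closed-range criterion of~\cite[Theorem~2.5]{araujo18} for item (2), and the same contradiction via Proposition~\ref{prop:neg_basic_ineq_ult}\eqref{prop:neg_basic_ineq_ult_it2a} combined with conjugation by $e^{\pm\frac{t'}{2}\omega(\Delta)}$. The one point you flagged for verification -- that $\ker P$ is preserved by the quantized weight -- is handled in the paper exactly as you suggest, by commutativity of $P$ with quantizations of symbols.
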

We stress that all the results above, as well as their proofs, are valid for systems. One can also define the following ``distributional'' versions of Definition~\ref{def:almost_hyp}.
\begin{Def} \label{def:almost_hyp_ult} For $\star = (\omega), \{ \omega \}$, we say that a matrix of $\Delta$FDOs $P = (P_{ij})_{n \times m}$ is \emph{almost $\D'_{\star}$ globally hypoelliptic} -- or $(\mathrm{AGHE})'_{\star}$ for short -- if for every $u \in \Pi(\Delta)^m$ such that $Pu \in \D'_{\star}(\Delta)^n$ there exists $v \in \ker P$ such that $u - v \in \D'_{\star}(\Delta)^m$.
\end{Def}
By adapting the ideas in the proofs above (also using parts~\eqref{prop:neg_basic_ineq_ult_it1b},\eqref{prop:neg_basic_ineq_ult_it2b} in Proposition~\ref{prop:neg_basic_ineq_ult}) one has:
\begin{Thm} \label{thm:closed_iff_hypo_ult} Let $P$ be a non-zero $\Delta$FDO.
  \begin{enumerate}
  \item \label{thm:closed_iff_hypo_ult_it1} Suppose that~\eqref{eq:basic_conv_omega1} holds for some $\rho > 0$. The following properties are equivalent.
    \begin{enumerate}
    \item \label{thm:closed_iff_hypo_ult_it1a} $P$ is $(\mathrm{AGHE})'_{(\omega)}$.
    \item \label{thm:closed_iff_hypo_ult_it1b} $P$ is $(\mathrm{AGHE})_{(\omega)}$.
    \item \label{thm:closed_iff_hypo_ult_it1c} In case $P \D'_{(\omega)}(\Delta) \sset \D'_{(\omega)}(\Delta)$ the induced map $P: \D'_{(\omega)}(\Delta) \rarr \D'_{(\omega)}(\Delta)$ has a closed range.
    \end{enumerate}
    \item \label{thm:closed_iff_hypo_ult_it2} Suppose that~\eqref{eq:basic_conv_omega1} holds for every $\rho > 0$. The following properties are equivalent.
    \begin{enumerate}
    \item \label{thm:closed_iff_hypo_ult_it2a} $P$ is $(\mathrm{AGHE})'_{\{\omega\}}$.
    \item \label{thm:closed_iff_hypo_ult_it2b} $P$ is $(\mathrm{AGHE})_{\{\omega\}}$.
    \item \label{thm:closed_iff_hypo_ult_it2c} In case $P \D'_{\{\omega\}}(\Delta) \sset \D'_{\{\omega\}}(\Delta)$ the induced map $P: \D'_{\{\omega\}}(\Delta) \rarr \D'_{\{\omega\}}(\Delta)$ has a closed range.
    \end{enumerate}
  \end{enumerate}
  \begin{proof} The only more delicate part of the proof is the equivalence between~\eqref{thm:closed_iff_hypo_ult_it1a} and~\eqref{thm:closed_iff_hypo_ult_it1c}, which is similar to the proof of Theorem~\ref{thm:closed_iff_hypo}\eqref{thm:closed_iff_hypo_it2} but nevertheless we sketch it.

    By~\cite[Theorem~2.5]{araujo18} the range of $P: \D'_{(\omega)}(\Delta) \rarr \D'_{(\omega)}(\Delta)$ is closed if and only if for every $t > 0$ there exists $t' > 0$ such that for every $u \in \D'_{(\omega)}(\Delta)$ we have
    \begin{align}
      Pu \in \D_{\omega, -t}(\Delta) &\Longrightarrow \text{$\exists v \in \ker \{ P: \D'_{(\omega)}(\Delta) \rarr \D'_{(\omega)}(\Delta) \}$ such that $u - v \in \D_{\omega, -t'}(\Delta)$}. \label{eq:closed_rprime}
    \end{align}
    Assume this, but also, by contradiction, that $P$ is not $(\mathrm{AGHE})'_{(\omega)}$. It follows easily from Theorem~\ref{thm:hyp_charac_estim}\eqref{thm:hyp_charac_estim_it1} -- which also settles the equivalence with item~\eqref{thm:closed_iff_hypo_ult_it1b} above -- that estimate~\eqref{eq:hyp_system_ult} does not hold for any $s \in \R$ and $C > 0$. Thanks to the proof of Proposition~\ref{prop:neg_basic_ineq_ult}\eqref{prop:neg_basic_ineq_ult_it1a} (in which we take $s \dfn -t' - \rho/2$) there exists $u \in \D_{\omega, -t' - \rho/2}(\Delta) \sset \D'_{(\omega)}(\Delta)$ such that $Pu \in \D_{(\omega)} (\Delta) \sset \D_{\omega, -t}(\Delta)$ but $u + v \notin \D_{\omega, s + \rho/2} (\Delta) = \D_{\omega, -t'} (\Delta)$ for every $v \in \ker P$. This contradicts~\eqref{eq:closed_rprime}, hence $P$ is $(\mathrm{AGHE})'_{(\omega)}$.

    For the converse, assume $P$ to be $(\mathrm{AGHE})'_{(\omega)}$: by Proposition~\ref{prop:neg_basic_ineq_ult}\eqref{prop:neg_basic_ineq_ult_it1b} estimate~\eqref{eq:hyp_system_ult} holds for some $s \in \R$ and $C > 0$. Let $t > 0$ be given and $u \in \D'_{(\omega)}(\Delta)$ be such that $Pu \in \D_{\omega, -t}(\Delta)$. By Theorem~\ref{thm:hyp_charac_estim}\eqref{thm:hyp_charac_estim_it1} there exists $v \in \ker P$ such that $u - v \in \D_{\omega, s - t}(\Delta)$. By taking $t' \dfn \max \{ t - s, 0 \}$ in~\eqref{eq:closed_rprime} we conclude that $P: \D'_{(\omega)}(\Delta) \rarr \D'_{(\omega)}(\Delta)$ has a closed range.
  \end{proof}
\end{Thm}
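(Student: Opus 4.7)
The plan is to reduce each of (a), (b), (c) to the single symbolic inequality~\eqref{eq:hyp_system_ult}, with Theorem~\ref{thm:hyp_charac_estim} serving as the pivot. Since that theorem already characterizes $(\mathrm{AGHE})_{(\omega)}$ (resp.~$(\mathrm{AGHE})_{\{\omega\}}$) by the existence of $s \in \R$ and $C > 0$ (resp., for every $s < 0$ the existence of $C > 0$) making~\eqref{eq:hyp_system_ult} hold, my remaining task is to show that (a) and (c) are governed by the same estimate; parts~\eqref{thm:closed_iff_hypo_ult_it1} and~\eqref{thm:closed_iff_hypo_ult_it2} will follow parallel routes.

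First I would treat (a) $\Leftrightarrow$ (b) contrapositively. If (b) fails then~\eqref{eq:hyp_system_ult} fails for every admissible $s, C$, and Proposition~\ref{prop:neg_basic_ineq_ult}\eqref{prop:neg_basic_ineq_ult_it1b} (respectively~\eqref{prop:neg_basic_ineq_ult_it2b}) furnishes $u \in \Pi(\Delta)$ with $Pu \in \D'_\star(\Delta)$ but $u + v \notin \D'_\star(\Delta)$ for every $v \in \ker P$, contradicting (a). Conversely, if (b) holds and $u \in \Pi(\Delta)$ satisfies $Pu \in \D'_\star(\Delta)$, then $Pu \in \D_{\omega, -t}(\Delta)$ for some $t > 0$ and Proposition~\ref{prop:basic_ineq_ult} supplies $v \in \ker P$ with $u - v \in \D_{\omega, s-t}(\Delta) \sset \D'_\star(\Delta)$, establishing (a).

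The equivalence (a) $\Leftrightarrow$ (c) follows the template of Theorem~\ref{thm:closed_iff_hypo}\eqref{thm:closed_iff_hypo_it2}. I would invoke~\cite[Theorem~2.5]{araujo18} to recast closedness of the range of $P: \D'_{(\omega)}(\Delta) \rarr \D'_{(\omega)}(\Delta)$ as condition~\eqref{eq:closed_rprime}. Assuming (a), Proposition~\ref{prop:basic_ineq_ult} provides, for any $u \in \D'_{(\omega)}(\Delta)$ with $Pu \in \D_{\omega, -t}(\Delta)$, a $v \in \ker P$ with $u - v \in \D_{\omega, s-t}(\Delta)$; setting $t' \dfn \max\{t - s, 0\}$ then verifies~\eqref{eq:closed_rprime}. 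Conversely, if (c) holds but (a) fails, then~\eqref{eq:hyp_system_ult} fails for every $s$ and $C$, and inspecting the construction inside Proposition~\ref{prop:neg_basic_ineq_ult}\eqref{prop:neg_basic_ineq_ult_it1a} with parameter choice $s \dfn -t' - \rho/2$ (where $t'$ is furnished by~\eqref{eq:closed_rprime}) yields $u \in \D_{\omega, -t' - \rho/2}(\Delta) \sset \D'_{(\omega)}(\Delta)$ with $Pu \in \D_{(\omega)}(\Delta) \sset \D_{\omega, -t}(\Delta)$ while $u + v \notin \D_{\omega, -t'}(\Delta)$ for every $v \in \ker P$, contradicting~\eqref{eq:closed_rprime}. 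Part~\eqref{thm:closed_iff_hypo_ult_it2} goes through analogously, with the scales $\D_{\omega, t}$ and $\D_{\omega, -t}$ essentially swapping roles because $\D'_{\{\omega\}}(\Delta)$ is now an FS space rather than a DFS space.

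The main obstacle I anticipate is the bookkeeping in the (c) $\Rightarrow$ (a) step: one must choose the parameters in Proposition~\ref{prop:neg_basic_ineq_ult} so that the manufactured counterexample sits in the precise stratum of the $\D_{\omega, \cdot}$ scale needed to clash with the quantifier order in~\eqref{eq:closed_rprime}. The plain membership statements ``$u \in \D'_\star(\Delta)$, $u + v \notin \D'_\star(\Delta)$'' are too coarse for this purpose; what one actually needs is the finer information, already visible inside the proof of Proposition~\ref{prop:neg_basic_ineq_ult}, about which intermediate space $\D_{\omega, t}(\Delta)$ contains the counterexample. Once that stratified form is exploited, the parameter matching becomes straightforward.
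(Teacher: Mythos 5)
Your proposal is correct and follows essentially the same route as the paper's own proof: reducing (a), (b), (c) to estimate~\eqref{eq:hyp_system_ult} via Theorem~\ref{thm:hyp_charac_estim} and Proposition~\ref{prop:neg_basic_ineq_ult}\eqref{prop:neg_basic_ineq_ult_it1b}, recasting closedness of the range through~\cite[Theorem~2.5]{araujo18} as~\eqref{eq:closed_rprime}, and exploiting the stratified construction inside the proof of Proposition~\ref{prop:neg_basic_ineq_ult}\eqref{prop:neg_basic_ineq_ult_it1a} with $s \dfn -t' - \rho/2$ for the contradiction. The difficulty you flag at the end -- that the bare statement of Proposition~\ref{prop:neg_basic_ineq_ult} is too coarse and one must extract the intermediate $\D_{\omega,\cdot}$ membership from its proof -- is precisely how the paper handles it.
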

\section{Regularity and cohomology of complexes} \label{sec:reg_com}

Next, we apply our results in the previous section to complexes. For an increasing and unbounded function $\omega: \sigma(\Delta) \rarr \R_+$, let $\mathscr{V}$ denote one of the spaces: $\D_{(\omega)}(\Delta)$, $\D_{\{\omega\}}(\Delta)$, $\D'_{(\omega)}(\Delta)$ or $\D'_{\{\omega\}}(\Delta)$. Let $P = (P_{ij})_{n \times m}$ and $Q = (Q_{jk})_{m \times r}$ be two matrices of $\Delta$FDOs forming a differential complex~\eqref{eq:complexPQ} such that $Q$ maps $\mathscr{V}^r$ into $\mathscr{V}^m$, which is then mapped by $P$ into $\mathscr{V}^n$, forming a new complex~\eqref{eq:complexPQ_V}: as we have seen, both of these induced maps are continuous thanks to the Closed Graph Theorem.

In order to simplify the statements below we shall say that $Q$ is \emph{almost $\mathscr{V}$ globally hypoelliptic} if~\eqref{eq:abstract_reg} holds. Thanks to the results in the previous section (Theorem~\ref{thm:hyp_charac_estim}, Definition~\ref{def:almost_hyp_ult}), there is no loss of generality in doing so (provided, of course, we assume some convenient extra hypotheses on~\eqref{eq:basic_conv_omega1}, which we shall always do).

\begin{Lem} \label{lem:findim_cohom_closedr} If $\mathcal{H}_{P,Q}(\mathscr{V})$ is finite dimensional then $Q: \mathscr{V}^r \rarr \mathscr{V}^m$ has a closed range.
  \begin{proof} Let $\mathscr{V}$ be either $\D_{(\omega)}(\Delta)$ or $\D'_{\{\omega\}}(\Delta)$ (resp.~$\D_{\{\omega\}}(\Delta)$ or $\D'_{(\omega)}(\Delta)$): $\mathscr{V}$ is a FS space (resp.~DFS space). For simplicity we write
    \begin{align*}
      X \dfn \ker \{P: \mathscr{V}^m \rarr \mathscr{V}^n \}, \quad  Y \dfn \ker \{Q: \mathscr{V}^r \rarr \mathscr{V}^m \}, \quad  Z \dfn \ran \{Q: \mathscr{V}^r \rarr \mathscr{V}^m \}.
    \end{align*}
    The former two, as closed subspaces of the respective ambient spaces, are FS spaces (resp.~DFS spaces) for the respective subspace topologies. We must establish closedness of $Z$ in $\mathscr{V}^m$.

    To say that $n \dfn \dim(X/Z)$ is finite means that there exist $u_1, \ldots, u_n \in X$ whose classes modulo $Z$ form a basis for $X/Z$: if $M$ stands for their linear span in $X$ then $X = M \oplus Z$. Indeed, notice first that $M \cap Z = \{0 \}$, for any non-trivial linear combination of $u_1, \ldots, u_n$ that belongs to $Z$ would be mapped by the projection map to a null combination of their classes, so the scalars in the linear combination must be zero. Moreover, the same argument ensures that $u_1, \ldots, u_n$ are linearly independent, thus forming a basis for $M$ which, therefore, has dimension $n$. Also, the class of an element $u \in X$ modulo $Z$ must either be zero (hence $u \in Z$) or be a linear combination of the classes of $u_1, \ldots, u_n$, meaning that $u$ differs from $Z$ by an element in $M$, thus proving our claim.

    Now define the map
    \begin{align*}
      \TR{T}{([w], v)}{(\mathscr{V}^r / Y) \times M}{Qw + v}{X}
    \end{align*}
    which is continuous because $Q$ is, and bijective by the previous argument. By the Open Mapping Theorem for Fr{\'e}chet spaces (resp.~De Wilde's~\cite[p.~59]{kothe_tvs2}), $T$ is an isomorphism and as such maps closed subspaces to closed subspaces, hence $Z = T((\mathscr{V}^r / Y) \times \{0\})$ is closed in $X$.
  \end{proof}
\end{Lem}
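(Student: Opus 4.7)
The plan is to reduce the closed-range question to an application of the Open Mapping Theorem, exploiting the fact that $\mathscr{V}$ is either a Fr\'echet--Schwartz or DFS space in all four cases. First I would set $X \dfn \ker\{P: \mathscr{V}^m \rarr \mathscr{V}^n\}$ and $Z \dfn \ran\{Q: \mathscr{V}^r \rarr \mathscr{V}^m\}$, and observe that since $X$ is already closed in $\mathscr{V}^m$ (as the kernel of a continuous linear map), it is enough to prove that $Z$ is closed as a subspace of $X$. By definition, $\mathcal{H}_{P,Q}(\mathscr{V}) = X/Z$, so finite dimensionality of the cohomology means exactly that $Z$ has finite codimension in $X$.

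Next I would upgrade this to an algebraic direct sum decomposition: pick $u_1,\ldots,u_n \in X$ whose classes modulo $Z$ form a basis of $X/Z$, let $M$ denote their linear span, and verify that $X = M \oplus Z$. Linear independence of the $u_j$'s in $M$ and the triviality of $M \cap Z$ both follow from the same elementary argument that any non-trivial linear combination lying in $Z$ would project to a non-trivial null combination in $X/Z$; every element of $X$ either already lies in $Z$ or differs from something in $M$ by an element of $Z$. Thus $M$ is a genuine finite-dimensional algebraic complement.

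The crucial step is to exhibit the natural continuous linear bijection
\begin{align*}
  T: (\mathscr{V}^r/\ker Q) \times M \longrightarrow X, \qquad ([w], v) \longmapsto Qw + v.
\end{align*}
Continuity is immediate from the continuity of $Q: \mathscr{V}^r \rarr \mathscr{V}^m$, and bijectivity comes from the direct sum decomposition $X = M \oplus Z$ combined with the fact that $Q$ factors injectively through $\mathscr{V}^r/\ker Q$. Here $\mathscr{V}^r/\ker Q$ is again FS or DFS (kernels are closed, and both categories are stable under closed subspaces and Hausdorff quotients), and $M$ is finite dimensional, so the domain of $T$ belongs to the same category as $X$.

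The main obstacle, and the whole point of the argument, is invoking the Open Mapping Theorem in the right category: for Fr\'echet spaces this is classical, while for DFS spaces one needs De Wilde's version cited in~\cite{kothe_tvs2}. Once $T$ is known to be a topological isomorphism, it carries closed subspaces to closed subspaces, and in particular
\begin{align*}
  Z = T\bigl((\mathscr{V}^r/\ker Q) \times \{0\}\bigr)
\end{align*}
is closed in $X$, which as noted is all we needed. I do not expect the finite-dimensional piece $M$ to cause any trouble, since it is automatically closed and the product with a finite-dimensional factor preserves both FS and DFS structure; the only subtle point is making sure the quotient $\mathscr{V}^r/\ker Q$ indeed lives in the right category, which is a standard fact about those classes of locally convex spaces.
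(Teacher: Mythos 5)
Your proposal is correct and follows essentially the same route as the paper: the same finite-dimensional complement $M$ with $X = M \oplus Z$, the same map $T: (\mathscr{V}^r/\ker Q) \times M \rarr X$, and the same appeal to the Open Mapping Theorem in the Fr\'echet (resp.~De Wilde/DFS) category to conclude that $Z = T\bigl((\mathscr{V}^r/\ker Q) \times \{0\}\bigr)$ is closed. Your added remarks on the stability of the FS/DFS classes under closed subspaces, Hausdorff quotients and finite-dimensional products are a welcome explicit justification of points the paper leaves implicit.
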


\begin{Thm} \label{thm:main} Let $\mathscr{V}$ be either $\D_{(\omega)}(\Delta)$ or $\D'_{(\omega)}(\Delta)$ (resp.~$\D_{\{\omega\}}(\Delta)$ or $\D'_{\{\omega\}}(\Delta)$) and assume that~\eqref{eq:basic_conv_omega1} holds for some (resp.~all) $\rho > 0$. Then $\mathcal{H}_{P,Q}(\mathscr{V})$ is finite dimensional if and only if $\mathcal{H}_{P,Q}(\Sigma(\Delta))$ is finite dimensional and $Q$ is almost $\mathscr{V}$ globally hypoelliptic, and in that case
  \begin{align*}
    \dim \mathcal{H}_{P,Q} (\mathscr{V}) &= \sum_{\sigma(\Delta)} \dim \left( \frac{\ker \widehat{P}(\lambda)}{\ran \widehat{Q}(\lambda)} \right).
  \end{align*}
  \begin{proof} If $\mathcal{H}_{P,Q}(\mathscr{V})$ is finite dimensional then by Proposition~\ref{prop:findimV} so is $\mathcal{H}_{P,Q}(\Sigma(\Delta))$, and by Lemma~\ref{lem:findim_cohom_closedr} we have that $Q: \mathscr{V}^r \rarr \mathscr{V}^m$ has a closed range -- which, by Theorems~\ref{thm:closed_iff_hypo} or~\ref{thm:closed_iff_hypo_ult}, ensures that $Q$ is almost $\mathscr{V}$ globally hypoelliptic. The identity between dimensions is precisely~\eqref{eq:basic_eq_dim}.

    On the other hand, if $\dim \mathcal{H}_{P,Q}(\Sigma(\Delta)) < \infty$ and $Q$ is almost $\mathscr{V}$ globally hypoelliptic then by Lemma~\ref{lem:reg_impl_inject_cohom} (as well as the digression that precedes it) all the natural maps~\eqref{eq:natural_maps_cohomology} are injective, with the endpoints of the same dimension according to Corollary~\ref{cor:cond_fin_dim}, thus yielding finite dimensionality of $\mathcal{H}_{P,Q}(\mathscr{V})$.
  \end{proof}
\end{Thm}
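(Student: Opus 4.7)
The plan is to prove the two directions separately and then deduce the dimension formula by sandwiching $\mathcal{H}_{P,Q}(\mathscr{V})$ between $\mathcal{H}_{P,Q}(\Sigma(\Delta))$ and $\mathcal{H}_{P,Q}(\Pi(\Delta))$ using the natural maps~\eqref{eq:natural_maps_cohomology}. Essentially everything needed has been prepared in the preceding sections; the proof will amount to stitching the right lemmas together, once we read the ``almost $\mathscr{V}$ globally hypoelliptic'' hypothesis correctly against the regularity condition~\eqref{eq:abstract_reg}.

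For the forward direction, I would assume that $\dim \mathcal{H}_{P,Q}(\mathscr{V}) < \infty$. Proposition~\ref{prop:findimV} immediately gives finite dimensionality of $\mathcal{H}_{P,Q}(\Sigma(\Delta))$, together with the inequality~\eqref{eq:basic_ineq_dim}. Next I would invoke Lemma~\ref{lem:findim_cohom_closedr} to conclude that $Q: \mathscr{V}^r \to \mathscr{V}^m$ has closed range. Finally, applying Theorem~\ref{thm:closed_iff_hypo} when $\mathscr{V}$ is one of the ``test-function'' spaces $\D_{(\omega)}(\Delta)$ or $\D_{\{\omega\}}(\Delta)$, or Theorem~\ref{thm:closed_iff_hypo_ult} when $\mathscr{V}$ is one of the ``distribution'' spaces $\D'_{(\omega)}(\Delta)$ or $\D'_{\{\omega\}}(\Delta)$, the closed-range property translates precisely into $Q$ being $(\mathrm{AGHE})_\star$ or $(\mathrm{AGHE})'_\star$, i.e.~into~\eqref{eq:abstract_reg} (for systems, which the author remarks carries over verbatim from the scalar case). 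The hypotheses on~\eqref{eq:basic_conv_omega1} are exactly what these theorems require.

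For the reverse direction, assume $\dim \mathcal{H}_{P,Q}(\Sigma(\Delta)) < \infty$ and that $Q$ satisfies~\eqref{eq:abstract_reg} (the ``almost $\mathscr{V}$ globally hypoelliptic'' property). By Corollary~\ref{cor:cond_fin_dim}, $\mathcal{H}_{P,Q}(\Pi(\Delta))$ is also finite dimensional with
\[
\dim \mathcal{H}_{P,Q}(\Sigma(\Delta)) = \dim \mathcal{H}_{P,Q}(\Pi(\Delta)) = \sum_{\sigma(\Delta)} \dim \left( \frac{\ker \widehat{P}(\lambda)}{\ran \widehat{Q}(\lambda)} \right).
\]
By Lemma~\ref{lem:reg_impl_inject_cohom} the natural map $\mathcal{H}_{P,Q}(\mathscr{V}) \to \mathcal{H}_{P,Q}(\Pi(\Delta))$ is injective, while the map $\mathcal{H}_{P,Q}(\Sigma(\Delta)) \to \mathcal{H}_{P,Q}(\mathscr{V})$ is always injective (as noted in the digression preceding Proposition~\ref{prop:findimV}, since its composition with the previous map is injective). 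Thus the chain~\eqref{eq:natural_maps_cohomology} consists of two injective linear maps whose composition is a bijection between finite dimensional spaces of equal dimension; both maps must then be isomorphisms, yielding finite dimensionality of $\mathcal{H}_{P,Q}(\mathscr{V})$ together with the claimed dimension formula.

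The only even mildly tricky point is keeping track of which instance of the theorems from Section~\ref{sec:reg} applies to which $\mathscr{V}$, and matching it with the assumption on~\eqref{eq:basic_conv_omega1}; the ``some $\rho>0$'' versus ``every $\rho>0$'' dichotomy corresponds exactly to the Beurling-type vs.~Roumieu-type choice of $\mathscr{V}$. Once this bookkeeping is set up, the proof is essentially a one-line invocation of each ingredient, with no new calculation required.
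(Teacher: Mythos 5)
Your proposal is correct and follows essentially the same route as the paper's own proof: Proposition~\ref{prop:findimV} plus Lemma~\ref{lem:findim_cohom_closedr} plus Theorems~\ref{thm:closed_iff_hypo}/\ref{thm:closed_iff_hypo_ult} for the forward direction, and Lemma~\ref{lem:reg_impl_inject_cohom} together with Corollary~\ref{cor:cond_fin_dim} (i.e.~the content of Corollary~\ref{cor:isoms_abstract}) for the converse and the dimension identity. The bookkeeping of which closed-range theorem and which hypothesis on~\eqref{eq:basic_conv_omega1} applies to which $\mathscr{V}$ is also handled exactly as in the paper.
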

\section{Applications} \label{sec:app}

Let $G$ be a compact, connected Lie group, $\gr{g}$ its Lie algebra and $\gr{v} \sset \C \gr{g}$ a Lie subalgebra, which we regard as a left-invariant involutive structure on $G$ as in the Introduction: we refer the reader to it for notation. There, we were denoting by $\LL_1, \ldots, \LL_n, \MM_1, \ldots, \MM_m$ a basis of $\C \gr{g}$ such that $\LL_1, \ldots, \LL_n$ is a basis of $\gr{v}$; and by $\tau_1, \ldots, \tau_n, \zeta_1, \ldots, \zeta_m \in \C \gr{g}^*$ we meant the corresponding dual basis. In this section, we insist in the identification~\eqref{eq:isom_dprime1} i.e.~sections of $\Lambda^{p,q}$ are interpreted as sections of the subbundle of $\wedge^{p + q} \C T^* G$ spanned by the partial frame of left-invariant forms~\eqref{eq:bases_pq} -- hence true $(p + q)$-forms on $G$. This means that every global section $u$ of $\Lambda^{p,q}$ has a ``canonical representative'' of the form~\eqref{eq:urepglobal}, which we regard as $u$ itself.

Once more we endow $G$ with an $\ad$-invariant metric (which, as recalled in the Introduction, always exists since $G$ is compact) and denote by $\Delta$ its Laplace-Beltrami operator. Thus $\Delta$ commutes with every left-invariant vector field (actually with every left-invariant differential operator), which leads us to conclude that when one expresses the differential operator $\dd': \Lambda^{p,q} \rarr \Lambda^{p,q + 1}$ associated with $\gr{v}$ (or, more precisely, with the left-invariant involutive structure $\VV \sset \C T G$) as a matrix of operators w.r.t.~the bases~\eqref{eq:bases_pq} -- as we did in the Introduction -- its entries commute with $\Delta$ i.e.~we may regard $\dd'$ as a matrix of $\Delta$FDOs. We are then allowed to apply all the results deduced in the previous sections to it.

In order to better carry out this program we shall, for each $\lambda \in \sigma(\Delta)$, denote by $\mathscr{E}_\lambda^{p,q}$ the space of all $u \in \cinfty(G; \Lambda^{p,q})$ such that $u_{IJ} \in E_\lambda$ for every ordered multi-indices $I,J$. These spaces are clearly finite dimensional, and we have induced maps $\dd': \mathscr{E}_\lambda^{p,q} \rarr \mathscr{E}_\lambda^{p,q + 1}$; the induced cohomology space is precisely $H^{p,q}_\VV(G; E_\lambda)$, which is always finite dimensional.

\subsection{Immediate consequences} \label{sec:immediate}

The main conclusions from the previous sections, applied to this situation, can be then read as follows (for $p \in \{0, \ldots, m\}$ and $q \in \{0, \ldots, n - 1\}$):
\begin{enumerate}
\item The map $\dd': \cinfty(G; \Lambda^{p,q}) \rarr \cinfty(G; \Lambda^{p,q + 1})$ has a closed range if and only if the same holds for the map $\dd': \D'(G; \Lambda^{p,q}) \rarr \D'(G; \Lambda^{p,q + 1})$. 
\item For each $s \geq 1$, the map $\dd': \gev^s(G; \Lambda^{p,q}) \rarr \gev^s(G; \Lambda^{p,q + 1})$ has a closed range if and only if the same holds for the map $\dd': \D_s'(G; \Lambda^{p,q}) \rarr \D_s'(G; \Lambda^{p,q + 1})$.
\item If the (unbounded, densely defined) map $\dd': L^2(G; \Lambda^{p,q}) \rarr L^2(G; \Lambda^{p,q + 1})$ has a closed range then so does the map $\dd': \cinfty(G; \Lambda^{p,q}) \rarr \cinfty(G; \Lambda^{p,q + 1})$.
\item If the map $\dd': \cinfty(G; \Lambda^{p,q}) \rarr \cinfty(G; \Lambda^{p,q + 1})$ has a closed range then the same is true for $\dd': \gev^s(G; \Lambda^{p,q}) \rarr \gev^s(G; \Lambda^{p,q + 1})$ for every $s \geq 1$. If, moreover, either the smooth cohomology space $H^{p, q + 1}_\VV(G; \cinfty(G))$ or the Gevrey cohomology space $H^{p, q + 1}_\VV(G; \gev^s(G))$ is finite dimensional then both of them are isomorphic to
  \begin{align*}
    \bigoplus_{\sigma(\Delta)} \frac{\ker \left \{ \dd': \mathscr{E}_\lambda^{p,q + 1} \longrightarrow \mathscr{E}_\lambda^{p,q + 2} \right \} }{ \ran \left \{ \dd': \mathscr{E}_\lambda^{p,q} \longrightarrow \mathscr{E}_\lambda^{p,q + 1} \right \} } &= \bigoplus_{\sigma(\Delta)} H^{p, q + 1}_\VV (G; E_\lambda).
  \end{align*}
  This can be seen as a global version of~\cite[Theorem~5.1]{cc11}, or a version of~\cite[Corollary~2.2]{greenfield72} for systems.
\end{enumerate}

\subsection{Left-invariant cohomology} \label{sec:licoho}

Let $G$ be a Lie group. A de Rham cohomology class on $G$ is said to be \emph{left-invariant} if it has some left-invariant representative: recall that a differential form $f$ on $G$ is left-invariant if $(L_x)^* f = f$ for all $x \in G$. When $G$ is compact and connected, every de Rham cohomology class is left-invariant~\cite{ce48}. This can be seen as a result about de Rham's structure $\VV \dfn \C T G$, which is obviously left-invariant with underlying Lie algebra $\gr{v} \dfn \C \gr{g}$, and raises the following question:
\begin{quote}
  Let $G$ be compact and connected and $\VV$ be a left-invariant involutive structure on $G$. Given a bidegree $(p,q)$, when is every cohomology class in $H^{p,q}_\VV(G; \cinfty(G))$ left-invariant?
\end{quote}
Leaving aside proper definitions for a moment, in this section we prove:
\begin{Thm} \label{thm:main2} Let $G$ be a compact and connected Lie group and $\VV$ be a left-invariant involutive structure on $G$. For any given bidegree $(p,q)$, if $H^{p,q}_\VV(G; \cinfty(G))$ is finite dimensional then every cohomology class in $H^{p,q}_\VV(G; \cinfty(G))$ left-invariant if and only if $H^{p,q}_\VV(G; E_\lambda) = 0$ for every $\lambda \in \sigma(\Delta) \setminus 0$.
\end{Thm}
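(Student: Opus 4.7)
My plan is to exploit the Fourier decomposition and the abstract isomorphism provided by Theorem~\ref{thm:main}, once I identify $\mathscr{E}_0^{p,q}$ with the space of left-invariant sections of $\Lambda^{p,q}$. Since $G$ is connected, $E_0$ consists precisely of the constant functions on $G$; thus an element of $\mathscr{E}_0^{p,q}$ is, in the representation~\eqref{eq:urepglobal}, a form $u = \psum u_{IJ} \ \zeta_I \wedge \tau_J$ with $u_{IJ} \in \C$, i.e.~a left-invariant $(p+q)$-form. The operator $\dd'$ preserves $\mathscr{E}_0^{p,q}$ (either by direct inspection of~\eqref{eq:isom_dprime3} or because $\dd'$ is a $\Delta$FDO and hence preserves each eigenspace level $\mathscr{E}_\lambda^{p,q}$), so $H^{p,q}_\VV(G; E_0)$ is precisely the left-invariant cohomology in bidegree $(p,q)$.

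Under the hypothesis that $H^{p,q}_\VV(G;\cinfty(G))$ is finite dimensional, Theorem~\ref{thm:main} (applied to the $\Delta$FDOs obtained from $\dd'$ in bidegrees $(p,q-1)\to(p,q)\to(p,q+1)$, with $\mathscr{V} = \cinfty(\Delta) \cong \cinfty(G;\Lambda^{\bullet,\bullet})$) furnishes a natural isomorphism
\begin{align*}
  H^{p,q}_\VV(G;\cinfty(G)) \cong \bigoplus_{\lambda \in \sigma(\Delta)} H^{p,q}_\VV(G; E_\lambda),
\end{align*}
induced by sending a class $[u]$ to the tuple $([\mathcal{F}_\lambda(u)])_\lambda$ (this is well defined because each $\mathcal{F}_\lambda$ commutes with $\dd'$, as the latter is a matrix of $\Delta$FDOs). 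Under this isomorphism the $\lambda=0$ summand corresponds to the image of the left-invariant cohomology.

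For the ``if'' direction I would argue as follows: if $H^{p,q}_\VV(G;E_\lambda)=0$ for every $\lambda\neq 0$, then a class $[u]\in H^{p,q}_\VV(G;\cinfty(G))$ is entirely captured by its $\lambda=0$ component $[\mathcal{F}_0(u)]\in H^{p,q}_\VV(G;E_0)$. Chasing the isomorphism of Theorem~\ref{thm:main} (which factors through $\mathcal{H}_{P,Q}(\Sigma(\Delta))$, so every class has a representative which is a finite sum of eigen-components), I can replace $u$ by a cohomologous form supported only in eigenspaces $\lambda$ for which $H^{p,q}_\VV(G;E_\lambda)\neq 0$, namely $\lambda=0$ -- producing a left-invariant representative. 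For the ``only if'' direction, given $\lambda_0\in\sigma(\Delta)\setminus\{0\}$ and $v\in\mathscr{E}_{\lambda_0}^{p,q}$ with $\dd' v=0$, I view $v$ as a smooth global section: by hypothesis its class in $H^{p,q}_\VV(G;\cinfty(G))$ admits a left-invariant representative $w\in\mathscr{E}_0^{p,q}$, so $v-w=\dd' u$ for some $u\in\cinfty(G;\Lambda^{p,q-1})$. Applying the projector $\mathcal{F}_{\lambda_0}$ to both sides and using $\mathcal{F}_{\lambda_0}(w)=0$ (since $\lambda_0\neq 0$ and $w$ has constant coefficients) together with $\mathcal{F}_{\lambda_0}\circ\dd'=\dd'\circ\mathcal{F}_{\lambda_0}$, I obtain $v=\dd'\mathcal{F}_{\lambda_0}(u)$ with $\mathcal{F}_{\lambda_0}(u)\in\mathscr{E}_{\lambda_0}^{p,q-1}$, so $[v]=0$ in $H^{p,q}_\VV(G;E_{\lambda_0})$.

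The main technical point is the ``if'' direction: the natural map $H^{p,q}_\VV(G;E_0)\to H^{p,q}_\VV(G;\cinfty(G))$ need not be injective a priori, and it could conceivably fail to realize the $\lambda=0$ summand of the decomposition above. What saves the argument is the passage through $\mathcal{H}_{P,Q}(\Sigma(\Delta))$: the isomorphism of Corollary~\ref{cor:isoms_abstract} (whose hypotheses are met here because $H^{p,q}_\VV(G;\cinfty(G))$ is finite dimensional and $\dd'$ is $(\mathrm{AGHE})_{(\omega_\infty)}$ by Lemma~\ref{lem:findim_cohom_closedr} and Theorem~\ref{thm:closed_iff_hypo}) allows me to reduce to finitely supported representatives, where the argument becomes purely algebraic.
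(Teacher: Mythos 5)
Your proposal is correct and follows essentially the same route as the paper: both reduce the statement to the surjectivity of $H^{p,q}_\VV(G;E_0)\to H^{p,q}_\VV(G;\cinfty(G))$, use Theorem~\ref{thm:main} together with Corollary~\ref{cor:isoms_abstract} to identify $H^{p,q}_\VV(G;\cinfty(G))$ with $\mathcal{H}_{P,Q}(\Sigma(\Delta))\cong\bigoplus_{\sigma(\Delta)}H^{p,q}_\VV(G;E_\lambda)$ under the finite-dimensionality hypothesis, and then observe that the $E_0$-cohomology sits inside as the $\lambda=0$ summand. Your explicit ``only if'' argument via applying $\mathcal{F}_{\lambda_0}$ to $v-w=\dd'u$ is a slightly more hands-on rendering of the paper's remark that the canonical injection of a factor into a finite direct sum is onto if and only if the remaining factors vanish, but it is the same mechanism.
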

From it we derive the following interesting result.
\begin{Cor} \label{cor:ell_ss} If $\VV$ is elliptic and $\gr{v} \sset \C \gr{g}$, its underlying Lie algebra, is semisimple then every cohomology class in $H^{0,q}_\VV(G; \cinfty(G))$ is left-invariant, for every $q$.
  \begin{proof} To say that $\VV$ is elliptic means that $\VV_x + \bar{\VV}_x = \C T_x G$ for every $x \in G$, or, equivalently, that $\gr{v} + \bar{\gr{v}} = \C \gr{g}$. One can prove that in that case the $\dd'$ differential complex is an elliptic complex, and it is well-known that $H^{p,q}_\VV(G; \cinfty(G))$ is then finite dimensional for every bidegree $(p,q)$. This fact does not depend on the group structure of $G$ and relies solely on the compactness of $G$ as a smooth manifold, and to prove it one can, for instance, either adapt the proof of a classical result in Complex Analysis~\cite[Theorem~VIII,~A19]{gr_afscv} -- where the local solvability of $\dd'$ (by ellipticity cf.~\cite[Chapter~VI]{treves_has}) plays a role similar to the existence of Stein neighborhoods of each point in the complex case --, or write down a Hodge-like theory for the $\dd'$ complex.

    As for the vanishing of $H^{0,q}_\VV(G; E_\lambda)$ for $\lambda \neq 0$ it will be proved below (Lemma~\ref{lem:ell_ss}). The conclusion then follows from Theorem~\ref{thm:main2}.
  \end{proof}
\end{Cor}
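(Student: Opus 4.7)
The corollary reduces, via Theorem~\ref{thm:main2}, to two points: (i) the cohomology $H^{p,q}_\VV(G;\cinfty(G))$ is finite-dimensional in every bidegree, and (ii) $H^{0,q}_\VV(G; E_\lambda) = 0$ for every $\lambda \in \sigma(\Delta) \setminus 0$ and every $q$. Part~(i) is standard elliptic theory: the hypothesis $\gr{v} + \bar{\gr{v}} = \C\gr{g}$ makes $(\cinfty(G;\Lambda^{p,\bullet}),\dd')$ an elliptic differential complex over a compact manifold, and Hodge theory for its Laplacian yields finite-dimensional cohomology in every bidegree. The substance is therefore Lemma~\ref{lem:ell_ss}, which contains~(ii).

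The first step towards~(ii) would be to recognize the complex $(\mathscr{E}_\lambda^{0,\bullet},\dd')$ as the Chevalley--Eilenberg complex of the Lie algebra $\gr{v}$ with coefficients in the finite-dimensional $\gr{v}$-module $E_\lambda$ (on which $\gr{v}$ acts by left-invariant vector fields, each $\vv{L}_j$ preserving $E_\lambda$ because it commutes with $\Delta$). Indeed, under the identification $\mathscr{E}_\lambda^{0,q} \cong E_\lambda \otimes \wedge^q \gr{v}^*$ given by the basis $\{\tau_J\}$, the two pieces of formula~\eqref{eq:isom_dprime3} correspond precisely to the two pieces of the Chevalley--Eilenberg differential: the first records the action of $\gr{v}$ on $E_\lambda$, while the second reproduces the Lie bracket contribution via the Maurer--Cartan identity $\dd'\tau_j(\vv{L}_a,\vv{L}_b) = -\tau_j([\vv{L}_a,\vv{L}_b])$ (and the fact that $\gr{v}$ is a subalgebra). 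Hence $H^{0,q}_\VV(G; E_\lambda) \cong H^q(\gr{v}; E_\lambda)$.

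Semisimplicity of $\gr{v}$ then enters through a standard Casimir argument. Weyl complete reducibility decomposes $E_\lambda$ into $\gr{v}$-irreducibles, and the quadratic Casimir $C_{\gr{v}} \in U(\gr{v})$ acts by a nonzero scalar on each nontrivial summand while acting as zero on all of $H^\bullet(\gr{v}; \cdot)$ (by a chain-homotopy calculation on the Chevalley--Eilenberg complex). This yields the Whitehead-type isomorphism $H^q(\gr{v}; E_\lambda) \cong H^q(\gr{v}; \C) \otimes E_\lambda^{\gr{v}}$ and reduces~(ii) to the single assertion $E_\lambda^{\gr{v}} = 0$ whenever $\lambda \neq 0$.

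The main obstacle---and the only place where both hypotheses on $\VV$ really combine---is this last vanishing. Let $\phi \in E_\lambda^{\gr{v}}$, so $\vv{L}\phi = 0$ for every $\vv{L} \in \gr{v}$, and pick an orthonormal basis $X_1, \ldots, X_N$ of $\gr{g}$ with respect to the $\ad$-invariant metric, so that $\Delta = -\sum_i X_i^2$. By ellipticity each $X_i$ decomposes as $X_i = V_i + \bar{W}_i$ with $V_i, W_i \in \gr{v}$; since $V_i \phi = 0$ one gets $X_i \phi = \bar{W}_i \phi$, and writing $[V_i, \bar{W}_i] = Y_i + \bar{Z}_i$ with $Y_i, Z_i \in \gr{v}$ (again by ellipticity) and using $Y_i \phi = 0$,
\begin{align*}
  X_i^2 \phi &= \bar{W}_i^2 \phi + [V_i, \bar{W}_i] \phi = \bar{W}_i^2 \phi + \bar{Z}_i \phi.
\end{align*}
Summing and applying $\Delta \phi = \lambda \phi$ gives $-\lambda \phi = \sum_i (\bar{W}_i^2 + \bar{Z}_i) \phi$. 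Taking the $L^2(G)$ pairing with $\phi$ and integrating by parts, each term on the right vanishes because $\bar{W}_i \bar{\phi} = \overline{W_i \phi} = 0$ and $\bar{Z}_i \bar{\phi} = \overline{Z_i \phi} = 0$. Hence $-\lambda \|\phi\|_{L^2(G)}^2 = 0$, forcing $\phi = 0$ when $\lambda \neq 0$, which is the desired conclusion.
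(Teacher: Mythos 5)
Your proof is correct, and its overall architecture matches the paper's: reduce via Theorem~\ref{thm:main2} to finite-dimensionality of $H^{p,q}_\VV(G;\cinfty(G))$ (supplied by ellipticity of the $\dd'$ complex) plus the vanishing of $H^{0,q}_\VV(G;E_\lambda)$ for $\lambda\neq 0$; identify the latter with the Lie algebra cohomology $H^q(\gr{v};E_\lambda)$; and use Whitehead-type vanishing to reduce everything to $E_\lambda^{\gr{v}}=0$. There are two genuine divergences. First, you identify $(\mathscr{E}_\lambda^{0,\bullet},\dd')$ with the Chevalley--Eilenberg complex of $\gr{v}$ directly from~\eqref{eq:isom_dprime3}, whereas the paper routes through relative Lie algebra cohomology and the Hochschild--Serre isomorphism~\eqref{eq:hs_isom}; for $p=0$ your shortcut is perfectly adequate, the paper's machinery simply treats all bidegrees uniformly. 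Second, and more substantively, the key vanishing $E_\lambda^{\gr{v}}=0$ for $\lambda\neq 0$ (the content of Lemma~\ref{lem:ell_ss}): the paper observes that $\phi\in E_\lambda^{\gr{v}}$ is a globally defined null solution of $\VV$, hence constant by hypocomplexity of elliptic structures on a compact connected manifold, hence in $E_0$. You instead give a self-contained $L^2$ computation: writing $\Delta=-\sum_i X_i^2$ for an orthonormal basis of $\gr{g}$ (valid precisely because the metric is $\ad$-invariant, so $\nabla_{X_i}X_i=\tfrac12[X_i,X_i]=0$), splitting each $X_i$ and each bracket $[V_i,\bar W_i]$ via $\gr{v}+\bar{\gr{v}}=\C\gr{g}$, and integrating by parts against the bi-invariant volume (left-invariant fields are divergence-free, so the formal adjoint of $W$ is $-\bar W$). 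The computation checks out: both remaining terms vanish because $W_i\phi=Z_i\phi=0$, forcing $\lambda\|\phi\|_{L^2(G)}^2=0$. Your route avoids the hypocomplexity/maximum-principle input entirely, at the cost of exploiting the specific algebraic form of $\Delta$; the paper's argument is softer and makes no reference to the choice of metric beyond what is needed to define $E_\lambda$.
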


\begin{Rem} Our intention with Corollary~\ref{cor:ell_ss} is simply to provide a small (potential) application of our Theorem~\ref{thm:main2}. We stress that we do not discuss in the present work the existence of elliptic and semisimple subalgebras of $\C \gr{g}$ on a compact Lie group. 
\end{Rem}

We start with the actual definitions. Let $u \in \cinfty(G; \Lambda^{p,q})$ be as in~\eqref{eq:urepglobal}. We will say that $u$ is \emph{left-invariant} if it is left-invariant regarded as a $(p + q)$-form on $G$. Notice that for each $x \in G$ we have
\begin{align*}
  (L_x)^* u = \psum_{|I| = p} \psum_{|J| = q} (u_{IJ} \circ L_x) \ (L_x)^* (\zeta_I \wedge \tau_J) = \psum_{|I| = p} \psum_{|J| = q} (u_{IJ} \circ L_x) \ \zeta_I \wedge \tau_J
\end{align*}
from which we conclude, comparing this expression with~\eqref{eq:urepglobal}, that $u$ is left-invariant if and only if
\begin{align*}
  u_{IJ} \circ L_x &= u_{IJ}, \quad \forall x \in G, \ \forall I,J,
\end{align*}
i.e.~each coefficient $u_{IJ}$ is a constant function. In particular, it is clear that this notion of left-invariance does not depend on the choice of basis~\eqref{eq:canonical_frame}, since any two such bases of $\C \gr{g}$ differ by a linear automorphism of $\C \gr{g}$ that preserves $\gr{v}$. Moreover, when $G$ is compact and connected we have that $E_0$ is precisely the space of constant functions on $G$, hence the space of all left-invariant elements of $\cinfty(G; \Lambda^{p,q})$ equals $\mathscr{E}^{p,q}_0$ and the cohomology space of such objects is $H^{p,q}_\VV(G; E_0)$. The inclusion map $\mathscr{E}^{p,q}_0 \hookrightarrow \cinfty(G; \Lambda^{p,q})$ induces a natural map
\begin{align}
  H^{p,q}_\VV(G; E_0) &\longrightarrow H^{p,q}_\VV(G; \cinfty(G)). \label{eq:map_li} 
\end{align}
Left-invariance of every cohomology class in $H^{p,q}_\VV(G; \cinfty(G))$ is equivalent to \emph{surjectivity} of~\eqref{eq:map_li}.
\begin{proof}[Proof of Theorem~\ref{thm:main2}] Thanks to our previous digression we must discuss the surjectivity of~\eqref{eq:map_li}, which we do in the general setup of systems of $\Delta$FDOs just like in Section~\ref{sec:fdos}. Let again $P = (P_{ij})_{n \times m}$ and $Q = (Q_{jk})_{m \times r}$ be matrices of $\Delta$FDOs forming a differential complex~\eqref{eq:complexPQ} such that $\mathscr{V} \dfn \cinfty(\Delta) \cong \cinfty(\Omega)$ is ``invariant'' by this system, so~\eqref{eq:complexPQ_V} holds. Then the inclusion maps $E_0^m \hookrightarrow \Sigma(\Delta)^m \hookrightarrow \cinfty(\Delta)^m$ induce natural maps
  \begin{align*}
    \mathcal{H}_{P,Q}(E_0) \longrightarrow \mathcal{H}_{P,Q}(\Sigma(\Delta)) \longrightarrow \mathcal{H}_{P,Q}(\cinfty(\Delta))
  \end{align*}
  and we claim that if $\dim \mathcal{H}_{P,Q}(\cinfty(\Delta)) < \infty$ then their composition is onto if and only if $\mathcal{H}_{P,Q}(E_\lambda) = 0$ for every $\lambda \neq 0$. Indeed, finite dimensionality of $\mathcal{H}_{P,Q}(\cinfty(\Delta))$ implies by Theorem~\ref{thm:main} and Corollary~\ref{cor:isoms_abstract} that the second map $\mathcal{H}_{P,Q}(\Sigma(\Delta)) \rightarrow \mathcal{H}_{P,Q}(\cinfty(\Delta))$ is an isomorphism, hence surjectivity of the composition above is equivalent to surjectivity of the first map $\mathcal{H}_{P,Q}(E_0) \rightarrow \mathcal{H}_{P,Q}(\Sigma(\Delta))$. But by Lemma~\ref{lem:algebraic_isoms_delta} we have a natural isomorphism
  \begin{align*}
    \mathcal{H}_{P,Q}(\Sigma(\Delta)) &\cong \bigoplus_{\sigma(\Delta)} \mathcal{H}_{P,Q}(E_\lambda)
  \end{align*}
  (now with both sides finite dimensional) and inspection on its proof shows that the first map $\mathcal{H}_{P,Q}(E_0) \rightarrow \mathcal{H}_{P,Q}(\Sigma(\Delta))$ can be then regarded as the canonical injection of a factor into a finite direct sum, and this is obviously onto if and only if all the remaining factors (i.e.~$\mathcal{H}_{P,Q}(E_\lambda)$ with $\lambda \neq 0$) are zero.
\end{proof}

Now, in order to finish the proof of Corollary~\ref{cor:ell_ss} we prove the following:
\begin{Lem} \label{lem:ell_ss} If $\VV$ is elliptic and $\gr{v}$ is semisimple then $H^{0,q}_\VV (G; E_\lambda) = 0$ for every $\lambda \in \sigma(\Delta) \setminus 0$.
\end{Lem}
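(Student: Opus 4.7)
The plan is to reduce the claim to a purely Lie-algebraic statement and then combine ellipticity with semisimplicity. Specifically, I will (i) identify $H^{0,q}_\VV(G; E_\lambda)$ with the Chevalley--Eilenberg cohomology $H^q(\gr{v}; E_\lambda)$ of $\gr{v}$ with coefficients in the finite-dimensional $\gr{v}$-module $E_\lambda$ (well-defined because every left-invariant vector field commutes with $\Delta$ and hence preserves $E_\lambda$); (ii) use ellipticity to show $E_\lambda^{\gr{v}} = 0$ for $\lambda \neq 0$; and (iii) invoke semisimplicity plus a Casimir argument to kill all higher cohomology.

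Step (i) is essentially bookkeeping. Via the global frame $\{\tau_J \st |J| = q\}$ the space $\mathscr{E}_\lambda^{0,q}$ is isomorphic to $\Hom(\wedge^q \gr{v}, E_\lambda)$, and the formula~\eqref{eq:isom_dprime3} for $\dd'$ in bidegree $(0,q)$ matches the classical Chevalley--Eilenberg differential: the first sum encodes the action of the $\LL_j \in \gr{v}$ on coefficients in $E_\lambda$, while $\dd'(\tau_j)$---the class of $\dd \tau_j$ modulo $\T'^{1,1}$---retains only the pure-$\tau$ part, whose coefficients $-\tau_j([\LL_k, \LL_l])$ are exactly the structure constants of $\gr{v}$ (recall $[\gr{v}, \gr{v}] \subset \gr{v}$). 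Thus $H^{0,q}_\VV(G; E_\lambda) \cong H^q(\gr{v}; E_\lambda)$.

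Step (ii) is the main obstacle. By the Peter--Weyl decomposition (Remark~\ref{rem:rep_theory}) we have $E_\lambda = \bigoplus_{\lambda_\xi = \lambda} \mathcal{M}_\xi$, and as a $\gr{g}$-module under the left-invariant action $\mathcal{M}_\xi \cong V_\xi \otimes V_\xi^*$ with $\gr{g}$ acting only on the first factor; hence $\mathcal{M}_\xi^{\gr{v}} \cong V_\xi^{\gr{v}} \otimes V_\xi^*$. Since $E_0$ consists of constants on the connected group $G$, the condition $\lambda_\xi \neq 0$ forces $\xi$ to be non-trivial, so I only need $V_\xi^{\gr{v}} = 0$ for every non-trivial irreducible unitary $\xi$. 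For $v \in V_\xi^{\gr{v}}$ and $\LL \in \gr{v}$, unitarity yields $d\xi(\bar\LL)^* = -d\xi(\LL)$, so
\begin{align*}
  \| d\xi(\bar\LL) v \|^2 = -\langle v, d\xi(\LL) d\xi(\bar\LL) v \rangle = -\langle v, d\xi([\LL, \bar\LL]) v \rangle.
\end{align*}
By ellipticity $[\LL, \bar\LL] = \LL' + \bar\LL''$ with $\LL', \LL'' \in \gr{v}$; the contribution of $\LL'$ vanishes because $d\xi(\LL')v = 0$, and that of $\bar\LL''$ vanishes because $\langle v, d\xi(\bar\LL'') v \rangle = -\overline{\langle v, d\xi(\LL'') v \rangle} = 0$. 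Hence $d\xi(\bar\LL) v = 0$ for all $\LL \in \gr{v}$; so $v$ is annihilated by $\gr{v} + \bar\gr{v} = \C\gr{g}$, and irreducibility together with non-triviality of $V_\xi$ force $v = 0$.

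Step (iii) is then immediate. Semisimplicity of $\gr{v}$ and complete reducibility split $E_\lambda$ as a $\gr{v}$-module into finite-dimensional irreducibles, and by step (ii) none is the trivial module. The standard Casimir argument (Whitehead--Koszul) gives $H^q(\gr{v}; W) = 0$ for every $q \geq 0$ whenever $W$ is a non-trivial finite-dimensional irreducible over a semisimple complex Lie algebra, since the Casimir element acts as a non-zero scalar on $W$ but as zero on cohomology. Summing over the irreducible components yields $H^q(\gr{v}; E_\lambda) = 0$ for every $q \in \Z_+$, which is the claim. The delicate step is thus the ellipticity-plus-unitarity computation in step (ii); the Chevalley--Eilenberg identification is routine, and the semisimple reduction is classical once the invariants are known to vanish.
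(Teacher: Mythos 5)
Your proof is correct, and while steps (i) and (iii) coincide with the paper's (the paper carries out the identification $H^{0,q}_\VV(G;E_\lambda)\cong H^q(\gr{v};E_\lambda)$ via the isomorphisms~\eqref{eq:laiso_VV2} and~\eqref{eq:hs_isom} rather than by direct computation with the frame, and then invokes Whitehead's vanishing theorem exactly as you do), your step (ii) is genuinely different. The paper proves $E_\lambda^{\gr{v}}=0$ for $\lambda\neq 0$ by observing that an element of $E_\lambda^{\gr{v}}$ is a global null solution of $\VV$, and that ellipticity implies hypocomplexity, so that on the compact connected $G$ every global null solution is constant and hence lies in $E_0$; this imports a nontrivial piece of local hypo-analytic theory (Treves) but uses the group structure only through the $\Delta$-invariance of $E_\lambda$. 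Your argument instead decomposes $E_\lambda$ by Peter--Weyl, identifies $\mathcal{M}_\xi\cong V_\xi\otimes V_\xi^*$ with the left-invariant action on the first factor, and kills $V_\xi^{\gr{v}}$ for non-trivial $\xi$ by the unitarity--commutator computation $\|\dd\xi(\bar{\LL})v\|^2=-\langle v,\dd\xi([\LL,\bar{\LL}])v\rangle$ together with $\gr{v}+\bar{\gr{v}}=\C\gr{g}$; this is purely representation-theoretic and avoids hypocomplexity entirely, at the cost of leaning on the full group-theoretic Fourier analysis that the paper deliberately sidesteps (cf.~Remark~\ref{rem:rep_theory}). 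Both arguments are sound; the paper's generalizes more readily beyond the Lie group setting, while yours is self-contained at the algebraic level and makes the role of ellipticity ($\gr{v}+\bar{\gr{v}}=\C\gr{g}$) completely explicit.
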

Its proof uses some elementary tools from Lie algebra cohomology theory, to which we dedicate our final section.

\subsection{Lie algebra cohomology}

Let $\gr{g}$ be a real or complex Lie algebra. A \emph{$\gr{g}$-module} (or a \emph{representation of $\gr{g}$}) is a vector space $\mathscr{V}$ together with a homomorphism of Lie algebras $\gr{g} \rarr \End(\mathscr{V})$, the latter endowed with the usual commutator bracket of linear maps. Two important examples are the following: $\gr{g}$ itself is a $\gr{g}$-module via adjoint action; and given $\mathscr{V}$ and $\mathscr{W}$ two $\gr{g}$-modules, the space $C^r(\mathscr{V}; \mathscr{W})$ of all $r$-multilinear maps from $\mathscr{V}$ to $\mathscr{W}$ ($r \in \Z_+$) carries a natural $\gr{g}$-module structure: $C^0(\mathscr{V}; \mathscr{W}) = \mathscr{W}$, while for $r \geq 1$ one defines
\begin{align*}
  (\vv{X} \omega) (x_1, \ldots, x_r) &\dfn \vv{X} \left( \omega (x_1, \ldots, x_r) \right) + \sum_{j = 1}^r (-1)^j \omega(\vv{X} x_j, x_1, \ldots, \hat{x}_j, \ldots, x_r), \quad x_1, \ldots, x_r \in \mathscr{V}, 
\end{align*}
for $\omega \in C^r(\mathscr{V}; \mathscr{W})$ and $\vv{X} \in \gr{g}$.

For $\mathscr{V}$ a $\gr{g}$-module and $r \in \Z_+$ we define a homomorphism of $\gr{g}$-modules $\dd: C^r(\gr{g}; \mathscr{V}) \rarr C^{r + 1}(\gr{g}; \mathscr{V})$ in the following way: given $\omega \in C^r(\gr{g}; \mathscr{V})$ let
\begin{multline*}
  (\dd \omega)(\vv{X}_0, \ldots, \vv{X}_r) \dfn \\ \dfn \sum_{j = 0}^r (-1)^j \vv{X}_j \left( \omega(\vv{X}_0, \ldots, \hat{\vv{X}}_j, \ldots, \vv{X}_r) \right) + \sum_{j < k} (-1)^{j + k} \omega([\vv{X}_j, \vv{X}_k], \vv{X}_0, \ldots, \hat{\vv{X}}_j, \ldots, \hat{\vv{X}}_k, \ldots, \vv{X}_r)
\end{multline*}
where $\vv{X}_0, \ldots, \vv{X}_r \in \gr{g}$. Then $\dd^2 = 0$ i.e.~we have a differential complex of $\gr{g}$-modules and homomorphisms, called the \emph{Chevalley-Eilenberg complex}~\cite{ce48}, whose cohomology $\gr{g}$-modules we denote by $H^*(\gr{g}; \mathscr{V})$.

One can also define cohomology relative to a Lie subalgebra $\gr{h} \sset \gr{g}$. We define for $p,q \in \Z_+$
\begin{align*}
  \gr{N}^{p,q}_{\gr{h}}(\gr{g}; \mathscr{V}) &\dfn
  \begin{cases}
    C^q(\gr{g}; \mathscr{V}), &\text{if $p = 0$}, \\
    \{ \omega \in C^{p + q}(\gr{g}; \mathscr{V}) \st \text{$\omega(\vv{X}_1, \ldots, \vv{X}_{p + q}) = 0$ if $\vv{X}_1, \ldots, \vv{X}_{q + 1} \in \gr{h}$} \}, &\text{if $p \geq 1$},
  \end{cases}
\end{align*}
which, in either case, is a $\gr{h}$-submodule of $C^{p + q}(\gr{g}; \mathscr{V})$. The following relations hold:
\begin{enumerate}
\item $\gr{N}^{p + 1,q - 1}_{\gr{h}}(\gr{g}; \mathscr{V}) \sset \gr{N}^{p,q}_{\gr{h}}(\gr{g}; \mathscr{V})$ and
\item $\dd \gr{N}^{p,q}_{\gr{h}}(\gr{g}; \mathscr{V}) \sset \gr{N}^{p,q + 1}_{\gr{h}}(\gr{g}; \mathscr{V})$.
\end{enumerate}
Hence
\begin{align*}
  \gr{U}^{p,q}_{\gr{h}} (\gr{g}; \mathscr{V}) &\dfn \gr{N}^{p,q}_{\gr{h}}(\gr{g}; \mathscr{V}) / \gr{N}^{p + 1,q - 1}_{\gr{h}}(\gr{g}; \mathscr{V})
\end{align*}
is a $\gr{h}$-module and $\dd: \gr{N}^{p,q}_{\gr{h}}(\gr{g}; \mathscr{V}) \rarr \gr{N}^{p,q + 1}_{\gr{h}}(\gr{g}; \mathscr{V})$ (now regarded as a homomorphism of $\gr{h}$-modules) induces a homomorphism of $\gr{h}$-modules on the quotients $\dd': \gr{U}^{p,q}_{\gr{h}} (\gr{g}; \mathscr{V}) \rarr \gr{U}^{p,q + 1}_{\gr{h}} (\gr{g}; \mathscr{V})$ such that $(\dd')^2 = 0$ i.e.~for each fixed $p \in \Z_+$ we have a differential complex of $\gr{h}$-modules and homomorphisms, whose cohomology $\gr{h}$-modules we denote by $H^{p,*}_{\gr{h}} (\gr{g}; \mathscr{V})$. Easy but tedious computations~\cite[Theorem~2]{hs53} show that the map $\Phi: \gr{N}^{p,q}_{\gr{h}}(\gr{g}; \mathscr{V}) \rarr C^q(\gr{h}; C^p( \gr{g} / \gr{h}; \mathscr{V} ))$ defined by the formula
\begin{align*}
  (\Phi \omega)(\vv{X}_1, \ldots, \vv{X}_q)(\vv{Y}_1 + \gr{h}, \ldots, \vv{Y}_p + \gr{h}) &\dfn \omega(\vv{X}_1, \ldots, \vv{X}_q, \vv{Y}_1, \ldots, \vv{Y}_p)
\end{align*}
(where $\omega \in \gr{N}^{p,q}_{\gr{h}}(\gr{g}; \mathscr{V})$ and $\vv{X}_1, \ldots, \vv{X}_q \in \gr{h}$ and $\vv{Y}_1, \ldots, \vv{Y}_p \in \gr{g}$) is an isomorphism of $\gr{h}$-modules that descends to an isomorphism of $\gr{h}$-modules in cohomology i.e.
\begin{align}
  H^{p,q}_{\gr{h}} (\gr{g}; \mathscr{V}) &\cong H^q(\gr{h}; C^p( \gr{g} / \gr{h}; \mathscr{V} )), \quad \forall p,q \in \Z_+. \label{eq:hs_isom}
\end{align}

Back to our Lie group $G$, and again $\gr{g}$ its Lie algebra, an obvious $\C \gr{g}$-module is $\cinfty(G)$: vector fields act on smooth functions as differential operators. Since we know how to make sense of this action on other spaces of (generalized) functions -- always keeping in mind that a left-invariant vector field is automatically real-analytic -- we can work just as well with the space of hyperfunctions $\D'_1(G)$, as well as many of its $\C \gr{g}$-submodules. Moreover, since (as mentioned in the Introduction) left-invariant vector fields commute with $\Delta$ -- the Laplace-Beltrami operator associated with some $\ad$-invariant metric on $G$ -- we have that each one of its eigenspaces $E_\lambda$ is a $\C \gr{g}$-module. One can also make vector fields act on differential forms and currents as Lie derivatives, so the spaces of such objects are also $\C \gr{g}$-modules. 

Let $r \in \Z_+$. To each $f \in \cinfty(G; \wedge^r \C T^* G)$ we assign an element $\Psi f$ in $C^r(\C \gr{g}; \cinfty(G))$ defined by
\begin{align*}
  (\Psi f)(\vv{Y}_1, \ldots, \vv{Y}_r) &\dfn f(\vv{Y}_1, \ldots, \vv{Y}_r), \quad \vv{Y}_1, \ldots, \vv{Y}_r \in \C \gr{g}. 
\end{align*}
This yields a linear map $\Psi: \cinfty(G; \wedge^r \C T^* G) \rarr C^r(\C \gr{g}; \cinfty(G))$, which we claim to be a linear isomorphism: introducing bases
\begin{align}
  \vv{X}_1, \ldots, \vv{X}_N \in \C \gr{g}, & \quad \chi_1, \ldots, \chi_N \in \C \gr{g}^* \label{eq:general_bases}
\end{align}
dual to each other, every $f \in \cinfty(G; \wedge^r \C T^* G)$ can be written in a unique fashion as
\begin{align}
  f &= \psum_{|I| = r} f_I \ \chi_I \label{eq:rep_form_coeff}
\end{align}
where, for each ordered multi-index $I = (i_1, \ldots, i_r)$: $f_I \dfn f(\vv{X}_{i_1}, \ldots, \vv{X}_{i_r})$ and $\chi_I \dfn \chi_{i_1} \wedge \cdots \wedge \chi_{i_r}$. If $\Psi f = 0$ then clearly $f_I = 0$ for every $I$, hence $f = 0$ i.e.~$\Psi$ is injective. On the other hand, given $F \in C^r(\C \gr{g}; \cinfty(G))$ define $f_I \dfn F(\vv{X}_{i_1}, \ldots, \vv{X}_{i_r}) \in \cinfty(G)$ for each ordered multi-index $I$: we then assemble back $f \in \cinfty(G; \wedge^r \C T^* G)$ taking~\eqref{eq:rep_form_coeff} as its definition, and since for every ordered multi-index $I$ we have $(\Psi f)(\vv{X}_{i_1}, \ldots, \vv{X}_{i_r}) = f_I = F(\vv{X}_{i_1}, \ldots, \vv{X}_{i_r})$ and, moreover, $\vv{X}_1, \ldots, \vv{X}_N$ is a basis of $\C \gr{g}$ and both $F$ and $\Psi f$ are multilinear, this implies that $\Psi f = F$, hence $\Psi$ is onto. One also checks by hand that $\Psi$ is natural w.r.t.~differentials (exterior derivative on one side and the Chevalley-Eilenberg differential on the other) and induces isomorphism in cohomology:
\begin{align}
  H^r_{\mathrm{dR}} (G; \cinfty(G)) &\cong H^r(\C \gr{g}; \cinfty(G)). \label{eq:laiso_dR}
\end{align}

Similarly relative to a subalgebra $\gr{v} \sset \C \gr{g}$, for $r = p + q$  one checks that $\Psi$ maps $\cinfty(G; \T'^{p,q})$ onto $\gr{N}^{p,q}_{\gr{v}}(\C \gr{g}; \cinfty(G))$ (now it is best to express everything in terms of~\eqref{eq:canonical_frame} and its dual basis, which is a particular choice of~\eqref{eq:general_bases}) and descends to quotients as an isomorphism
\begin{align*}
  \Psi_*: \cinfty(G; \Lambda^{p,q}) \longrightarrow \gr{U}^{p,q}_{\gr{v}}(\C \gr{g}; \cinfty(G))
\end{align*}
which again commutes with differentials (now the $\dd'$ associated to $\VV$ and the relative Chevalley-Eilenberg $\dd'$ associated to the subalgebra $\gr{v}$) and induces isomorphism in cohomology
\begin{align}
  H^{p,q}_{\VV} (G; \cinfty(G)) &\cong H^{p,q}_{\gr{v}} (\C \gr{g}; \cinfty(G)) \label{eq:laiso_VV}
\end{align}
thus generalizing our previous ``absolute'' isomorphism~\eqref{eq:laiso_dR}, which is a special case of~\eqref{eq:laiso_VV} with $\gr{v} \dfn \C \gr{g}$. Of course $\cinfty(G)$ plays no distinguished role here, and the same argument applies to any $\C \gr{g}$-submodule $\mathscr{V} \sset \D'_1(G)$ yielding
\begin{align}
  H^{p,q}_{\VV} (G; \mathscr{V}) &\cong H^{p,q}_{\gr{v}} (\C \gr{g}; \mathscr{V}). \label{eq:laiso_VV2}
\end{align}
The relevant $\C \gr{g}$-modules $\mathscr{V}$ in our previous and forthcoming arguments are $\cinfty(G)$, $\D'(G)$, $\gev^s(G)$, $\D'_s(G)$ (for $s \geq 1$) and $E_\lambda$ (for $\lambda \in \sigma(\Delta)$).
\begin{proof}[Proof of Lemma~\ref{lem:ell_ss}] Let $\lambda \in \sigma(\Delta)$. Composing~\eqref{eq:laiso_VV2} with~\eqref{eq:hs_isom} yields
  \begin{align*}
    H^{0,q}_{\VV} (G; E_\lambda) \cong H^{0,q}_{\gr{v}} (\C \gr{g}; E_\lambda) \cong H^q(\gr{v}; C^0(\C \gr{g} / \gr{v}; E_\lambda)) = H^q(\gr{v}; E_\lambda).
  \end{align*}
  Since $\gr{v}$ is by hypothesis semisimple and $E_\lambda$ is a finite dimensional $\gr{v}$-module, it follows from Whitehead's Vanishing Theorem (see e.g.~\cite[Theorem~5.7.33]{hn_sglg}) that $H^q(\gr{v}; E_\lambda) = 0$ (for all $q$) provided
  \begin{align*}
    E_\lambda^{\gr{v}} &\dfn \{ \phi \in E_\lambda \st \LL \phi = 0, \ \forall \LL \in \gr{v} \}
  \end{align*}
  is zero. However, if $\phi \in E_\lambda$ is annihilated by every $\LL \in \gr{v}$ then it is also annihilated by every local section of $\VV$ (recall that $\gr{v}$ contains a global frame $\LL_1, \ldots, \LL_n$ for $\VV$) -- that is, $\phi$ is a null solution of $\VV$. But since $\VV$ is elliptic (hence hypocomplex, see~\cite[Section~III.5]{treves_has}) and $G$ is compact and connected, any globally defined null solution of $\VV$ is constant, and thus $\phi \in E_0$. But if $\lambda$ is nonzero then $E_\lambda \cap E_0 = 0$.
\end{proof}

\def\cprime{$'$}

\end{document}